\newtheorem{lemma}{Lemma}
\newtheorem{definition}{Definition}
\newtheorem{theorem}{Theorem}
\newtheorem{proposition}{Proposition}
\newtheorem{corollary}{Corollary}
\newtheorem{example}{Example}
\newtheorem{remark}{Remark}
\newtheorem{conjecture}{Conjecture}
\def\mmset{{\mathcal B}}
\def\mmseto{\overline{\mmset}}
\def\cD{{\mathcal D}}
\def\R{\mathbb{R}}
\def\Tmax{T_{\max}}
\def\bez{\backslash}
\def\podx{\underline{x}}
\def\nadx{\overline{x}}
\def\conv{\operatorname{conv}_{\oplus}}
\def\convo{\overline{\conv}}
\def\cB{\mmset}
\def\Lin{\operatorname{Lin}}
\def\supp{\operatorname{supp}}
\def\rank{\operatorname{rank}}
\def\inter{\operatorname{int}}
\numberwithin{equation}{section}
\begin{document}

\title{Tropical convexity over max-min semiring}

\author{Viorel Nitica}
\address{Department of Mathematics, West Chester University, PA
19383, USA, and Institute of Mathematics, P.O. Box 1-764, Bucharest, Romania}
\email{vnitica@wcupa.edu}

\author{Serge\u{\i} Sergeev}
\address{University of Birmingham,
School of Mathematics, Watson Building, Edgbaston B15 2TT, UK}
\email{sergiej@gmail.com}

\thanks{Viorel Nitica was partially supported by a grant from Simons Foundation 208729. Serge\u{\i} Sergeev
is supported by EPSRC grant RRAH15735, RFBR-CNRS grant 11-0193106
and RFBR grant 12-01-00886.} \subjclass[2000]{Primary: 52A01;
Secondary: 52A30, 08A72, 15A80} \keywords{fuzzy algebra; max-min
algebra; max-min hemispaces; max-min convexity; Caratheodory, Helly,
Radon theorems; max-min dimension; max-min rank of a matrix}
\date{}
\maketitle

\begin{abstract} This is a survey on
an analogue of tropical convexity developed over the max-min
semiring, starting with the descriptions of max-min segments,
semispaces, hyperplanes and an account of separation and non-separation results
based on semispaces. There are some new results. %We
%introduce and study a max-min analogue of tropical rank, from a
%geometric point of view.
In particular, we give new ``colorful'' extensions of the max-min
Carath\'eodory theorem. In the end of the paper, we list some
consequences of the topological Radon and Tverberg theorems (like
Helly and Centerpoint theorems), valid over a more general class of
max-T semirings, where multiplication is a triangular norm.
\end{abstract}

\section{Introduction\label{sec1}}

The max-min semiring is defined as the unit interval $\mmset=[0,1]$ with the operations
$a\oplus b:=\max(a,b)$, as addition, and $a\otimes b:=\min(a,b)$, as multiplication. The operations are idempotent,
$\max(a,a)=a=\min(a,a)$, and related to the
order:
\begin{equation}\label{first-eq-662}
\max(a,b)=b\Leftrightarrow a\leq b\Leftrightarrow \min(a,b)=a.
\end{equation}
One can naturally extend them to matrices and vectors
leading to the max-min (fuzzy) linear algebra
\cite{BCS-87,Gav-01,Gav:04}. We denote by $\mmset(d,m)$ the set
of $d\times m$ matrices with entries in $\mmset$ and by $\mmset^d$
the set of $d$-dimensional vectors with entries in $\mmset$. Both
$\mmset(d,m)$ and $\mmset^d$ have a natural structure of semimodule
over the semiring $\mmset$.

The {\bf max-min segment} between $x,y\in\mmset^d$ is defined as
\begin{equation}
\label{segm0}
\begin{aligned}
\ [x,y]_{\oplus} &= \{\alpha\otimes x\oplus \beta\otimes y\mid \,\alpha \oplus \beta =1, \alpha,\beta\in\mmset\}.\
%[x,y]_{\oplus}& =\{z\mid z_i=\max(\min (\alpha ,x_i),\min (\beta ,y_i))\ \forall i,\ \max \,(\alpha ,\beta )=1 \}.
\end{aligned}
\end{equation}

A set $C\subseteq\mmset^d$ is called {\bf max-min convex}, if it contains, with any two points
$x,y,$ the segment $[x,y]_{\oplus}$ between them. For a general subset $X\subseteq\mmset^d$,
define its {\bf convex hull} $\conv(X)$ as the smallest max-min convex set containing $X$, i.e.,
the smallest set containing $X$ and stable under taking segments~\eqref{segm0}.
As in the ordinary convexity, $\conv(X)$ is the set of
all {\em max-min convex combinations}
\begin{equation}
\label{convX}
\bigoplus_{i=1}^m \lambda_i\otimes x^i\colon m\geq 1,\ \bigoplus_{i=1}^m \lambda_i=1,
\end{equation}
of all $m$-tuples of elements $x^1,\ldots,x^m\in X$. The max-min convex hull
of a finite set of points is also called a {\em max-min convex polytope}.

A {\bf (max-min) semispace} at $x\in\mmset^d$ is defined as a maximal max-min convex set not containing $x$.
A straightforward application of Zorn's Lemma shows that if $C\subseteq \mmset^d$ is convex and $x\notin C$, then
$x$ can be separated from $C$ by a semispace. It follows that the semispaces constitute the
smallest intersectional basis of max-min convex sets. This fact is true more generally in abstract convexity.
Some new phenomena appear in max-min convexity, which further emphasize the importance of semispaces
in any convexity theory. For example, separation of a point and a convex set by hyperplanes is not always possible
in max-min convexity \cite{Nit-09}, \cite{N-Ser1}.

The max-min segments and semispaces were described, respectively,
in~\cite{NS-08I,Ser-03} and in~\cite{NS-08II}. In the present
paper, the max-min segments are introduced in
Section~\ref{s:segments}. We recall the structure of max-min
semispaces in Section~\ref{s:semispaces} together with some
immediate consequences from abstract convexity.
In~\cite{N-Ser1,N-Ser2} further progress is made in the study of
max-min convexity focusing on the role of semispaces. Being
motivated by the Hahn-Banach separation theorems in the tropical
(max-plus) convexity~\cite{Zim-77} and extensions to functional and
abstract idempotent semimodules~\cite{CGQS-05, LMS-01, Zim-81}, we
compared semispaces to max-min hyperplanes in~\cite{N-Ser1}, and
developed an interval extension of separation by semispaces
in~\cite{N-Ser2}. These results are summarized in
Section~\ref{s:separation}.
%A notion of dimension in max-min
%convexity is introduced and studied in Section~\ref{s:dim},
%following the analogy with tropical rank~\cite{develin-etc}. We show
%that as in the case of tropical convexity, our max-min analogue of
%tropical rank is closely related to unique solvability of max-min
%linear systems $A\otimes x=b$ studied, e.g.,
%in~\cite{BCS-87,Gav-01}.
Another principal goal of this paper is to investigate classical
convexity results such as the theorems of Carath\'eodory, Helly and
Radon in the realm of max-min convexity. These results are presented
in Sections~\ref{s:carath},~\ref{s:intsep} and~\ref{s:radon-helly}
and are inspired by a paper of Gaubert and Meunier~\cite{G-Meu}, in
which similar statements can be found for the case of max-plus
convexity. The max-min Carath\'{e}odory theorem with some
``colorful'' extensions is presented in Section~\ref{s:carath}. The
strongest extension relies on what we call the internal separation
theorem, which is proved in Section~\ref{s:intsep}. In the last
section, motivated by the fuzzy algebra of~\cite{pap}, we consider a
more general class of max-T semirings, where the role of
multiplication is played by a triangular norm. We show how the
topological Radon and Tverberg theorems can be applied to obtain, in
particular, the max-min analogues of Radon, Helly, Centerpoint and
(in part) Tverberg theorems.

\section{Description of segments}\label{s:segments}

In this section we describe general segments in $\mmset^{d},$
following~\cite{NS-08I,Ser-03}, where complete proofs can be found.
% Complete proofs of our statements can be found in \cite{NS-08I}.
Note that the description of the segments in \cite{NS-08I, Ser-03}
is done for the equivalent case where $\mmset=[-\infty, +\infty]$.

Let $x=(x_{1},...,x_{d}),$ $y=(y_{1},...,y_{d})\in\mmset^{d},$
and assume that we are in the \emph{case of comparable endpoints},
say $x\leq y$ in the natural
order of $\mmset^{d}.$
%Without loss of generality we assume $x_{1}\leq x_{2}\leq...\leq x_{d}$. In any
Sorting the set of all coordinates $\{x_{i},y_{i},i=1,...,d\}$
we obtain a non-decreasing sequence, denoted by $t_1,t_2,\ldots, t_{2d}$.
This sequence divides the set $\mmset$ into
$2d+1$ subintervals
$\sigma_0=[0,t_{1}],\,\sigma_1=[t_1,t_2],...,\sigma_{2d}=[t_{2d},1]$,
with consecutive subintervals having one common endpoint.

Every point $z\in [x,y]_{\oplus}$ is represented as
$z=\alpha\otimes x\oplus\beta\otimes y$, where $\alpha=1$ or $\beta=1$.
However, case $\beta=1$ yields only $z=y$, so we can assume $\alpha=1$.
Thus $z$ can be regarded as a function of one parameter $\beta$, that is,
$z(\beta)=(z_{1}(\beta),...,z_{d}(\beta))$ with $\beta\in\mmset$.
Observe that for $\beta\in\sigma_0$ we have $z(\beta)=x$ and for
$\beta\in\sigma_{2d}$ we have $z(\beta)=y$. Vectors $z(\beta)$ with
$\beta$ in any other subinterval form a conventional {\em elementary segment}.
Let us proceed with a formal account of all this.

\begin{theorem}
\label{tcomp}
Let $x,y\in\mmset^d$ and $x\leq y$.
\begin{itemize}
\item[(i)] We have
\begin{equation}
\label{e:chain}
[x,y]_{\oplus}=\bigcup_{l=1}^{2d-1} \{z(\beta)\mid\beta\in\sigma_l\},
\end{equation}
where $z(\beta)=x\oplus(\beta\otimes y)$ and $\sigma_{\ell}=[t_l,t_{l+1}]$ for
$\ell=1,\ldots,2d-1$, and $t_1,\ldots,t_{2d}$ is the nondecreasing sequence
whose elements are the coordinates $x_i,y_i$ for $i=1,\ldots,d$.
\item[(ii)] For each $\beta\in\mmset$ and $i$, let
$M(\beta)=\{i\colon x_i\leq\beta\leq y_i\}$, $H(\beta)=\{i\mid\beta\geq y_i\}$
and $L(\beta)=\{i\colon\beta\leq x_i\}$. Then
\begin{equation}
\label{zibeta}
z_i(\beta)=
\begin{cases}
\beta, &\text{if $i\in M(\beta)$},\\
x_i, &\text{if $i\in L(\beta)$},\\
y_i, &\text{if $i\in H(\beta)$},
\end{cases}
\end{equation}
and $M(\beta),L(\beta), H(\beta)$ do not change in the interior
of each interval $\sigma_{\ell}$.
\item[(iii)]  The sets $\{z(\beta)\mid\beta\in\sigma_{\ell}\}$ in~\eqref{e:chain}
are conventional closed segments in $\mmset^d$ (possibly reduced to a point),
described by~\eqref{zibeta} where $\beta\in\sigma_{\ell}$.
\end{itemize}
\end{theorem}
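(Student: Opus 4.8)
The plan is to work entirely with the single-parameter reduction already set up before the statement. Since $\alpha\oplus\beta=1$ forces $\alpha=1$ or $\beta=1$, and the case $\beta=1$ only reproduces $y=z(1)$, every point of $[x,y]_{\oplus}$ equals $z(\beta)=x\oplus(\beta\otimes y)$ for some $\beta\in\mmset$, with the coordinatewise formula $z_i(\beta)=\max(x_i,\min(\beta,y_i))$. I would prove (ii) first, then read off (iii), and finally assemble (i) from the two.

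For (ii), I would simply evaluate $\max(x_i,\min(\beta,y_i))$ in each of the three regimes, using $x_i\leq y_i$ throughout: if $x_i\leq\beta\leq y_i$ then $\min(\beta,y_i)=\beta\geq x_i$, giving $z_i=\beta$; if $\beta\leq x_i\leq y_i$ then $\min(\beta,y_i)=\beta\leq x_i$, giving $z_i=x_i$; if $\beta\geq y_i\geq x_i$ then $\min(\beta,y_i)=y_i\geq x_i$, giving $z_i=y_i$. This is the content of~\eqref{zibeta}. The only genuinely combinatorial point is the constancy of the index sets: the defining inequalities $x_i\leq\beta$ and $\beta\leq y_i$ can switch truth value only as $\beta$ crosses one of the numbers $x_i$ or $y_i$, that is, one of the $t_k$. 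Hence on the open interval $\inter\sigma_{\ell}=(t_\ell,t_{\ell+1})$, which by construction contains none of the $t_k$, each index falls into exactly one of $M(\beta),L(\beta),H(\beta)$ and this assignment does not depend on $\beta$.

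For (iii), I would fix an interval $\sigma_{\ell}$ and observe that, with $M,L,H$ constant on its interior, the map $\beta\mapsto z(\beta)$ is affine: the coordinates indexed by $L$ and $H$ are the fixed numbers $x_i$ and $y_i$, while every coordinate indexed by $M$ equals $\beta$. An affine image of the closed interval $[t_\ell,t_{\ell+1}]\subseteq\mmset$ in $\mmset^d$ is a conventional closed segment, degenerating to a point exactly when $M=\emptyset$; continuity of $z$ lets me extend the description from the interior, where $M,L,H$ are constant, to the closed block.

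Finally, for (i), I would note that $t_1=\min_i x_i$ and $t_{2d}=\max_i y_i$, since $x_i\leq y_i$ for all $i$. Consequently, on $\sigma_0=[0,t_1]$ one has $\beta\leq t_1\leq x_i$ for every $i$, so every index lies in $L(\beta)$ and $z(\beta)=x$; symmetrically, on $\sigma_{2d}=[t_{2d},1]$ every index lies in $H(\beta)$ and $z(\beta)=y$. Because $x=z(t_1)$ and $y=z(t_{2d})$ already reappear as the boundary vectors of the blocks $\sigma_1$ and $\sigma_{2d-1}$, the two extreme blocks contribute no new points, and the union may be restricted to $\ell=1,\ldots,2d-1$, which is exactly~\eqref{e:chain}. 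I expect the main obstacle to be bookkeeping rather than anything deep: keeping the roles of $x_i$ and $y_i$ straight in the tie/boundary cases where $\beta$ coincides with some $t_k$, and checking that the trimmed endpoints are genuinely recovered inside the retained blocks.
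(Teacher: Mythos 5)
Your proof is correct and follows essentially the same route the paper takes: the paper's pre-theorem discussion (and the cited sources \cite{NS-08I,Ser-03}, to which the complete proof is deferred) performs exactly this reduction to the one-parameter family $z(\beta)=x\oplus(\beta\otimes y)$, the coordinatewise case analysis of $\max(x_i,\min(\beta,y_i))$, and the observation that $\sigma_0$ and $\sigma_{2d}$ contribute only $x$ and $y$. Your handling of the tie cases at the $t_k$ and the trimming of the extreme blocks is sound.
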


For \emph{incomparable endpoints} $x\not\leq y,\,y\not\leq x,$
the description can be reduced to that of segments with comparable endpoints,
by means of the following observation.

\begin{theorem}
\label{tincomp}Let $x,y\in
\mmset^d$. Then $[x,y]_{\oplus}$ is the
concatenation of two segments with comparable endpoints, namely $\lbrack x,y]_{\oplus}=[x,x\oplus y]_{\oplus}\cup [x\oplus y,y]_{\oplus}.$
\end{theorem}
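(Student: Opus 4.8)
The plan is to prove the set equality directly, by computing an explicit one-parameter description of each of the two segments on the right-hand side and then matching these against the defining parametrization of $[x,y]_{\oplus}$. First I would record that $x\leq x\oplus y$ and $y\leq x\oplus y$ coordinatewise, so that both $[x,x\oplus y]_{\oplus}$ and $[x\oplus y,y]_{\oplus}$ genuinely have comparable endpoints; this is exactly what makes the statement meaningful as a reduction to the comparable case already handled in Theorem~\ref{tcomp}.

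Next I would simplify $[x,x\oplus y]_{\oplus}$. A typical point is $\gamma\otimes x\oplus\delta\otimes(x\oplus y)$ with $\gamma\oplus\delta=1$. Using distributivity of $\otimes$ over $\oplus$ to write $\delta\otimes(x\oplus y)=\delta\otimes x\oplus\delta\otimes y$, and then the identity $\gamma\otimes x\oplus\delta\otimes x=(\gamma\oplus\delta)\otimes x=1\otimes x=x$, the point collapses to $x\oplus\delta\otimes y$. As the pairs $(\gamma,\delta)$ with $\gamma\oplus\delta=1$ realize every value $\delta\in[0,1]$ (take $\gamma=1$), this yields $[x,x\oplus y]_{\oplus}=\{x\oplus\delta\otimes y\mid\delta\in[0,1]\}$. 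The symmetric computation gives $[x\oplus y,y]_{\oplus}=\{\gamma\otimes x\oplus y\mid\gamma\in[0,1]\}$.

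Finally I would compare with $[x,y]_{\oplus}=\{\alpha\otimes x\oplus\beta\otimes y\mid\alpha\oplus\beta=1\}$. Since $1$ is the top element of $\mmset$ and $\oplus=\max$, the constraint $\alpha\oplus\beta=1$ forces $\alpha=1$ or $\beta=1$. In the first case the point is $x\oplus\beta\otimes y$, which by the previous paragraph lies in $[x,x\oplus y]_{\oplus}$; in the second it is $\alpha\otimes x\oplus y\in[x\oplus y,y]_{\oplus}$. Conversely, every element of either right-hand segment is of exactly this form with $\alpha=1$ or $\beta=1$, hence lies in $[x,y]_{\oplus}$. This establishes the equality, and the two pieces meet at $x\oplus y$ (obtained from $\delta=1$, respectively $\gamma=1$), which justifies the word ``concatenation''.

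I do not expect a genuine obstacle here: the whole argument is driven by the distributive and idempotent/absorption laws of the max-min semiring. The only point requiring a little care is the observation that $\alpha\oplus\beta=1$ is equivalent to at least one of $\alpha,\beta$ being exactly $1$, which relies on the normalizing constant coinciding with the top element of $\mmset$; this is precisely what lets the single case split cover all of $[x,y]_{\oplus}$.
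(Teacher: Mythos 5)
Your proof is correct: the verification that $x\leq x\oplus y$ and $y\leq x\oplus y$, the collapse of $\gamma\otimes x\oplus\delta\otimes(x\oplus y)$ to $x\oplus\delta\otimes y$ via distributivity and $(\gamma\oplus\delta)\otimes x=x$, and the case split $\alpha=1$ or $\beta=1$ forced by $\alpha\oplus\beta=1$ all check out, and they yield exactly the claimed union with the two pieces meeting at $x\oplus y$. The paper itself states this theorem without proof, deferring to \cite{NS-08I,Ser-03}, and your direct semiring computation is the standard argument one would find there, so there is nothing to flag.
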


All types of segments for $d=2$ are shown in the right side of Figure 1.

The left side of Figure 1 shows a diagram, where for
$x=(x_1,x_2,x_3)$ and $y=(y_1, y_2, y_3)$, the segments $[x_1,y_1],
[x_2,y_2],$ and $[x_3,y_3]$ are placed over one another, and their
arrangement induces a tiling of the horizontal axis, which shows the
possible values of the parameter $\beta$. The partition of the real
line induced by this tiling is associated with the intervals
$\sigma_l$, and the sets of {\em active indices} $i$ with
$z_i(\beta)=\beta$ associated with each $\sigma_l$ are also shown.

\begin{figure}[h]
\begin{tikzpicture}[scale=.68]
\node at (4,3) {Segments in $\mmset^2$, comparable endpoints};
\draw [line width = 1] (0,0)--(2,0)--(2,2)--(0,2)--(0,0)--(2,2);
\draw [line width = 2] (.1, .6)--(.45,.6)--(.45,1.7);
\draw [line width = 1] (3,0)--(5,0)--(5,2)--(3,2)--(3,0)--(5,2);
\draw [line width = 2] (3.1,.6)--(3.6,.6)--(4.5,1.5)--(4.5,1.9);
\draw [line width = 1] (6,0)--(8,0)--(8,2)--(6,2)--(6,0)--(8,2);
\draw [line width = 2] (6.1,.6)--(6.6,.6)--(7.5,1.5)--(7.9,1.5);
\draw [line width = 1] (0,-3)--(2,-3)--(2,-1)--(0,-1)--(0,-3)--(2,-1);
\draw [line width = 2] (.5,-2.9)--(.5,-2.6)--(1.5,-2.6);
\draw [line width = 1] (3,-3)--(5,-3)--(5,-1)--(3,-1)--(3,-3)--(5,-1);
\draw [line width = 2] (3.5,-2.9)--(3.5,-2.5)--(4.5,-1.5)--(4.5,-1.1);
\draw [line width = 1] (6,-3)--(8,-3)--(8,-1)--(6,-1)--(6,-3)--(8,-1);
\draw [line width = 2] (6.5,-2.9)--(6.5,-2.5)--(7.5,-1.5)--(7.8,-1.5);
\node at (4,-4) {Segment in $\mmset^2$, incomparable endpoints};
\draw [line width = 1] (3,-7)--(5,-7)--(5,-5)--(3,-5)--(3,-7)--(5,-5);
\draw [line width = 2] (3.4,-5.5)--(4.6,-5.5)--(4.6,-6.7);

\draw [line width = 1] (-10,-6)--(-2,-6);
\draw [line width = 2] (-9,-2)--(-5,-2);
\draw [line width = 2] (-8,-1)--(-3,-1);
\draw [line width = 2] (-7,0)--(-4,0);
\draw [line width = 1] [dotted] (-9,-6)--(-9,1);
\draw [line width = 1] [dotted] (-8,-6)--(-8,1);
\draw [line width = 1] [dotted] (-7,-6)--(-7,1);
\draw [line width = 1] [dotted] (-5,-6)--(-5,1);
\draw [line width = 1] [dotted] (-4,-6)--(-4,1);
\draw [line width = 1] [dotted] (-3,-6)--(-3,1);
\node at (-9,-6.5) {$t_1$};
\node at (-8,-6.5) {$t_2$};
\node at (-7,-6.5) {$t_3$};
\node at (-5,-6.5) {$t_4$};
\node at (-4,-6.5) {$t_5$};
\node at (-3,-6.5) {$t_6$};
\node at (-1.8,-6.5) {$\beta$};

\node at (-9.5,-5.5) {\tiny{$\emptyset$}};
\node at (-8.5,-5.5) {\tiny{$\{1\}$}};
\node at (-7.5,-5.5) {\tiny{$\{1,2\}$}};
\node at (-6,-5.5) {\tiny{$\{1,2,3\}$}};
\node at (-4.5,-5.5) {\tiny{$\{2,3\}$}};
\node at (-3.5,-5.5) {\tiny{$\{2\}$}};
\node at (-2.5,-5.5) {\tiny{$\emptyset$}};
\node at (-1.5,-5.5) {\tiny{$M(\beta)$}};

\node at (-9.5,-4.5) {$\sigma_0$};
\node at (-8.5,-4.5) {$\sigma_1$};
\node at (-7.5,-4.5) {$\sigma_2$};
\node at (-6,-4.5) {$\sigma_3$};
\node at (-4.5,-4.5) {$\sigma_4$};
\node at (-3.5,-4.5) {$\sigma_5$};
\node at (-2.5,-4.5) {$\sigma_6$};

\node at (-9.5,-2) {$x_1$};
\node at (-4.5,-2) {$y_1$};
\node at (-8.5,-1) {$x_2$};
\node at (-2.5,-1) {$y_2$};
\node at (-7.5,0) {$x_3$};
\node at (-3.5,0) {$y_3$};

\node at (-5.5,3) {Diagram showing intervals $\sigma_{\ell}$ and sets};
\node at (-5.5,2.3) {of coordinates moving together $M(\beta)$};
\end{tikzpicture}
\caption{Max-min segments.}
\end{figure}
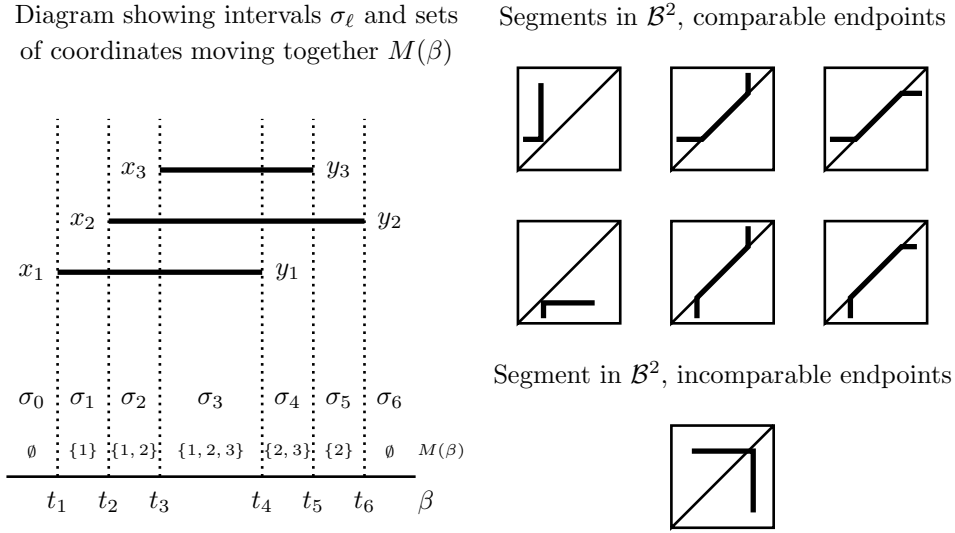
\medskip

\begin{remark}
\label{rsamerole}  We observe that, similarly
to the max-plus case (see \cite{NS-1}, Remark 4.3) in $\mmset^d$ there are elementary segments in only $2^d-1$ directions. Elementary
segments are the "building blocks" for the max-min segments in $\mmset^d,$ in the sense that every segment $[x,y]\subset \mmset^d$ is the concatenation of a
finite number of elementary subsegments (at most) $2d-1$, respectively $2d-2$, in the case of comparable, respectively incomparable, endpoints.
\end{remark}

Max-min segments allow to introduce a natural metric on $\mmset^d$ (\cite{EJN}). More precisely, one defines the distance between two points to be the Euclidean length of the max-min segment joining them.

\section{Description of semispaces}\label{s:semispaces}

For any point $x^0=(x_{1}^{0}, \dots, x_{d}^{0})\in \mmset^d$ we
define a finite family of subsets $S_0(x^0),\dots,S_d(x^0)$ in
$\mmset^d$. These subsets were shown to be semispaces in
\cite[Proposition 4.1]{NS-08II}. A point $x^0$ is called {\em
finite} if it has all coordinates different from zeros and ones.
This definition is motivated by the isomorphic version of max-min
algebra where the least element (and zero of the semiring) is
$-\infty$, and the greatest element (and unity of the semiring) is
$+\infty$.

Without loss of generality we may assume that $x^0$ is {\bf non-increasing}: $ x_{1}^{0}\geq \dots \geq x_{d}^{0}. $ Writing
this more precisely we have
\begin{equation}\label{permut5}
\begin{gathered}
x_{1}^{0}
=\dots =x_{k_{1}}^{0}>\dots >x_{k_{1}+l_{1}+1}^{0}=\dots =x_{k_{1}+l_{1}+k_{2}}^{0}>\dots \\
>x_{k_{1}+l_{1}+
k_{2}+l_{2}+1}^{0}=\dots =x_{k_{1}+l_{1}+k_{2}+l_{2}+k_{3}}^{0}>\dots\\
>x_{k_{1}+l_{1}+\dots +k_{p-1}+l_{p-1}+1}^{0}=\dots =x_{k_{1}+l_{1}+\dots +k_{p-1}+l_{p-1}+k_{p}}^{0}\\
>\dots >x^0_{k_{1}+l_{1}+\dots +k_{p}+l_{p}}(=x^0_d),
\end{gathered}
\end{equation}
where $\sum_{j=1}^p(k_j+l_j)=d$, $k_1=0$ if the sequence \eqref{permut5} starts with strict inequalities and $l_p=0$ if the sequence ends with equalities.

Let us introduce the following notations:
\begin{equation*}
\begin{split}
L_{0} &= 0,K_{1}=k_{1},L_{1}=K_{1}+l_{1}=k_{1}+l_{1},\\
K_{j} &= L_{j-1}+k_{j}=k_{1}+l_{1}+...+k_{j-1}+l_{j-1}+k_{j}\quad
(j=2,...,p), \\
L_{j} &= K_{j}+l_{j}=k_{1}+l_{1}+...+k_{j}+l_{j}\quad (j=2,...,p);
\end{split}
\end{equation*}
we observe that $l_{j}=0$ if and only if $K_{j}=L_{j}.$

We are ready to define the subsets. We need to distinguish the cases when the
sequence \eqref{permut5} ends with zeros or begin with ones, since some subsets $S_i$ become empty in that case.

\begin{definition}
\label{def:semi}
Let $x^0\in\mmset^d$ be a non-increasing vector\\
a) If $x^{0}$ has $0<x_i^0<1$ for all
$1\leq i\leq d$, then define:
\begin{equation*}
\begin{split}
S_{0}(x^0)=&\{x\in \mmset^{d}|x_{i}>x_{i}^{0}\text{ for some }1\leq i\leq
d\}, \\
S_{K_{j}+q}(x^0)=&\{x\in \mmset^{d}|x_{K_{j}+q}<x_{K_{j}+q}^{0},\text{ or }%
x_{i}>x_{i}^{0}\\
& \text{ for some }K_{j}+q+1\leq i\leq d\} (q=1,...,l_{j};j=1,...,p \text{ if }l_{j}\neq 0),\\
S_{L_{j-1}+q}(x^0)=&\{x\in \mmset^{d}|x_{L_{j-1}+q}<x_{L_{j-1}+q}^{0},\text{
or }x_{i}>x_{i}^{0}\\
& \text{ for some }K_{j}+1\leq i\leq d\} \\
& (q=1,...,k_{j};j=1,...,p\text{ if }k_{1}\neq 0,\text{ or }j=2,...,p\text{ if
}k_{1}=0).
\end{split}
\end{equation*}

b) If there exists an index $i\in \{1,...,d\}$\ such that $x_{i}^{0}=1,$ but no index $j$ such that $x_{j}^{0}=0,$ then define the subsets $S_{1},...,S_{d}$ as in part a).

c) If there exists an index $j\in \{1,...,d\}$\ such that $x_{j}^{0}=0,$ but no index $i$ such
that $x_{i}^{0}=1,$ then define the subsets $S_{0},S_{1},...,S_{\beta -1}$ as in part a), where
$
\beta :=\min \{1\leq j\leq n|\;x_{j}^{0}=0 \}.
$

d) If there exists an index $i\in \{1,...,d\}$\ such that $x_{i}^{0}=1,$
and an index $j$ such that $x_{j}^{0}=0,$ then define the subsets $S_{1},...,S_{\beta -1}$ as in part a), where
$
\beta :=\min \{1\leq j\leq n|\;x_{j}^{0}=0 \}.
$
\end{definition}

Let now $x^0\in\mmset^d$ have arbitrary order of coordinates, and let us
formally extend Definition~\ref{def:semi}. For this, consider a permutation
$\pi$ of the index set $\{1,\ldots,d\}$ such that the vector
$(x_{\pi(1)}, x_{\pi(2)},\ldots,x_{\pi(d)})$ is non-increasing. Let
$\overline{\pi}:\mmset^d\to\mmset^d$ be the invertible map of $\mmset^d$ induced by
the permutation $\pi$. Then we can define
$S_i(x^0)=\overline{\pi}^{-1} (S_j(\overline{\pi}(x^0)))$, where $j=\pi(i)$.

Further, for any $x^0\in \mmset^d$ we denote by $I(x^0)$ the set of indices $i$
such that $S_{\pi(i)} (\overline{\pi}(x^0))$ is present in Definition~\ref{def:semi}. Observe
that $I(x^0)$ consists of the indices $i$ such that $x^0_i>0$ and, possibly, $0$.

Pictures of all semispaces at a finite point for $d=2$ are shown in Figure 2.

\begin{figure}[h]
\begin{tikzpicture}[scale=1.3]
\node at (4,2.5) {Semispaces at a point with equal coordinates};

\draw [line width = 0] [fill = lightgray] (.7,0)--(2,0)--(2,2)--(0,2)--(0,.7)--(.7,.7)--(.7,0);
\draw [line width = 1]  (.7,0)--(2,0)--(2,2)--(0,2)--(0,.7);
\draw [dotted, line width = 1] (0,.7)--(0,0)--(.7,0)--(.7,.7)--(0,.7);
\draw[fill=white] (.7,.7) circle (.05);

\draw [line width = 0] [fill = lightgray] (3.7,0)--(3,0)--(3,2)--(3.7,2)--(3.7,0);
\draw [line width = 1] (3.7,0)--(3,0)--(3,2)--(3.7,2);
\draw [dotted, line width = 1] (3.7,0)--(5,0)--(5,2)--(3.7,2)--(3.7,0);
\draw[fill=white] (3.7,.7) circle (.05);

\draw [line width = 0] [fill = lightgray] (6,.7)--(6,0)--(8,0)--(8,.7)--(6,.7);
\draw [line width = 1] (6,.7)--(6,0)--(8,0)--(8,.7);
\draw [dotted, line width = 1] (6,.7)--(6,2)--(8,2)--(8,.7)--(6,.7);
\draw[fill=white] (6.7,.7) circle (.05);

\node at (4,-.5) {Semispaces at a point with unequal coordinates};

\draw [line width = 0] [fill = lightgray] (1.2,-3)--(1.2,-2.5)--(0,-2.5)--(0,-1)--(2,-1)--(2,-3)--(1.2,-3);
\draw [line width = 1] (1.2,-3)--(2,-3)--(2,-1)--(0,-1)--(0,-2.5);
\draw [dotted, line width = 1] (0,-3)--(1.2,-3)--(1.2,-2.5)--(0,-2.5)--(0,-3);
\draw[fill=white] (1.2,-2.5) circle (.05);

\draw [line width = 0] [fill = lightgray] (4.2,-3)--(4.2,-2.5)--(5,-2.5)--(5,-1)--(3,-1)--(3,-3)--(4.2,-3);
\draw [line width = 1] (3,-3)--(4.2,-3);
\draw [line width = 1] (5,-2.5)--(5,-1)--(3,-1)--(3,-3);
\draw [dotted, line width = 1] (4.2,-3)--(5,-3)--(5,-2.5)--(4.2,-2.5)--(4.2,-3);
\draw[fill=white] (4.2,-2.5) circle (.05);

\draw [line width = 0] [fill = lightgray] (6,-2.5)--(6,-3)--(8,-3)--(8,-2.5)--(6,-2.5);
\draw [line width = 1] (6,-2.5)--(6,-3)--(8,-3)--(8,-2.5);
\draw [dotted, line width = 1] (6,-2.5)--(8,-2.5)--(8,-1)--(6,-1)--(6,-2.5);
\draw[fill=white] (7.2,-2.5) circle (.05);
\end{tikzpicture}
\caption{Semispaces in $\mmset^2$ at a finite point}
\end{figure}

The following theorem is the main result in \cite{NS-08II}. See also \cite{N-Ser2}.

\begin{theorem} \label{p:semisp-conv}
For any $p\in \mmset^d$ the sets $S_i(p), i\in I(p),$ are maximal (with respect to the set inclusion)
max-min convex avoiding the point $p$.
Thus for any $p\in \mmset^d$, there exists at least one and at most $d+1$ semispaces $S_i(p), 0\le i\le d,$ at $p$.

For all $C\subseteq \mmset^d$ max-min convex and any $p\in \mmset^d
\setminus C$, there exists a semispace $S_i(p)$ such that
$C\subseteq S_i(p)$ and $p\not \in S_i(p)$.
\end{theorem}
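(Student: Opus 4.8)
Since permuting coordinates is a max-min linear isomorphism of $\mmset^d$ (it preserves segments, hence convexity) and the construction $S_i(x^0)=\overline{\pi}^{-1}(S_j(\overline{\pi}(x^0)))$ preceding the theorem transports the $S_i$ accordingly, I would first reduce to the case of a non-increasing $p=x^0$ and argue directly from Definition~\ref{def:semi}. The statement then splits into three tasks: (1) each $S_i(p)$ is max-min convex; (2) $p\notin S_i(p)$; (3) the separation property, from which maximality will follow. Avoidance (2) is immediate, since substituting $x=p$ into the defining strict inequalities of any $S_i(p)$ makes every disjunct false.

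For convexity (1) I would use the coordinatewise formula $\bigl(\bigoplus_k\lambda_k\otimes x^k\bigr)_m=\max_k\min(\lambda_k,x^k_m)$ for a max-min convex combination, together with the observation from~\eqref{segm0} that every point of a segment $[x,y]_{\oplus}$ dominates one of its endpoints $x$ or $y$. The latter already shows that every ``up-set'' $\{x\colon x_m>p_m \text{ for some } m\in T\}$ is max-min convex; each $S_i(p)$ is such an up-set augmented by a single lower constraint $x_i<p_i$, and one checks that this augmentation preserves convexity. This is a routine case analysis, which can also be read off from Theorems~\ref{tcomp} and~\ref{tincomp}.

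The crux is the separation property (3), which I would prove by contraposition. Assume $C$ is max-min convex and $C\not\subseteq S_i(p)$ for every $i\in I(p)$, and choose witnesses $w^i\in C\setminus S_i(p)$. Decoding $w^i\notin S_i(p)$ yields, for each present index $i\ge1$, a lower bound $w^i_i\ge p_i$ together with a tail upper bound $w^i_m\le p_m$ for every $m$ past the block of $i$; and $w^0\notin S_0(p)$ gives $w^0\le p$. I would then put $\lambda_0=1$ and $\lambda_i=p_i$ for the remaining present indices --- so that $\bigoplus_i\lambda_i=1$, relying on $w^0\le p$ when $S_0(p)$ is present and on $p_1=1$ when some coordinate equals $1$ --- and claim $\bigoplus_i\lambda_i\otimes w^i=p$. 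Coordinate $j$ is verified thus: the term $i=j$ contributes $\min(p_j,w^j_j)=p_j$, securing the lower bound, while every other term is $\le p_j$, since for $i=0$ we have $w^0_j\le p_j$; for $i>j$ we have $\min(p_i,w^i_j)\le p_i\le p_j$; and for $i<j$, either $j$ lies past the block of $i$, so $w^i_j\le p_j$ by the tail bound, or $j$ lies in the same equal block as $i$, so $\min(p_i,w^i_j)\le p_i=p_j$. Hence $\bigoplus_i\lambda_i\otimes w^i=p$, so $p\in\conv\{w^i\}\subseteq C$, contradicting $p\notin C$. The main obstacle is exactly this identity: the bookkeeping of the $K_j,L_j$ blocks, the interplay of lower and tail-upper bounds, and the separate handling of the degenerate cases b)--d), where some $S_i(p)$ are absent and the normalization $\bigoplus_i\lambda_i=1$ must be secured accordingly.

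Finally, maximality follows by combining (1)--(3): if a max-min convex set $D$ with $p\notin D$ satisfied $S_i(p)\subsetneq D$, then by (3) we get $D\subseteq S_{i'}(p)$ for some present $i'$, forcing $S_i(p)\subsetneq S_{i'}(p)$; so it remains to verify that distinct semispaces are pairwise incomparable, which I would do by exhibiting for each pair $i\ne i'$ an explicit point of $S_i(p)\setminus S_{i'}(p)$ from the defining inequalities. The count ``at least one and at most $d+1$'' then records that the surviving indices of Definition~\ref{def:semi} form the subset $I(p)\subseteq\{0,1,\dots,d\}$.
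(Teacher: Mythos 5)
Your proposal is correct, but it takes a genuinely different route from the paper, which does not prove this theorem at all: it is quoted as the main result of \cite{NS-08II}, and the separation half is re-derived in Corollary~\ref{ns-08ii} as the point case of the interval separation Theorem~\ref{interval-sep} (itself cited from \cite{N-Ser2}). Your argument is self-contained and, in its key step, constructive: given witnesses $w^i\in C\setminus S_i(p)$ for all $i\in I(p)$, you exhibit $p$ explicitly as the combination $\bigoplus_i\lambda_i\otimes w^i$ with $\lambda_0=1$, $\lambda_i=p_i$, and the coordinatewise verification you sketch does go through in all four cases a)--d) of Definition~\ref{def:semi} (the lower bound at coordinate $m$ comes from the witness $w^m$ whenever $p_m>0$, i.e.\ whenever $m\in I(p)$, and the upper bound from the tail inequalities together with monotonicity of $p$). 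Note that this is precisely the (ii)$\rightarrow$(i) direction of the Multiorder principle (Lemma~\ref{basic-tool}), which the paper deduces \emph{from} Theorem~\ref{p:semisp-conv}; you invert that dependency and get both statements at once, which is a real gain in self-containedness. Two points you defer deserve the explicit check: the convexity of $S_i(p)$ for $i\geq 1$ needs the observation that every tail index $m$ of $S_i(p)$ satisfies $p_m<p_i$ strictly (this is exactly what the $K_j,L_j$ bookkeeping guarantees, and it is what makes $\min(\beta,y_m)>p_m$ when $\beta\geq p_i$); and the pairwise incomparability of distinct semispaces, needed for maximality, is obtained by perturbing a single coordinate of $p$ downward (or upward for $S_0$), which works for all indices surviving in $I(p)$. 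With those details written out, your proof is complete.
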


The complement of a semispace $S_i(p)$ is denoted by $\complement
S_i(p)$. These complements are also called {\em sectors}, in analogy
with the max-plus convexity.

The lemma below follows from the abstract definition of the
semispaces and it is our main tool in extending Carath\'eodory
theorem and its colorful versions to the max-min setup. As only a
finite number of semispaces at a given point exist, the max-min
convexity can be regarded as a multiorder
convexity~\cite{NS-08I,NS-08II}.

\begin{lemma}[Multiorder principle] \label{basic-tool} Let $X\subseteq \mmset^d$ and $p\in \mmset^d$. Then the following statements are equivalent:
\begin{enumerate}
\item[(i)] $p\in \conv(X)$;
\item[(ii)] for all $i\in I(p),$ there exists $x^i\in X$ such that $x^i\in \complement S_i(p)$.
\end{enumerate}
\end{lemma}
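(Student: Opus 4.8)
The plan is to prove both implications by contraposition, leaning entirely on the two properties of semispaces recorded in Theorem~\ref{p:semisp-conv}: that each $S_i(p)$ with $i\in I(p)$ is max-min convex and avoids $p$, and that these sets separate, in the sense that any max-min convex set not containing $p$ is contained in one of them. In effect the lemma is just a coordinatewise restatement of the abstract separation property, so most of the work has already been done in Theorem~\ref{p:semisp-conv}.

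For (i)$\Rightarrow$(ii) I would argue contrapositively. Suppose (ii) fails, so there is some $i\in I(p)$ for which no $x\in X$ lies in $\complement S_i(p)$; equivalently $X\subseteq S_i(p)$. Since $S_i(p)$ is max-min convex by Theorem~\ref{p:semisp-conv}, it contains $\conv(X)$, the smallest convex set containing $X$. As $p\notin S_i(p)$ (again by Theorem~\ref{p:semisp-conv}), we conclude $p\notin\conv(X)$, that is, (i) fails.

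For (ii)$\Rightarrow$(i), again contrapositively, suppose $p\notin\conv(X)$. Because $\conv(X)$ is max-min convex and does not contain $p$, the separation part of Theorem~\ref{p:semisp-conv} yields an index $i\in I(p)$ with $\conv(X)\subseteq S_i(p)$ and $p\notin S_i(p)$. In particular $X\subseteq S_i(p)$, so $X\cap\complement S_i(p)=\emptyset$, and no $x^i\in X$ lies in $\complement S_i(p)$ for this $i$; hence (ii) fails.

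I do not expect a serious obstacle in the body of the proof, as the argument is a direct translation of the abstract separation property into the language of the families $S_i(p)$. The one point requiring care is the quantifier structure around $I(p)$. In (i)$\Rightarrow$(ii) the conclusion must hold for \emph{every} $i\in I(p)$, which is handled uniformly precisely because each such $S_i(p)$ is a genuine semispace at $p$. In (ii)$\Rightarrow$(i) one must ensure that the semispace produced by separation is one of the $S_i(p)$ with $i\in I(p)$, rather than some other maximal convex set; this is exactly what Theorem~\ref{p:semisp-conv} guarantees, since it asserts that every semispace at $p$ appears in the finite list indexed by $I(p)$.
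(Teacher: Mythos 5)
Your argument is correct and is essentially the paper's own proof: both directions rest on Theorem~\ref{p:semisp-conv} exactly as you use it, with the first direction noting that $X\subseteq S_i(p)$ forces $\conv(X)\subseteq S_i(p)$ while $p\notin S_i(p)$, and the second invoking the separation of $p$ from $\conv(X)$ by some $S_{i_0}(p)$. The only difference is cosmetic — you phrase each implication as a contrapositive where the paper phrases it as a proof by contradiction.
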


\begin{proof} (i) $\rightarrow$ (ii) By contradiction. Assume there is $i_0\in I(p)$ such that $X\cap \complement S_{i_0}(p)=\emptyset$. Then $p\in\conv(X)\subseteq S_{i_0}(p)$, in contradiction to $p\not \in  S_{i_0}(p)$.

(ii) $\rightarrow$ (i) By contradiction. Assume that $p\not \in
\conv(X)$. As $\conv(X)$ is a convex set, it follows from Theorem
\ref{p:semisp-conv} that there exists $i_0\in I(p)$ such that
$\conv(X)\subseteq S_{i_0}(p)$, which implies $\complement
S_{i_0}(p)\subseteq \complement \conv(X)$. But from (ii), there
exists $x_{i^0}\in \complement S_{i_0}(p)\cap \conv(X)$, which gives
a contradiction.
\end{proof}

%For simplicity we sometimes call the complements of semispaces \emph{sectors}.

\section{Separation and non-separation}\label{s:separation}

In what follows $\mmset^d$ has the usual Euclidean topology. If $A\subseteq \mmset^d$, we denote by $\overline{A}$ the closure of $A$, by $\text{int}(A)$ the interior of $A$ and by $\complement A$ the complement of $A$.

In the tropical convexity, all semispaces are open tropical halfspaces expressed
as solution sets to a strict two-sided max-linear inequality. See e.g. \cite{NS-1}. Thus the closures
of semispaces are hyperplanes.

In the case of max-min convexity, hyperplane in $\mmset^d$ can be defined as the solution set to a max-min linear equation
{\small
\begin{equation}\label{e:maxmin-hyper}
 \max(\min (a_1,x_1),\ldots,\min(a_d,x_d),a_{d+1})=
 \max(\min (b_1,x_1),\ldots,\min(b_d,x_d),b_{d+1}).
\end{equation}
}

The structure of a max-min hyperplane is presented in \cite{Nit-09}. One investigates the distribution of values for the left and right hand side of \eqref{e:maxmin-hyper}, and then identifies the regions in $\mmset^d$ where the values of the sides coincide. We illustrate this procedure in Figure 3, which shows the structure of a max-min hyperplane (line) in $\mmset^2$. The left side pictures show the distribution of values for both sides of \eqref{e:maxmin-hyper}: for the white regions the distribution is uniform and the value is equal to the coordinate of the finite point on the main diagonal that belongs to their boundary; the regions labeled $x_1$ are tiled by vertical lines for which the value of each point is equal to its $x_1$ coordinate, and the regions labeled $x_2$ are tiled by horizontal lines for which the value of each point is equal to its $x_2$ coordinate. The right side picture shows the graph of the line.

\begin{figure}[h]
    \centering
        \includegraphics[scale=0.7]{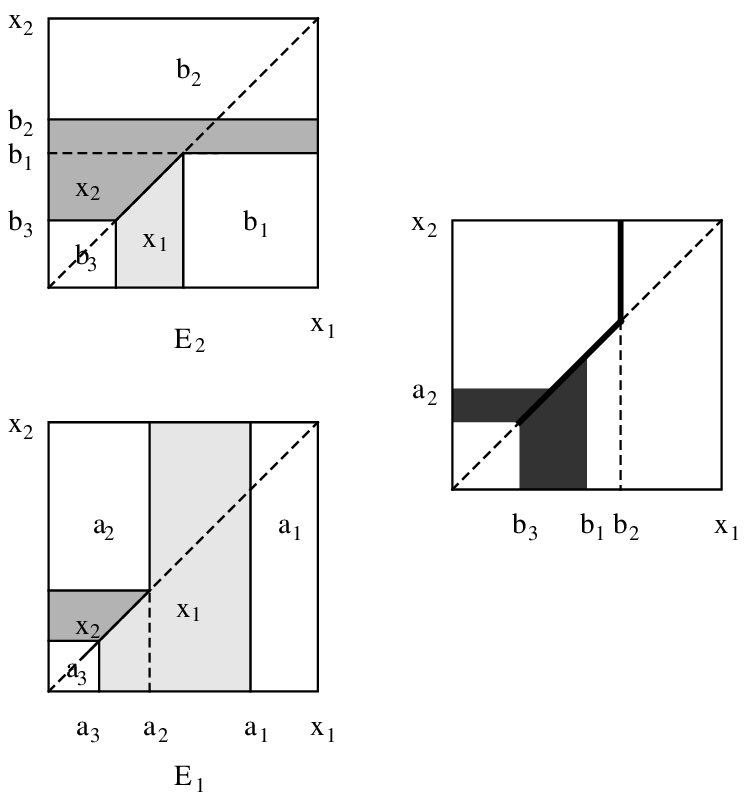}
        \label{figure1}
        \caption{A max-min hyperplane (line) in $\mmset^2$.}
    \end{figure}

In~\cite{N-Ser1} we investigated the relation between the max-min hyperplanes
and the closures of semispaces ${S}_i(x)$. We recall that the \emph{diagonal} of $\mmset^d$ is the set $\cD_d=\{(a,\ldots,a)\in \mmset^d\mid a\in\mmset\}.$

\begin{theorem}[\cite{N-Ser1}, Theorem 3.1]
\label{t:NitSer1}
A closure of semispace is a hyperplane if and only if it can be represented
as $\overline{S}_i(y)$ for some $y$ belonging to the diagonal.
\end{theorem}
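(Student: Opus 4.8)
The goal is to characterize exactly which closures $\overline{S}_i(y)$ are max-min hyperplanes, namely those with $y$ on the diagonal $\cD_d$. The plan is to attack the two implications separately, using the explicit description of semispaces from Definition~\ref{def:semi} together with the structure of hyperplanes encoded in equation~\eqref{e:maxmin-hyper}.

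For the \emph{if} direction, I would start from a diagonal point $y=(a,\ldots,a)$ and write down the semispaces $S_i(y)$ explicitly. At such a point all coordinates are equal, so in the notation of \eqref{permut5} there is a single block ($p=1$, $k_1=0$ or $l_1=0$ depending on strictness), and the defining inequalities of the $S_i(y)$ collapse to conditions of the form ``$x_i<a$'' or ``$x_j>a$ for some $j$.'' The closure then replaces strict inequalities by non-strict ones, and my task is to exhibit, for each such $\overline{S}_i(y)$, explicit coefficient vectors $a_1,\ldots,a_{d+1}$ and $b_1,\ldots,b_{d+1}$ making \eqref{e:maxmin-hyper} hold precisely on $\overline{S}_i(y)$. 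I expect the natural choice to use the value $a$ as the threshold on one side (e.g.\ a constant term $a_{d+1}=a$) and the relevant coordinate as a variable term on the other, so that the max-min equation selects exactly the region where some coordinate reaches or exceeds $a$, matching the closed sector. This is a verification that the region of equality of the two sides of \eqref{e:maxmin-hyper} coincides with the closed semispace, and I would check it by comparing the sets of defining inequalities directly.

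For the \emph{only if} direction, I would argue contrapositively: if $y$ is \emph{not} on the diagonal, then $\overline{S}_i(y)$ cannot be a hyperplane. Here I would exploit the fact that an off-diagonal $y$ has at least two distinct coordinate values, so the block structure \eqref{permut5} has $p\geq 2$ (or a nontrivial split), and the defining inequalities of $S_i(y)$ mix thresholds at \emph{different} levels $x_i^0$. The key geometric point, made visible in Figure~3, is that a genuine max-min hyperplane in $\mmset^d$ has its ``branching'' locus tied to a single finite point on the main diagonal, so its region structure is governed by one threshold value; a closed semispace at an off-diagonal point, by contrast, has boundary facets sitting at two or more distinct coordinate levels and so cannot arise as the solution set of a single equation \eqref{e:maxmin-hyper}. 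To make this rigorous I would analyze the possible value distributions of the two sides of \eqref{e:maxmin-hyper} and show that the resulting equality set is always ``diagonally anchored,'' hence incompatible with the facet geometry of $\overline{S}_i(y)$ when $y\notin\cD_d$.

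The main obstacle, I expect, is the \emph{only if} direction: one must rule out \emph{every} possible hyperplane \eqref{e:maxmin-hyper}, not just the obvious candidates, from representing a given off-diagonal closed semispace. This requires a careful classification of the solution sets of \eqref{e:maxmin-hyper} in terms of where the maximizing term switches between a variable $\min(a_i,x_i)$ and the constants $a_{d+1},b_{d+1}$, and then matching that against the explicit facet description of $\overline{S}_i(y)$. The cleanest route is probably to invoke the structural description of max-min hyperplanes from \cite{Nit-09} (the regions and their value distributions illustrated in Figure~3) as a known result, reducing the argument to a comparison of two explicitly described region decompositions; the hard combinatorial work is then organizing the case analysis by the block structure \eqref{permut5} of $y$ and tracking the positions of the finite point(s) relative to the diagonal.
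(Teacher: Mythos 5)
Your \emph{if} direction is sound in outline: at a diagonal point $y=(a,\dots,a)$ Definition~\ref{def:semi} collapses to $S_q(y)=\{x\mid x_q<a\}$ for $q=1,\dots,d$ and $S_0(y)=\{x\mid x_i>a\ \text{for some } i\}$, and the closures $\{x\mid x_q\le a\}$ and $\{x\mid \max_i x_i\ge a\}$ are readily exhibited as solution sets of \eqref{e:maxmin-hyper} (for instance $\max(\min(a,x_q),0)=\max(\min(1,x_q),0)$, respectively $\max(x_1,\dots,x_d,a)=\max(x_1,\dots,x_d,0)$).

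The genuine gap is in your \emph{only if} direction: the statement you propose to prove contrapositively --- that $y\notin\cD_d$ implies $\overline{S}_i(y)$ is not a hyperplane --- is not the contrapositive of the theorem, and it is false. The theorem says that a closure of a semispace which is a hyperplane \emph{can be represented} as $\overline{S}_j(y')$ for \emph{some} diagonal $y'$; it does not say that the point at which the semispace was originally taken must be diagonal. Concretely, take $d=2$ and a finite $y=(y_1,y_2)$ with $y_1>y_2$. By Definition~\ref{def:semi} (see also Figure 2) one of its semispaces is $S_2(y)=\{x\mid x_2<y_2\}$, whose closure $\{x\mid x_2\le y_2\}$ is the hyperplane $\max(\min(y_2,x_2),0)=\max(\min(1,x_2),0)$, even though $y$ is off the diagonal; the theorem survives only because this same set is also $\overline{S}_2(y')$ for the diagonal point $y'=(y_2,y_2)$. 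More generally, for any finite non-increasing $y$ the semispaces attached to the block of minimal coordinates reduce to $\{x\mid x_j<y_j\}$ and always have hyperplane closures, so your planned case analysis over the block structure \eqref{permut5} would run straight into these counterexamples. The argument has to be organized around the \emph{sets} rather than the base points: determine exactly which sets $\overline{S}_i(y)$, over all $y$ and $i$, are hyperplanes (here your idea of invoking the classification of hyperplanes from \cite{Nit-09} and the single-threshold, ``diagonally anchored'' structure of their equality regions is the right tool for ruling out the semispaces whose defining inequalities mix two distinct coordinate levels), and then, for each surviving set, exhibit a second representation as a closure of a semispace at a diagonal point. That re-representation step is entirely absent from your plan, and without it the ``only if'' direction does not close.
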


Recall that a set $C\subset \mmset^d$ is separated from a point $x\in \mmset^d$ by a hyperplane $H\subset \mmset^d$ if
$C\subset H$ and $x\not\in H$.
Theorem \ref{t:NitSer1} shows exactly when classical separation by hyperplanes is possible.

\begin{corollary}[\cite{N-Ser1}, Corollary 3.3 and 3.4]
Let $x\in\mmset^d$, then
any closed max-min convex set $C\subseteq\cB^d$ not containing $x$
can be separated from $x$ by a hyperplane if and only if
$x$ lies on the diagonal.
\end{corollary}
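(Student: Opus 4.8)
The plan is to bridge separation by hyperplanes with the separation by semispaces furnished by Theorem~\ref{p:semisp-conv}, and then to read off the answer from the characterization of those semispace closures that are hyperplanes (Theorem~\ref{t:NitSer1}). The underlying principle is that separation of a point from a closed convex set is \emph{always} available through a semispace; the only question is whether the separating set can be upgraded to a hyperplane, and Theorem~\ref{t:NitSer1} says that this upgrade is possible exactly at diagonal points.

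For the implication ``$x\in\cD_d\Rightarrow$ separable'', I would start from a closed max-min convex $C$ with $x\notin C$ and apply Theorem~\ref{p:semisp-conv} to obtain an index $i\in I(x)$ with $C\subseteq S_i(x)$ and $x\notin S_i(x)$. Passing to closures and using that $C$ is closed gives $C\subseteq\overline{S}_i(x)$, while Theorem~\ref{t:NitSer1} identifies $\overline{S}_i(x)$ with a hyperplane $H$ precisely because $x$ lies on the diagonal. It then remains to verify that $H$ genuinely separates $x$ from $C$: since $C$ sits inside one of the closed halfspaces bounded by $H$, and $x$ is by construction outside the corresponding (open) semispace, one checks that $x$ lies on the complementary side. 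If $x$ happens to lie on $H$ itself, I would perturb $H$ slightly within the family of diagonal-centered hyperplanes, using compactness of $C$ and the positivity of $\operatorname{dist}(x,C)$, to obtain a strictly separating hyperplane.

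For the converse ``separable $\Rightarrow x\in\cD_d$'' I would argue by contraposition: assuming $x\notin\cD_d$, I would exhibit a single closed max-min convex set $C$ with $x\notin C$ that no hyperplane separates from $x$. The mechanism is that any separating hyperplane produces a closed halfspace, which is itself max-min convex, contains $C$ and avoids $x$, hence by the maximality in Theorem~\ref{p:semisp-conv} is contained in some semispace $S_i(x)$; however, for such a halfspace to exclude $x$ while hugging a set that surrounds $x$ from all the directions dictated by the multiorder principle (Lemma~\ref{basic-tool}), its bounding hyperplane would have to place its apex at $x$, forcing $x$ onto the diagonal by the apex structure underlying Theorem~\ref{t:NitSer1}. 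Concretely, I would build $C$ from points placed just inside each complementary sector $\complement S_i(x)$, $i\in I(x)$, so that $\conv(X)$ still avoids $x$ yet pins every candidate separator down to a non-hyperplane semispace closure.

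I expect the main obstacle to be the converse construction: producing one closed convex set that simultaneously defeats every hyperplane, and proving that it does. This amounts to translating the off-diagonal position of $x$ into a genuine geometric obstruction, which relies on the precise description of max-min hyperplanes (their diagonal apex) rather than on abstract convexity alone: the abstract tools only guarantee a separating \emph{semispace}, and the whole difficulty lies in showing that at off-diagonal points this forced semispace can never itself be a hyperplane.
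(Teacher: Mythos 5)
The paper itself offers no proof here: the corollary is imported from \cite{N-Ser1} and presented as an immediate consequence of Theorem~\ref{t:NitSer1} combined with separation by semispaces, which is exactly the bridge you propose. Your forward direction is essentially the intended one, with one caveat: the case ``$x$ happens to lie on $H$'' is not an edge case but the generic situation. For a diagonal point $x=(a,\ldots,a)$ the semispaces are $S_0(x)=\{y\mid y_i>a\text{ for some }i\}$ and $S_q(x)=\{y\mid y_q<a\}$, and \emph{every} closure $\overline{S}_i(x)$ contains $x$. So the perturbation you mention in passing --- using compactness of the closed set $C\subseteq S_i(x)$ to place the diagonal centre of the separating hyperplane strictly between $C$ and $x$ --- is the entire content of that implication, not a repair for a degenerate subcase. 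It does work, but it should be the main step, and it is where the closedness hypothesis on $C$ is actually used.

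The genuine gap is in the converse. Your proposed witness --- ``points placed just inside each complementary sector $\complement S_i(x)$, $i\in I(x)$, so that $\conv(X)$ still avoids $x$'' --- is impossible: by the multiorder principle (Lemma~\ref{basic-tool}, implication (ii)~$\rightarrow$~(i)), a set $X$ meeting $\complement S_i(x)$ for every $i\in I(x)$ satisfies $x\in\conv(X)$, so such a $C$ cannot avoid $x$. The correct witness goes the other way: for off-diagonal $x$ one takes a closed convex set $C$ (e.g.\ a slightly shrunk closure of a single semispace, of the form $\overline{S_i(x_\epsilon)}$ for a nearby off-diagonal point $x_\epsilon$) that is large enough to be contained in \emph{exactly one} semispace $S_{i}(x)$, namely one whose closure is not a hyperplane by Theorem~\ref{t:NitSer1}. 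Then any separating hyperplane $H$, being convex and avoiding $x$, is squeezed between $C$ and that single $S_{i}(x)$, and one must invoke the structural description of max-min hyperplanes from \cite{Nit-09} to rule out the existence of a hyperplane in that gap. Your closing paragraph correctly identifies that the structure theory of hyperplanes (not abstract convexity) must carry this step, but the construction you give to set it up contradicts Lemma~\ref{basic-tool} and would have to be replaced.
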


%The geometric idea of the proof of Theorem~\ref{t:NitSer1} is
%to construct examples of non-separation in $\mmset^2$ (if
%a point does not belong to the diagonal), and to
%extend it cylindrically to higher dimensions.

%Below $[a,c]$ denotes the ordinary interval on the real line
%$\{b\colon a\leq b\leq c\}$, provided $a\leq c$ (and possibly
%$a=c$).

In~\cite{N-Ser2}, we found a way to enhance separation by semispaces
showing that a point can be replaced by a box, i.e., a Cartesian product of
closed intervals. Namely, we investigated the separation of a box
$B=[\podx_1,\nadx_1]\times\ldots\times[\podx_d,\nadx_d]\subseteq \mmset^d$
from a max-min convex set $C\subseteq\mmset^d$, by which
we mean that there exists a set $S$ described in
Definition \ref{def:semi},
which contains $C$ and avoids $B$.

Assume that $\nadx_1\geq\ldots\geq\nadx_d$ and suppose that
$t(B)$ is the greatest integer such that
$\nadx_{t(B)}\geq \podx_i$ for all $1\leq i\leq t(B)$.
We will need the following condition:
\begin{equation}
\label{sep-cond}
\begin{split}
&\text{ If }(\nadx_1=1 )\ \&\ (y_l\geq\podx_l,1\le l\le d)\ \&\\
&(\nadx_l<y_l\ \text{for some $l\leq t(B)$}),\ \text{ then } y\notin C.
\end{split}
\end{equation}
Note that if the box is reduced to a point and if $\nadx_1=1$, then
$\nadx_l=1$ for all $l\leq t(B)$ so that $\nadx_l<y_l$ is
impossible. So \eqref{sep-cond} always
holds in the case of a point.

\if{
The formulation of our main result will also use an {\em
oracle} answering the question, whether or not
a given max-min convex set $C\subseteq\mmset^n$ lies in a semispace
$S$.
As in the conventional convex geometry
or tropical convex geometry, this question
can be answered in $O(mn)$ time if $C$ is a convex hull of
$m$ points. Indeed it suffices to answer whether any of the
inequalities defining
$S$ is satisfied for each of the $m$ points generating $C$.
}\fi

\begin{theorem}[\cite{N-Ser2}, Theorem~1]
\label{interval-sep}
Let $B=[\podx_1,\nadx_1]\times\ldots\times[\podx_d,\nadx_d]\subseteq \mmset^d$,
and let $C\subseteq\mmset^d$ be a max-min convex set avoiding
$B$. Suppose that $B$ and $C$ satisfy \eqref{sep-cond}.
Then there is a semispace
that contains $C$ and avoids $B$.
%This set is constructed in no more than $n+1$ calls
%to the oracle.
\end{theorem}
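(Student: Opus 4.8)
The plan is to reformulate the statement as the search for a \emph{sector}: by Theorem~\ref{p:semisp-conv} it suffices to produce a set $\Sigma=\complement S$, where $S$ is one of the sets of Definition~\ref{def:semi} (at some base point, in some coordinate order), with $B\subseteq\Sigma$ and $\Sigma\cap C=\emptyset$; the first inclusion says $S$ avoids $B$ and the second says $S\supseteq C$. The relevant candidates are the sectors that contain the whole box. Apart from the downset $\{x\mid x\le\nadx\}=\complement S_0(\nadx)$, available only when $\nadx_1<1$, these are the \emph{pivot sectors} $\Sigma_m=\{x\mid x_m\ge\podx_m,\ x_i\le\nadx_i\text{ for }i\in T_m\}$, obtained from Definition~\ref{def:semi} by putting the lower corner $\podx_m$ on a pivot coordinate $m$ and the upper corner on a tail $T_m$ of later coordinates; each $\Sigma_m$ contains $B$ because $\podx_m\le x_m$ and $x_i\le\nadx_i$ hold throughout $B$. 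First I would record that $B$ is itself max-min convex (via the segment description, Theorem~\ref{tcomp}) and that $\nadx\in B\setminus C$, so Theorem~\ref{p:semisp-conv} already separates the single point $\nadx$ by some $S_i(\nadx)$. The whole difficulty is to replace $\nadx$ by the box, i.e. to lower the threshold on the pivot coordinate from $\nadx_m$ to $\podx_m$ without letting $C$ leak into $\Sigma_m$.

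The engine of the argument is a max-min convex combination, run by contradiction. Assume every candidate sector meets $C$; for each admissible pivot $m$ this yields a witness $c^{(m)}\in C\cap\Sigma_m$, so $c^{(m)}_m\ge\podx_m$ and $c^{(m)}_i\le\nadx_i$ for $i\in T_m$. Choosing coefficients $\lambda_m\in[\podx_m,\nadx_m]$ I would form $z=\bigoplus_m\lambda_m\otimes c^{(m)}\in\conv(C)=C$ and verify $z\in B$: the upper bound $z_k\le\nadx_k$ follows because $\min(\lambda_m,c^{(m)}_k)\le\nadx_k$ both when $k\in T_m$ (tail property) and when $k\notin T_m$ with $k\le m$ (where $\lambda_m\le\nadx_m\le\nadx_k$ by monotonicity of $\nadx$), while the lower bound $z_k\ge\podx_k$ comes from the pivot term $m=k$, giving $\min(\lambda_k,c^{(k)}_k)\ge\podx_k$. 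Then $z\in B\cap C$, contradicting $B\cap C=\emptyset$. A legitimate combination requires $\bigoplus_m\lambda_m=1$, hence some $\lambda_m=1$ and therefore $\nadx_m=1$; since $\nadx$ is non-increasing this forces $\nadx_1=1$. This is exactly why the case $\nadx_1<1$, where \eqref{sep-cond} is vacuous, must be handled separately (and is the easier one, essentially via the downset sector and Lemma~\ref{basic-tool}), and why $\nadx_1=1$ is the substantial case.

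The main obstacle, and the reason $t(B)$ and \eqref{sep-cond} appear, is that not every pivot is \emph{admissible}. The full-tail pivot sector $\Sigma_m$ with $T_m=\{m+1,\dots,d\}$ is the complement of a genuine semispace of Definition~\ref{def:semi} only if a non-increasing base point with $p_m=\podx_m$ and $p_i=\nadx_i\ (i>m)$ exists, which forces $\podx_m\ge\nadx_{m+1}$. Precisely on the prefix of overlapping coordinate intervals — the range governed by $t(B)$, where by definition $\nadx_{t(B)}\ge\podx_i$ for all $i\le t(B)$ — one gets $\podx_m\le\nadx_{m+1}$ instead, the ordinary pivot is unavailable, and the witnesses controlling $z$ on this prefix cannot be produced from the failure assumption alone. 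Here I would invoke the second family of semispaces $S_{L_{j-1}+q}$, whose sectors carry a \emph{gapped} tail skipping an equal block of the base point, to realise the missing pivots over the overlapping prefix; and it is exactly at this point that \eqref{sep-cond} enters, ruling out for the coordinates $l\le t(B)$ with $\nadx_l=1$ precisely those points $y\in C$ with $y\ge\podx$ and $y_l>\nadx_l$ that the gapped combination cannot otherwise keep inside $B$. Assembling the block structure \eqref{permut5} so that every index used is genuinely present in Definition~\ref{def:semi}, while maintaining $\Sigma\supseteq B$ and $\Sigma\cap C=\emptyset$, is the delicate bookkeeping that forms the technical core, and the interplay of the two tail types with the maximality of $t(B)$ is where I expect the real work to lie.
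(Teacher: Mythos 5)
A preliminary remark: this survey states Theorem~\ref{interval-sep} as a quotation from \cite{N-Ser2} and does not reproduce its proof, so there is no in-paper argument to compare yours against step by step; what follows judges the proposal on its own terms. Your architecture is the right one: recasting the problem as finding a sector $\Sigma=\complement S$ with $B\subseteq\Sigma$ and $\Sigma\cap C=\emptyset$, listing the downset $\{x\mid x\le\nadx\}$ and the pivot sectors built from the corners of $B$ as the candidates, and running a contradiction in which witnesses $c^{(m)}\in C\cap\Sigma_m$ are combined into $z=\bigoplus_m\lambda_m\otimes c^{(m)}\in C\cap B$. You also correctly locate the obstruction: the full-tail pivot at $m$ needs a non-increasing base point with $p_m=\podx_m$ and $p_{m+1}=\nadx_{m+1}$, hence $\podx_m\ge\nadx_{m+1}$, which fails exactly on the overlapping prefix governed by $t(B)$.

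The difficulty is that the proposal stops where the theorem actually lives. Three concrete gaps. First, for $k\le t(B)$ you never produce the witness that gives the lower bound $z_k\ge\podx_k$: you say you \emph{would} invoke the gapped sectors $\complement S_{L_{j-1}+q}$, but you do not exhibit the base point (which must be non-increasing and must realise one of the admissible index patterns of Definition~\ref{def:semi}, including the degenerate cases b)--d) when coordinates hit $0$ or $1$), nor check that the resulting sector still contains all of $B$. Second, your verification of $z_k\le\nadx_k$ treats only $k\in T_m$ and $k\le m$; for a gapped tail there are indices $k>m$ with $k\notin T_m$, and neither of your two cases covers them, so the claim $z\in B$ is unproved precisely for the sectors you need most. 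Third, condition \eqref{sep-cond} is invoked only descriptively (``it is exactly at this point that \eqref{sep-cond} enters''), never applied to a specific point $y$ at a specific step; since Theorem~\ref{BC:nonsep} shows separation genuinely fails without \eqref{sep-cond}, an argument in which that hypothesis is not consumed somewhere cannot be complete. As written, the proposal proves the theorem only in the regime where every full-tail pivot is available (and, after adjoining the downset witness with coefficient $1$, in the case $\nadx_1<1$); the prefix $k\le t(B)$ with $\nadx_1=1$, which is the substantial case, remains a plan rather than a proof.
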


The box $B$ can be a point and in this case condition \eqref{sep-cond}
always holds. Therefore, some results
on max-min semispaces
\cite{NS-08II} can be deduced from Theorem \ref{interval-sep}.
The following is an immediate corollary
of Theorem \ref{interval-sep} and Proposition
\ref{p:semisp-conv}.

\begin{corollary}[\cite{NS-08II}]
\label{ns-08ii}
Let $x\in\mmset^d$ be non-increasing and $C\subseteq\mmset^d$ be a max-min
convex set avoiding $x$. Then $C$ is contained in one $S_i(x), i\in I(p),$
as in Definition \ref{def:semi}. Consequently these
sets are indeed the family of semispaces at $x$.
\end{corollary}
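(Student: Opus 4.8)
The plan is to derive Corollary~\ref{ns-08ii} directly from Theorem~\ref{interval-sep} by specializing the box $B$ to the single point $x$. First I would observe that when $B=\{x\}$, i.e. $\podx_i=\nadx_i=x_i$ for all $i$, the hypothesis \eqref{sep-cond} is automatically satisfied: this is exactly the content of the remark following \eqref{sep-cond}, where the authors note that for a point with $\nadx_1=1$ one has $\nadx_l=1$ for all $l\le t(B)$, making the triggering condition $\nadx_l<y_l$ impossible. Since $x$ is assumed non-increasing, the ordering requirement $\nadx_1\ge\ldots\ge\nadx_d$ needed to define $t(B)$ holds with $\nadx_i=x_i$, so Theorem~\ref{interval-sep} applies verbatim.

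Next I would run the theorem: given a max-min convex set $C$ avoiding the point $x$, Theorem~\ref{interval-sep} yields a set $S$ from Definition~\ref{def:semi} that contains $C$ and avoids $x$. By the first part of Theorem~\ref{p:semisp-conv}, the sets $S_i(x)$ for $i\in I(x)$ are precisely the candidates in Definition~\ref{def:semi}, and they are maximal max-min convex sets avoiding $x$. Hence the separating set $S$ is one of the $S_i(x)$ with $i\in I(x)$, giving $C\subseteq S_i(x)$, which is the first assertion of the corollary.

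For the concluding claim that these sets are \emph{exactly} the family of semispaces at $x$, I would argue in two directions. Theorem~\ref{p:semisp-conv} already establishes that each $S_i(x)$, $i\in I(x)$, is a maximal max-min convex set avoiding $x$, hence a semispace. Conversely, suppose $T$ is any semispace at $x$, i.e. a maximal max-min convex set not containing $x$. Applying the first part of the corollary to $C=T$ produces some $S_i(x)$ with $T\subseteq S_i(x)$ and $x\notin S_i(x)$; by maximality of $T$ this inclusion must be an equality, so $T=S_i(x)$. Thus every semispace at $x$ coincides with one of the listed sets, and the family $\{S_i(x)\mid i\in I(x)\}$ is exactly the collection of semispaces at $x$.

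The main obstacle is essentially bookkeeping rather than conceptual: one must check carefully that specializing to a point genuinely meets every hypothesis of Theorem~\ref{interval-sep}, in particular that $t(B)$ is well defined and that \eqref{sep-cond} degenerates correctly. Once that verification is in place the result follows formally. A secondary subtlety is the reduction to the non-increasing case, but since the corollary already assumes $x$ non-increasing, no permutation argument via $\overline{\pi}$ is needed here; the general-position version would require invoking the extension of Definition~\ref{def:semi} through $\overline{\pi}$, but that lies outside the stated hypotheses.
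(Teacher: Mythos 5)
Your proposal is correct and follows essentially the same route as the paper, which presents this result as an immediate consequence of Theorem~\ref{interval-sep} (specialized to a one-point box, where condition~\eqref{sep-cond} holds automatically) combined with the maximality statement of Theorem~\ref{p:semisp-conv}. Your additional verification that every semispace at $x$ must equal some $S_i(x)$ by maximality is exactly the intended content of the concluding sentence of the corollary.
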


However, separation by semispaces is
impossible when $B, C$ do not satisfy
\eqref{sep-cond}.

\begin{theorem}[\cite{N-Ser2}, Theorem~2]
\label{BC:nonsep}
Suppose that
$B=[\podx_1,\nadx_1]\times\ldots\times[\podx_d,\nadx_d]\subseteq \mmset^d$
and the max-min convex set $C\subseteq\mmset^d$ are such that
$B\cap C=\emptyset$ but the condition \eqref{sep-cond} does not
hold. Then there is no semispace that contains $C$ and
avoids $B$.
\end{theorem}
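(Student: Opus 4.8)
The plan is to show that the failure of \eqref{sep-cond} pins a point of $C$ inside \emph{every} sector that contains $B$, so that no complement of a semispace can simultaneously contain $B$ and avoid $C$. First I would restate the problem in terms of sectors: a set $S$ from Definition~\ref{def:semi} contains $C$ and avoids $B$ precisely when its complement $\complement S$ contains $B$ and is disjoint from $C$. Reading off Definition~\ref{def:semi} (after the harmless permutation making the relevant centre $z$ non-increasing), each of the tail index sets appearing there is exactly $\{i : z_i<z_m\}$, so the sectors take the single transparent shape
\[
\complement S_m(z)=\{x\in\mmset^d : x_m\ge z_m\ \text{and}\ x_i\le z_i\ \text{for all } i\ \text{with } z_i<z_m\},\qquad m\ge 1,
\]
together with $\complement S_0(z)=\{x : x\le z\}$; and, by the description of $I(z)$ recorded after Definition~\ref{def:semi}, $\complement S_m(z)$ is admissible only for $z_m>0$ when $m\ge1$, while $\complement S_0(z)$ is admissible only when no coordinate of $z$ equals $1$.

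Next I would fix the witness $y\in C$ guaranteed by the negation of \eqref{sep-cond}: one has $\nadx_1=1$, $y\ge\podx$ coordinatewise, and $\nadx_l<y_l$ for some $l\le t(B)$. Suppose, towards a contradiction, that some admissible sector $\Sigma=\complement S_m(z)$ satisfies $B\subseteq\Sigma$ and $\Sigma\cap C=\emptyset$. The case $m=0$ is eliminated at once, for $B\subseteq\{x\le z\}$ would force $z_1\ge\nadx_1=1$, putting a coordinate equal to $1$ into $z$ and hence, by the description of $I(z)$, removing $\complement S_0(z)$ from the admissible sectors. For $m\ge1$, the containment $B\subseteq\Sigma$ is equivalent to $z_m\le\podx_m$ together with $z_i\ge\nadx_i$ for every $i$ with $z_i<z_m$. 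Since $y_m\ge\podx_m\ge z_m$, the single lower-bound constraint defining $\Sigma$ is automatically met by $y$; as $y\in C$ must nevertheless avoid $\Sigma$, some upper-bound constraint is violated, i.e.\ there is an index $i^\ast$ with $z_{i^\ast}<z_m$ and $y_{i^\ast}>z_{i^\ast}$, whence $\nadx_{i^\ast}\le z_{i^\ast}<z_m\le\podx_m$ and in particular $\nadx_{i^\ast}<\podx_m$.

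The remaining step, and the one I expect to be the main obstacle, is to turn $\nadx_{i^\ast}<\podx_m$ into an outright contradiction by locating the indices $m$, $i^\ast$ and $l$ relative to $t(B)$ and colliding with the maximality built into $t(B)$, namely $\nadx_{t(B)}\ge\podx_i$ for all $i\le t(B)$. The inequalities already force $i^\ast>m$: the normalization $\nadx_1=1$ prevents the first coordinate from ever carrying an upper bound inside a sector containing $B$ (that would demand $z_1\ge1$ and $z_1<z_m\le1$ at once), and more generally every $i\le m$ satisfies $z_i\ge z_m$, so a violated upper bound can only occur beyond $m$. What must then be established is that the active index and the violated index are squeezed into the range controlled by $t(B)$, so that this maximality and the monotonicity $\nadx_1\ge\cdots\ge\nadx_d$ clash with the strict inequality $\nadx_{i^\ast}<\podx_m$; here the overshoot coordinate $l\le t(B)$ must be used essentially, through $\nadx_l<y_l$ and $y\ge\podx$, to show that the forced overshoot cannot be absorbed within the first $t(B)$ coordinates. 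Pinning down these index relations uniformly over the equal blocks and the strictly decreasing blocks of Definition~\ref{def:semi} is the combinatorial heart of the argument; everything around it is routine max-min inequality manipulation. Since every admissible $m\ge1$ is thereby excluded, and $m=0$ was excluded above, no semispace can contain $C$ and avoid $B$, which is the claim. I would also note that this statement is the exact converse of Theorem~\ref{interval-sep}, so an equivalent strategy is to verify that the separating construction proving Theorem~\ref{interval-sep} is tight and breaks down precisely when \eqref{sep-cond} fails.
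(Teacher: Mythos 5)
You should first note that the survey does not actually prove this statement---it is quoted from \cite{N-Ser2}, Theorem~2---so there is no in-paper argument to measure your proposal against, and it must stand on its own. Your reduction is sound as far as it goes: the unified description of the sectors as $\complement S_m(z)=\{x\colon x_m\ge z_m \text{ and } x_i\le z_i \text{ for all } i \text{ with } z_i<z_m\}$ is a correct reading of Definition~\ref{def:semi}, the elimination of $m=0$ via $\nadx_1=1$ is correct, and so is the chain $\nadx_{i^\ast}\le z_{i^\ast}<z_m\le\podx_m$ extracted from a hypothetical sector that contains $B$ and misses the witness $y$. The problem is precisely the step you flag as ``the main obstacle'': it is not a routine index chase, and it cannot be completed from the ingredients you have assembled. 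The inequality $\nadx_{i^\ast}<\podx_m$ collides with the maximality defining $t(B)$ only when both $m\le t(B)$ and $i^\ast\le t(B)$; you correctly show $i^\ast>m$, but nothing forces $i^\ast\le t(B)$, and when $i^\ast>t(B)$ the definition of $t(B)$ says nothing about $\nadx_{i^\ast}$ versus $\podx_m$.

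This is not a gap that more careful bookkeeping will close. Take $d=3$, $\nadx=(1,\,0.8,\,0.2)$, $\podx=(0.5,\,0.5,\,0.1)$, so that $t(B)=2$, and let $C=\{y\}$ with $y=(0.5,\,0.9,\,0.5)$. Then $y\ge\podx$ and $\nadx_2<y_2$ with $2\le t(B)$, so the hypotheses of your argument are all in force; yet the sector $\complement S_1(z)=\{x\colon x_1\ge 0.5,\ x_3\le 0.3\}$ at $z=(0.5,\,0.5,\,0.3)$ contains $B$ and misses $y$, with $m=1$ and $i^\ast=3>t(B)$. So the key claim underlying your strategy---that the single witness $y$ lies in every admissible sector containing $B$---fails, and since $C=\{y\}$ is itself max-min convex there is no other point of $C$ to fall back on. The conclusion is that either the transcription of condition \eqref{sep-cond} in this survey omits a constraint on the behaviour of $y$ beyond coordinate $t(B)$ (the original formulation and proof in \cite{N-Ser2} must be consulted), or an argument of a genuinely different shape is required; in either case the proposal, which defers exactly this point, does not prove the theorem as stated.
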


In~\cite{N-Ser2} we also investigate the separation of max-min convex sets by a box,
and by a box and a semispace. We show that both kinds of separation are always possible if $n=2$,
but they are not valid in higher dimensions.

\section{Carath\'eodory theorems}\label{s:carath}

In this section we investigate classical convexity results in max-min setup.

\begin{theorem}[Carath\'eodory's theorem] Consider $X=\{x^1,x^2,\dots,x^m\}\subseteq \mmset^d,$ $m\ge d+1.$ Assume that $p\in \conv(X)$. Then there exists $X'=\{x'^i\vert i\in I\}\subseteq X,$ $1\le \vert I \vert \le d+1,$ such that $p\in \conv(X').$
\end{theorem}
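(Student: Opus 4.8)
The plan is to derive this directly from the Multiorder Principle (Lemma~\ref{basic-tool}), which converts membership in a convex hull into a finite list of ``covering'' conditions indexed by the semispaces at $p$. The crucial point is that, by Theorem~\ref{p:semisp-conv}, there are at most $d+1$ semispaces $S_i(p)$ at any point $p$, so the number of conditions to be met is bounded by $d+1$; this is exactly what will force a subfamily of size at most $d+1$. (The hypothesis $m\ge d+1$ only serves to make the statement non-vacuous: if $m\le d+1$ one may simply take $X'=X$.)

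First I would invoke the forward implication (i)~$\to$~(ii) of Lemma~\ref{basic-tool}. Since $p\in\conv(X)$, for each index $i\in I(p)$ there is a witness $x^{(i)}\in X$ lying in the complementary sector $\complement S_i(p)$. I then set $X'=\{x^{(i)}\mid i\in I(p)\}$, selecting one witness per index (a finite choice). By Theorem~\ref{p:semisp-conv} we have $|I(p)|\le d+1$, and $I(p)$ is never empty, so $X'$ is a nonempty subset of $X$ with $1\le |X'|\le d+1$. Note that a single point of $X$ may serve as a witness for several indices at once, so the cardinality could be strictly smaller; this only strengthens the bound.

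Second, I would apply the reverse implication (ii)~$\to$~(i) of Lemma~\ref{basic-tool}, now with the finite set $X'$ in place of $X$. For every $i\in I(p)$ the point $x^{(i)}$ belongs to $X'\cap\complement S_i(p)$ by construction, so condition (ii) is satisfied for $X'$. Lemma~\ref{basic-tool} then yields $p\in\conv(X')$, which is the desired conclusion, with $I$ taken to index the chosen witnesses.

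Since the Multiorder Principle does all the heavy lifting, I expect no genuine obstacle beyond bookkeeping: the one thing to verify carefully is the cardinality estimate, and this rests entirely on the structural fact that there are at least one and at most $d+1$ semispaces at $p$. In contrast to the classical Carath\'eodory argument, no rank or dimension reduction on the convex combinations is required; the bound $d+1$ is obtained from the classification of semispaces in Definition~\ref{def:semi} rather than from manipulating the combinations~\eqref{convX} themselves.
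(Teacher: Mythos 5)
Your argument is correct and is essentially identical to the paper's own proof: both apply the implication (i)~$\to$~(ii) of the Multiorder Principle to pick one witness $x'^i\in X\cap\complement S_i(p)$ for each $i\in I(p)$, set $X'$ to be this collection of at most $d+1$ witnesses, and then apply (ii)~$\to$~(i) to conclude $p\in\conv(X')$. Your additional remarks on the cardinality bound and the non-vacuousness of $m\ge d+1$ are accurate but not needed beyond what the paper already records.
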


\begin{proof} By Lemma \ref{basic-tool}, implication (i) $\rightarrow$ (ii), $p\in \conv(X)$ shows that for any $i\in I(p)$ there exists $x'^i\in X\cap \complement S_i(p)$. Define $X'=\{x'^i\vert i\in I(p)\}\subseteq X$. Then again by Lemma \ref{basic-tool}, now implication (ii) $\rightarrow$ (i), it follows that $p\in \conv(X').$
\end{proof}

\begin{theorem}[Colorful Carath\'eodory's theorem-weak form]\label{cara-color-weak} Let $X^0, X^1, \dots,$ $X^{d}$ be subsets in $\mmset^d$ and $p\in \mmset^d$. Assume that $p\in \conv(X^i)$ for all $0\le i\le d$. Then, up to a permutation of indices, there exist $x^i\in X^i,i\in I(p),$ such that $p\in \conv(\{x^i\vert i\in I(p)\}).$
\end{theorem}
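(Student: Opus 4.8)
The plan is to derive the statement directly from the Multiorder Principle (Lemma~\ref{basic-tool}), recasting the ``colorful'' selection as a choice of distinct representatives among the colour classes.

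First I would apply the implication (i)$\rightarrow$(ii) of Lemma~\ref{basic-tool} to each set $X^j$ separately. Since $p\in\conv(X^j)$ for every $j\in\{0,\ldots,d\}$, the lemma produces, for that fixed $j$ and each $i\in I(p)$, a point $x^{j,i}\in X^j$ lying in $\complement S_i(p)$. Thus for \emph{every} pair $(i,j)$ with $i\in I(p)$ and $0\le j\le d$ the intersection $X^j\cap\complement S_i(p)$ is nonempty.

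Next I would read this off as a bipartite selection problem: the left vertices are the indices $i\in I(p)$, the right vertices are the colour classes $j\in\{0,\ldots,d\}$, and an edge joins $i$ to $j$ exactly when $X^j\cap\complement S_i(p)\ne\emptyset$. By the previous step every edge is present, so this bipartite graph is complete. By Theorem~\ref{p:semisp-conv} we have $|I(p)|\le d+1$, i.e.\ at most as many indices as there are colours, so I can pick an injection $i\mapsto j(i)$ assigning to the indices of $I(p)$ pairwise distinct colours, together with representatives $x^i:=x^{j(i),i}\in X^{j(i)}\cap\complement S_i(p)$. Extending $i\mapsto j(i)$ to a permutation of $\{0,\ldots,d\}$ and relabelling the sets accordingly is precisely the ``permutation of indices'' in the statement, after which $x^i\in X^i$ and $x^i\in\complement S_i(p)$ for all $i\in I(p)$. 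Finally, applying the converse implication (ii)$\rightarrow$(i) of Lemma~\ref{basic-tool} to the finite set $\{x^i\mid i\in I(p)\}$ gives $p\in\conv(\{x^i\mid i\in I(p)\})$, which is what is claimed.

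I do not expect a genuine analytic obstacle here: the force of the Multiorder Principle is that membership $p\in\conv(X^j)$ is equivalent to all the relevant sectors $\complement S_i(p)$ meeting $X^j$, and this makes the selection graph complete so that the colorful matching is automatic. The only points requiring care are bookkeeping ones --- checking via Theorem~\ref{p:semisp-conv} that $|I(p)|$ never exceeds the number $d+1$ of colours, so that distinct representatives exist, and matching the informal phrase ``up to a permutation of indices'' to the relabelling that turns the assignment $i\mapsto j(i)$ into the diagonal. This triviality of the matching is exactly why only the \emph{weak} form is obtained; the strong form should instead require a selection argument in which the graph is not complete and internal separation is needed.
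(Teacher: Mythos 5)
Your proposal is correct and is essentially the paper's own argument: the paper likewise applies Lemma~\ref{basic-tool} (i)$\rightarrow$(ii) to each $X^j$ to get $x^i_j\in X^i\cap\complement S_j(p)$ for every pair, then simply takes the diagonal choice $x^i:=x^i_i$ for $i\in I(p)$ and concludes with (ii)$\rightarrow$(i). Your bipartite-matching framing is just a more explicit way of noting that this diagonal selection is available because the selection graph is complete and $|I(p)|\le d+1$.
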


\begin{proof} From Lemma \ref{basic-tool}, implication (i) $\rightarrow$ (ii), it follows that there exist $x^{i,j}\in X^i, 1\le i \le d+1, j\in I(p),$ such that $x^{i,j}\in \complement S_j(p),j\in I(p)$. Then again from Lemma~\ref{basic-tool}, implication (ii) $\rightarrow$ (i), and from $x^i:=x^{i,i}\in  \complement S_i(p), i\in I(p),$ it follows that $p\in \conv(\{x^i\vert i\in I(p)\}).$
\end{proof}

\begin{lemma}\label{l:sepincl} Let $p,q\in \mmset^d$. Then for all $i\in I(q)$ there exists $j\in I(p)$ such that $\complement S_j(p)\subseteq \complement S_i(q)$.
\end{lemma}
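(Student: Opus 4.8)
The plan is to reduce the statement to a single application of the separation half of Theorem~\ref{p:semisp-conv}. First I would rewrite the target inclusion by complementation: $\complement S_j(p)\subseteq\complement S_i(q)$ is equivalent to $S_i(q)\subseteq S_j(p)$. So, fixing $i\in I(q)$, the goal becomes to exhibit a semispace $S_j(p)$ at $p$ that \emph{contains} the semispace $S_i(q)$ at $q$.

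The key structural input is that $S_i(q)$ is not merely a set but a max-min convex set: by Theorem~\ref{p:semisp-conv} it is a semispace, hence in particular convex. I would then feed $C=S_i(q)$ into the separation statement of Theorem~\ref{p:semisp-conv}. Provided $p\notin S_i(q)$, that theorem produces an index $j\in I(p)$ with $S_i(q)\subseteq S_j(p)$ and $p\notin S_j(p)$; complementing this inclusion yields $\complement S_j(p)\subseteq\complement S_i(q)$, which is exactly the required conclusion. No explicit manipulation of the defining inequalities of the semispaces is needed for this last step.

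The whole argument therefore hinges on the membership $p\in\complement S_i(q)$ (equivalently $p\notin S_i(q)$), and this is the step I expect to be the main obstacle. It is in fact forced: since a semispace never contains its own base point, $p\in\complement S_j(p)$ for every $j\in I(p)$, so \emph{any} inclusion $\complement S_j(p)\subseteq\complement S_i(q)$ already entails $p\in\complement S_i(q)$. To secure this membership I would appeal to the explicit description of the sectors coming from Definition~\ref{def:semi}: each $\complement S_i(q)$ is cut out by at most one inequality of the form $x_m\ge q_m$ together with a block of inequalities $x_l\le q_l$ running over a tail of indices determined by $i$. Checking $p\in\complement S_i(q)$ thus reduces to verifying this finite system of coordinate inequalities, read off from the relative order of the coordinates of $p$ and $q$ and from the block data $k_j,l_j$ attached to $q$. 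Once that membership is in hand, the separation theorem closes the argument at once.
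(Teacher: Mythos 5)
Your reduction is exactly the paper's: rewrite the claim as $S_i(q)\subseteq S_j(p)$ and apply the separation half of Theorem~\ref{p:semisp-conv} to the convex set $C=S_i(q)$, which requires $p\notin S_i(q)$. You are also right to single out the membership $p\in\complement S_i(q)$ as the crux of the matter: since $p\in\complement S_j(p)$ for every $j\in I(p)$, the asserted inclusion forces that membership.

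The gap is in your final step. You propose to \emph{verify} $p\in\complement S_i(q)$ from the coordinate description in Definition~\ref{def:semi}, but this membership genuinely fails for some triples $(p,q,i)$, so no such verification can succeed. Take $d=1$, $q=1/2$, $p=7/10$, $i=0$: then $\complement S_0(q)=[0,1/2]$, while $\complement S_0(p)=[0,7/10]$ and $\complement S_1(p)=[7/10,1]$, and neither is contained in $[0,1/2]$. Thus Lemma~\ref{l:sepincl} as literally stated is false; it needs the additional hypothesis $p\notin S_i(q)$, under which your (and the paper's) one-line argument via Theorem~\ref{p:semisp-conv} is complete. The paper's own proof silently assumes this hypothesis as well, and it does hold in the only place the lemma is invoked, namely inside the proof of Theorem~\ref{t:cara-color-strong}, where the internal separation point $q$ produced by Theorem~\ref{t:intsep} satisfies $p^i\in\complement S_i(q)$ by construction. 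So your instinct about where the difficulty lies was correct; the repair is to add $p\in\complement S_i(q)$ as a hypothesis rather than to try to prove it in general.
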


\begin{proof} The statement is equivalent to $S_i(q)\subseteq S_j(p)$. This follows from the fact that the convex set $S_i(q)$ has to be included in a semispace at $p$.
\end{proof}

We now explain the concept of internal separation property, in the
max-min setting. The proof of internal separation property is
deferred to the next section.

\begin{definition}
Given $X=\{x^0,\ldots,x^{d}\}\subseteq\mmset^d$, we say that a
finite point $p\in \conv(X)$ internally separates $x^0,\ldots,x^d$
if up to a permutation, each semispace $S_i(p), 0\le i \le d,$
corresponds to $x^i\in\complement S_i(p)$.
\end{definition}

\begin{theorem}
\label{t:intsep} For any subset
$X=\{x^0,\ldots,x^{d}\}\subseteq\mmset^d$, consisting of finite
points, $\conv(X)$ contains a point $p$ with internal separation
property.
% with all coordinates different, away from the boundary of $\mmset^n$, and with no common coordinates for two different points,
\end{theorem}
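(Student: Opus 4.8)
The plan is to reduce the internal separation property to a bipartite matching (system of distinct representatives) condition, and then to locate a suitable base point $p$ by a topological fixed-point/Sperner-type argument. First I would make the sectors at a finite point explicit. When $p$ is finite we have $I(p)=\{0,1,\ldots,d\}$, so by Theorem \ref{p:semisp-conv} there are exactly $d+1$ semispaces, and from Definition \ref{def:semi} one reads off, after sorting $p$ non-increasingly, that the sectors are given by coordinatewise comparisons with $p$. In the case of distinct coordinates $p_1>\cdots>p_d$ this reads
\[
\complement S_0(p)=\{x:x_i\le p_i\ \forall i\},\qquad \complement S_q(p)=\{x:x_q\ge p_q,\ x_i\le p_i\ \forall i>q\}\ (q\ge1),
\]
while ties among the coordinates of $p$ replace some of these by half-space-like regions, exactly as prescribed by the blocks $k_j,l_j$ in Definition \ref{def:semi}. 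Internal separation then says precisely that the bipartite graph on the points $x^0,\ldots,x^d$ and the sectors $\complement S_0(p),\ldots,\complement S_d(p)$, joined whenever $x^j\in\complement S_i(p)$, has a perfect matching. I would also record the slack that drives the argument: a semispace at $p$ avoids $p$, hence $p\in\complement S_i(p)$ for every $i$, so points near $p$ are adjacent to many sectors.

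Next I would isolate the two structural inputs. By the multiorder principle (Lemma \ref{basic-tool}), for every finite $p\in\conv(X)$ each sector $\complement S_i(p)$ already contains some $x^j$; thus every sector-vertex of the matching graph is covered, and the only obstruction to a perfect matching is a violation of Hall's condition. Such violations genuinely occur for careless choices of $p$ (for example $p=\bigoplus_j x^j$ can route the maxima of several coordinates through a single point, as one checks on three points clustered in one corner plus a far point), so $p$ must be chosen with care — in particular the example shows the required $p$ may have \emph{equal} coordinates, so one cannot restrict to generic $p$. This confirms the theorem is not a formal consequence of Carath\'eodory. The inclusion Lemma \ref{l:sepincl}, relating sectors at different base points, is what lets me control how the matching graph degenerates as $p$ moves inside $\conv(X)$.

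The core step is to produce a finite $p\in\conv(X)$ at which Hall's condition holds. I would parametrize $\conv(X)$ by the continuous surjection $\lambda\mapsto\bigoplus_j\lambda_j\otimes x^j$ from the contractible parameter region $\{\lambda\in\mmset^{d+1}:\bigoplus_j\lambda_j=1\}$, triangulate it, and assign to each vertex a label in $\{0,\ldots,d\}$ recording a sector that is Hall-deficient (``over-demanded'') at the corresponding $p$; equivalently one can seek a fixed point of a centering map built from the current sector memberships. The goal is that a fully colored (rainbow) cell, produced by a Sperner/KKM argument, pins down a $p$ at which no Hall-blocking set of sectors survives, i.e. a $p$ admitting a system of distinct representatives. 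The degeneracies — ties among the coordinates of $p$, where the sector shapes change, and memberships on sector boundaries — are absorbed using the closedness of the sets $\complement S_i(p)$ together with a compactness/limiting argument passing from perturbed configurations to the desired point.

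The main obstacle will be the Sperner boundary (``properness'') condition for the labeling: one must show that on each lower-dimensional face of the parameter region the admissible labels are suitably restricted, so that the combinatorics forces a rainbow cell rather than allowing the deficiency to persist everywhere. This is exactly where the explicit description of max-min sectors at finite points, the inclusion Lemma \ref{l:sepincl}, and the central slack $p\in\complement S_i(p)$ for all $i$ are indispensable: the latter guarantees enough edges near the ``interior'' of $\conv(X)$ to dissolve every potential Hall obstruction, while the former two pin down the label transitions across faces. Verifying this consistency uniformly across all faces and all degenerate coordinate patterns is the technical heart of the proof, and I expect it to be the step that requires the most work.
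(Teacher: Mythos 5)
Your reduction of internal separation to a perfect-matching (Hall / system-of-distinct-representatives) condition between the points $x^0,\ldots,x^d$ and the sectors $\complement S_0(p),\ldots,\complement S_d(p)$ is correct, and your observation that a carelessly chosen $p$ (e.g.\ $p=\bigoplus_j x^j$) can violate Hall's condition is genuine and relevant. But the proposal does not prove the theorem: the entire construction of $p$ is delegated to a Sperner/KKM argument whose decisive ingredients are left unspecified. Concretely: (1) the labeling of a triangulation vertex by ``a Hall-deficient sector'' is not well defined --- when Hall's condition fails several sectors may witness the deficiency, when it holds there is no label at all (and at such a vertex you are already done), and nothing is said about how the label is chosen so that a rainbow cell carries usable information; (2) the Sperner boundary condition on the faces of the parameter region $\{\lambda\colon\bigoplus_j\lambda_j=1\}$ is declared to be ``the technical heart'' but is not verified, and it is far from clear that it holds, since the map $\lambda\mapsto\bigoplus_j\lambda_j\otimes x^j$ is highly non-injective and the sector pattern at $p(\lambda)$ changes discontinuously as coordinates of $p$ collide; (3) even granting a rainbow cell, the limiting step is missing: all $d+1$ labels appearing on the vertices of a small cell says each sector is deficient \emph{somewhere nearby}, which is not obviously the negation of a Hall obstruction at the limit point, and closedness of the sectors only preserves memberships, not the absence of blocking sets. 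The argument therefore stops exactly where the difficulty begins.

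For contrast, the paper's proof is purely combinatorial and constructive. It arranges the $d+1$ points as rows of $A\in\mmset^{(d+1)\times d}$, takes the largest threshold $t$ for which $A^{(t)}$ still has a $d\times d$ submatrix with nonzero permanent, and applies the K\"onig theorem to obtain a diagram $(B^{\leq t},\pi)$: a large all-$\leq t$ submatrix $A_{M_1N_1}$ together with a partial permutation with entries $\geq t$. The sinking/lifting lemmas upgrade this to a \emph{tight} diagram, and the separating point is then built by induction on dimension: the coordinates indexed by $N_1$ are set to $t$, and the remaining coordinates come from the inductive separating point of the smaller matrix $A_{\tilde{M}_1N_2}$. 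In effect, the matching-theoretic content you want to extract topologically is obtained there directly from K\"onig's theorem, together with an explicit $p$; if you wish to salvage your route, the missing labeling and boundary analysis would have to encode essentially this same threshold/permanent structure.
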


We will need yet another simple observation, to obtain the colorful
Carath\'eodory theorem in most general form. Let $\mmseto$ be a
closed interval on the real line strictly containing $\mmset=[0,1]$, and
denote by $\underline{0}$, resp. $\overline{1}$ the least, resp. the
greatest element of $\mmseto$. We have
$\underline{0}<0<1<\overline{1}$, and we can define the max-min
semiring over $\mmseto$ with zero $\underline{0}$ and unity
$\overline{1}$. For $X\subseteq\mmset^d$, denote by $\convo(X)$ the
max-min convex hull of $X$ in $\mmseto^d$.

\begin{lemma}
\label{l:extend} For any $X\subseteq\mmset^d$, we have
$\convo(X)=\conv(X)$.
\end{lemma}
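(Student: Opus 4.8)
The plan is to work directly from the description of the convex hull as the set of all max-min convex combinations recorded in~\eqref{convX}, applied over both semirings. Thus $\conv(X)$ is the set of points $\bigoplus_i \lambda_i\otimes x^i$ with $x^i\in X$, $\lambda_i\in\mmset$ and $\bigoplus_i\lambda_i=1$, while $\convo(X)$ is the set of the analogous combinations with $\lambda_i\in\mmseto$ and $\bigoplus_i\lambda_i=\overline{1}$. Everything will hinge on one arithmetic observation: for a coefficient $\lambda\in\mmseto$ and an entry $t\in\mmset=[0,1]$ one has $\min(\lambda,t)=t$ whenever $\lambda\geq 1$ (in particular for $\lambda=\overline{1}$), and $\min(\lambda,t)=\lambda<0$ whenever $\lambda<0$. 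Equivalently, writing $\mu:=\min(1,\max(0,\lambda))$ for the truncation of $\lambda$ to $[0,1]$, a short case check gives $\min(\mu,t)=\max(0,\min(\lambda,t))$ for every $t\in[0,1]$.

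For the inclusion $\conv(X)\subseteq\convo(X)$, I would take a combination realizing a point $z\in\conv(X)$, pick an index $i_0$ with $\lambda_{i_0}=1$ (it exists since $\bigoplus_i\lambda_i=1$), and replace \emph{every} coefficient equal to $1$ by $\overline{1}$, leaving the others unchanged. Because all entries $x^i_k$ lie in $[0,1]$, the observation above gives $\min(\overline{1},x^i_k)=\min(1,x^i_k)=x^i_k$, so every coordinate of the combination is unchanged; meanwhile the new coefficients satisfy $\bigoplus_i\lambda'_i=\overline{1}$, whence $z\in\convo(X)$.

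For the reverse inclusion $\convo(X)\subseteq\conv(X)$, I would start from a combination realizing $z\in\convo(X)$ with $\lambda_i\in\mmseto$, $\bigoplus_i\lambda_i=\overline{1}$, and replace each $\lambda_i$ by its truncation $\mu_i:=\min(1,\max(0,\lambda_i))\in[0,1]$. Using $\min(\mu_i,x^i_k)=\max(0,\min(\lambda_i,x^i_k))$ and pulling $\max(0,\cdot)$ through the outer $\bigoplus=\max$, the $k$-th coordinate of the truncated combination becomes $\max(0,z_k)$. But every $z\in\convo(X)$ has nonnegative coordinates, since the index with $\lambda_i=\overline{1}$ contributes $\min(\overline{1},x^i_k)=x^i_k\geq 0$; hence $\max(0,z_k)=z_k$ and the combination is unchanged. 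Finally, some $\lambda_i=\overline{1}$ truncates to $\mu_i=1$ while no $\mu_i$ exceeds $1$, so $\bigoplus_i\mu_i=1$ and $z\in\conv(X)$.

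The two substitution arguments are routine; the step I expect to need the most care is the reverse inclusion, specifically checking that truncating the possibly strictly negative coefficients to $0$ changes neither any coordinate nor the normalization. This is exactly the place where one must use that all generators $x^i$ have coordinates in $[0,1]$ and that the target point already has nonnegative coordinates, so that the degenerate case $z_k=0$ also comes out correctly.
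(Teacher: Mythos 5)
Your proof is correct and follows essentially the same route as the paper's: both directions are handled by adjusting coefficients (replacing $1$ by $\overline{1}$ and, conversely, truncating coefficients of $\mmseto$ into $[0,1]$), using that all coordinates of the generators lie in $[0,1]$ and that every point of $\convo(X)$ has nonnegative coordinates because some coefficient equals the unit. Your truncation identity $\min(\mu,t)=\max(0,\min(\lambda,t))$ merely packages the paper's two separate substitutions (unit down to $1$, negative coefficients up to $0$) into one step, and in doing so it also explicitly covers coefficients strictly between $1$ and $\overline{1}$, which the paper treats only implicitly.
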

\begin{proof}
The ``new'' convex hull $\convo(X)$ is the set of combinations
\begin{equation}
\label{convoX}
\bigoplus_{i=1}^m \lambda_i\otimes x^i\colon m\geq 1,\ \lambda_i\in\mmseto,\ \bigoplus_{i=1}^m \lambda_i=\overline{1},
\end{equation}
taken for all $m$-tuples of points $x^i$ from $X$.

To obtain $\conv(X)\subseteq\convo(X)$, observe that when
$\lambda_i=1$ in~\eqref{convX} is changed to
$\lambda_i=\overline{1}$ the ``product'' $\lambda_i\otimes x^i$ is
unaffected (since all components of $x^i$ are $\leq 1$). To show
$\convo(X)\subseteq\conv(X)$, use the same observation to change
$\lambda_i=\overline{1}$ to $\lambda_i=1$ in~\eqref{convoX}. Next,
no combination~\eqref{convoX} (now with $1$ instead of
$\overline{1}$) has any negative components since all $x^i$ are
nonnegative and there is a point with coefficient $\lambda_i=1$.
Hence all $\lambda_i\colon \underline{0}\leq\lambda_i<0$ can be
changed to $0$ without affecting~\eqref{convoX}. This completes the
proof.
\end{proof}

\begin{corollary}
\label{c:extend}
A max-min convex set $C\subseteq\mmset^d$ remains
max-min convex in $\mmseto^d$.
\end{corollary}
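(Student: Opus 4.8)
The plan is to deduce the statement directly from Lemma~\ref{l:extend}, using the standard fact that a set is convex precisely when it coincides with its own convex hull. First I would recall that, for any hull operator, a set $C$ is max-min convex if and only if $\conv(C)=C$: the forward direction holds because $C$ is then a convex set containing itself and hence the smallest such, and the reverse because $\conv(C)$ is by construction convex. The same equivalence holds verbatim in $\mmseto^d$, so that $C$ is max-min convex there if and only if $\convo(C)=C$.

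The key step is then a single application of Lemma~\ref{l:extend} with $X=C$. Since $C\subseteq\mmset^d$ is assumed max-min convex in $\mmset^d$, we have $\conv(C)=C$. Lemma~\ref{l:extend} gives $\convo(C)=\conv(C)$, and combining these yields $\convo(C)=C$. By the equivalence above, applied now in the extended semiring, this says exactly that $C$ is max-min convex in $\mmseto^d$, which is the claim.

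I expect no genuine obstacle here, as all the real work has been carried out in Lemma~\ref{l:extend}: the corollary is essentially a restatement of that lemma in the language of convex sets rather than convex hulls. The only points to check are bookkeeping: that the fixed-point characterization of convexity is valid in both semirings (it is, since it depends only on $\conv$, resp.\ $\convo$, being a hull operator), and that the hypothesis $X=C\subseteq\mmset^d$ of Lemma~\ref{l:extend} is met, which holds because $C$ is given as a subset of $\mmset^d$. One subtlety worth flagging is that $C$ need not be finitely generated, so the argument must use the description of the hull through all finite max-min convex combinations~\eqref{convX} and~\eqref{convoX}, exactly as in the proof of Lemma~\ref{l:extend}.
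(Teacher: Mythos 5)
Your argument is correct and matches the paper's intent exactly: the corollary is stated without proof precisely because it follows from Lemma~\ref{l:extend} applied with $X=C$, via the fixed-point characterization $\conv(C)=C$ of convexity, which is what you spell out. No gaps; the lemma indeed holds for arbitrary (not necessarily finite) $X\subseteq\mmset^d$, so the application is legitimate.
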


\begin{theorem}[Colorful Carath\'eodory's theorem]\label{t:cara-color-strong}
Let $X^0, X^1, \dots,$ $X^{d}\subseteq\mmset^d$, and $C\subseteq
\mmset^d$ be a max-min convex set. Assume that $C\cap \conv(X^i)\not
=\emptyset$ for all $0\le i\le d$.
%and, moreover:
%\begin{itemize}
%\item[(*)] one can select the points $p^i\in C\cap \conv(X^i), 0\le i\le d,$ such that the set $\conv(\{p^0,p^1,\dots,p^{d}\})$ has nonempty interior.
%\end{itemize}
Then there exist $x^i\in X^i, 0\le i\le d,$ such that $C\cap \conv(\{x^0,x^1,\dots,x^{d}\})\not =\emptyset.$
\end{theorem}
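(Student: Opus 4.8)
The plan is to reduce the colorful statement for the convex set $C$ to a single-point situation by producing one point $p\in C$ that simultaneously ``sees'' all the color classes, and then to apply the multiorder principle. First I would choose, for each $0\le i\le d$, a witness $p^i\in C\cap\conv(X^i)$; since $C$ is max-min convex and contains all the $p^i$, the whole hull $\conv(\{p^0,\ldots,p^d\})$ lies inside $C$. The key idea is to run the internal separation theorem on the $d+1$ points $p^0,\ldots,p^d$: it supplies a single point $p\in\conv(\{p^0,\ldots,p^d\})\subseteq C$ whose $d+1$ semispaces are matched, up to relabeling, to the witnesses, so that $p^i\in\complement S_i(p)$ for every $i$.

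There is a technical gap here, since Theorem~\ref{t:intsep} applies only to finite points and the $p^i$ may have coordinates $0$ or $1$. I would close this gap via the embedding of Lemma~\ref{l:extend}: passing to the strictly larger interval $\mmseto$, every $p^i\in\mmset^d$ becomes a finite point of $\mmseto^d$ (its coordinates lie strictly between $\underline 0$ and $\overline 1$), while Lemma~\ref{l:extend} guarantees $\convo(\{p^0,\ldots,p^d\})=\conv(\{p^0,\ldots,p^d\})$, so the point $p$ produced by internal separation in $\mmseto^d$ still lies in $\mmset^d$, is finite there, and has $I(p)=\{0,\ldots,d\}$; being in the hull it also lies in $C$ (Corollary~\ref{c:extend} ensures $C$ stays convex over $\mmseto$). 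Working over $\mmseto$ from now on, I would next pull each witness back from the hull to the class itself: since $S_i(p)$ is a semispace, hence convex, the inclusion $X^i\subseteq S_i(p)$ would force $\conv(X^i)\subseteq S_i(p)$, contradicting $p^i\in\conv(X^i)\cap\complement S_i(p)$; therefore each $X^i$ must contain a point $x^i\in\complement S_i(p)$.

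At this stage the multiorder principle (Lemma~\ref{basic-tool}, which holds verbatim over any max-min semiring and in particular over $\mmseto$, its proof resting only on Theorem~\ref{p:semisp-conv}) closes the argument: the selection $x^0,\ldots,x^d$ meets every sector $\complement S_i(p)$, $i\in I(p)$, so implication (ii)$\rightarrow$(i) gives $p\in\convo(\{x^0,\ldots,x^d\})$, which by Lemma~\ref{l:extend} equals $\conv(\{x^0,\ldots,x^d\})$. Since $p\in C$, this yields $C\cap\conv(\{x^0,\ldots,x^d\})\neq\emptyset$, as required; the relabeling coming from internal separation is harmless because the desired conclusion is symmetric in the colors. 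I expect the main obstacle to be precisely the finiteness issue addressed in the middle paragraph: one must verify that the internal separation theorem, the multiorder principle, and the convexity of semispaces all transfer faithfully to $\mmseto^d$, and that the extended hull does not escape $\mmset^d$, so that the point $p$ produced abstractly over $\mmseto$ is a genuine element of $C\subseteq\mmset^d$.
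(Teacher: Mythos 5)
Your proposal is correct and follows essentially the same route as the paper: pick witnesses $p^i\in C\cap\conv(X^i)$, apply the internal separation theorem (Theorem~\ref{t:intsep}) to obtain a point $p\in\conv(\{p^0,\ldots,p^d\})\subseteq C$ with $p^i\in\complement S_i(p)$, handle non-finite points via the $\mmseto$-embedding of Lemma~\ref{l:extend} and Corollary~\ref{c:extend}, and finish with the multiorder principle. The only (harmless, and arguably cleaner) deviation is that where the paper invokes Lemma~\ref{l:sepincl} together with the multiorder principle to extract $x^i\in X^i\cap\complement S_i(p)$, you obtain the same witness directly from the maximal-convexity of the semispace $S_i(p)$: if $X^i\subseteq S_i(p)$ then $\conv(X^i)\subseteq S_i(p)$, contradicting $p^i\in\complement S_i(p)$.
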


\begin{proof} Assume first that all points in $X^0,X^1,\ldots, X^d$ are finite.
Take $p^i\in C\cap \conv(X^i), 0\le i\le d.$ By
Theorem~\ref{t:intsep} we can select a point $q$ which separates
$p^0,p^1,\dots,p^d$ internally, thus $p^i\in\complement S_i(q)$ for
all $i$. As $p^i\in C, 0\le i\le d,$ by Lemma \ref{basic-tool} one
has also $q\in C$. It remains to show that $q\in \conv(\{x^0,x^1,
\dots, x^{d}\})$, with some $x^i\in X^i, 0\le i\le d.$

By Lemma \ref{l:sepincl}, for any $0\le i\le d$, there exists $0\le
j\le d$ such that $\complement S_j(p^i)\subseteq \complement
S_i(q)$. As $p^i\in \conv(X^i)$, by Lemma \ref{basic-tool} there
exists $x^i\in X_i\cap \complement S_j(p_i)$. Hence $x^i\in
\complement S_i(q)$. Hence again by Lemma \ref{basic-tool} one has
$q\in \conv(\{x^0,x^1, \dots, x^{d}\})$. This proves the claim under
assumption that $X^0,X^1,\ldots, X^d$ have only finite points.

Without that assumption, regard $X^0, X^1, \dots,$
$X^{d},C\in\mmset^d$ as subsets of $\mmseto^d$ where $\mmseto$ is a
closed interval strictly containing $\mmset$. By
Corollary~\ref{c:extend}, $C$ remains max-min convex in $\mmseto^d$,
and by Lemma~\ref{l:extend} none of the convex hulls in the claim
change when they are considered in $\mmseto^d$. This extension makes
all points in $X^0,X^1,\ldots, X^d$ finite, and the previous
argument works in $\mmseto^d$ (with sectors in $\mmseto^d$).
\end{proof}

%\begin{remark} There are several instances when we  Theorem~\ref{t:cara-color-strong} is valid without assumption (*):
%\begin{enumerate}
%\item The set $C$ reduces to a point.  This case follows from Theorem~\ref{cara-color-weak}.
%\item One can choose the points $p^i\in X^i, 0\le i\le d,$ to have a fixed ordering of their coordinates, say
%$p^i_1\ge  p^i_2\ge\ldots\ge p^i_d, 0\le i\le d$, and finite.
%This case follows from Proposition~\ref{propo-same-order}.
%\item The dimension $d$ is $2$. This case follows from Proposition~\ref{propo-dim2}.
%\end{enumerate}
%\end{remark}

We conclude the section with the proof of internal separation
property in the cases when 1) $\conv(X)$ has a non-empty interior,
2) all vectors $p^{\ell}$ are non-increasing. These proofs can be
skipped by the reader, who can proceed to a general proof of
Theorem~\ref{t:intsep} written in the next section.
%(Proposition~\ref{propo-same-order}) and in the case of dimension
%$2$.

Let us introduce the notion of interior of a max-min convex set.
%This notion is further developed in the next section, where we
%introduce and study the notion of dimension of max-min convex sets.

\begin{definition}
Interior of a max-min convex set $C\in\mmset^d$, denoted by
$\inter(C)$ is the subset of $C$ consisting of points $y$ such that
there is an open $d$-dimensional box
$(y_1-\epsilon,y_1+\epsilon)\times\cdots\times(y_d-\epsilon,y_d+\epsilon)\subseteq C$
%$B_y^{\epsilon}\subseteq C$,
for some $\epsilon>0$.
\end{definition}

%The following two lemmas are needed for a generalization of Theorem~\ref{cara-color-weak}.
\begin{proposition}\label{p:inter} Assume $X=\{x^0,x^1,\dots, x^{d}\}\subseteq \mmset^d$ generates a max-min
polytope $S=\conv(X)$ with non-empty interior. Then for any point
$p\in \inter(S)$ with all coordinates different, up to a permutation
of indices, one has $x^i\in \complement S_i(p), i\in I(p)$.
\end{proposition}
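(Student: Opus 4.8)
The goal is to show that when $p\in\inter(S)$ has all coordinates distinct, we can match up each semispace $S_i(p)$, $i\in I(p)$, with a distinct vertex $x^i\in\complement S_i(p)$. Since $p$ is assumed to have all distinct coordinates and lies in the interior, the point is finite, so $I(p)=\{0,1,\dots,d\}$ and there are exactly $d+1$ semispaces to match against the $d+1$ vertices $x^0,\dots,x^d$. The plan is to reduce the statement to a combinatorial \emph{injective matching} between semispaces and vertices, and then promote that matching to a bijection via a counting or Hall-type argument.

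Let me sketch the steps in order. First I would invoke Lemma~\ref{basic-tool}: since $p\in\conv(X)=S$, implication (i)$\rightarrow$(ii) guarantees that for every $i\in I(p)$ there is \emph{some} vertex lying in $\complement S_i(p)$. The content of the proposition is not mere existence but that we can realize this as a permutation, i.e.\ assign distinct vertices to distinct sectors. So the real work is to build an injection $i\mapsto x^{\sigma(i)}$ with $x^{\sigma(i)}\in\complement S_i(p)$. Second, I would exploit the interior hypothesis to control how the vertices are distributed among the sectors $\complement S_i(p)$. The natural tool here is the explicit description of the sectors coming from Definition~\ref{def:semi}: for a finite non-increasing point the complements $\complement S_i(p)$ are governed by strict coordinate inequalities $x_i<p_i$ (or $x_i>p_i$) threaded through the ordering of coordinates. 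Since $p$ has all coordinates distinct and sits strictly inside $S$, each bounding inequality must be \emph{active}, meaning no sector can be covered by the others; this is what should force the matching to be a genuine bijection rather than allowing two sectors to share the same unique vertex while a third sector is starved.

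For the matching itself I expect the cleanest route is a Hall's marriage / defect-counting argument. Consider the bipartite incidence between $\{0,\dots,d\}$ (sectors) and $\{x^0,\dots,x^d\}$ (vertices), with an edge when $x^j\in\complement S_i(p)$. A perfect matching exists iff Hall's condition holds, so I would argue by contradiction: if some collection of sectors $\{S_i(p):i\in A\}$ were collectively covered by fewer than $|A|$ vertices, then all the remaining vertices would lie in $\bigcap_{i\in A}S_i(p)$; since each $S_i(p)$ is max-min convex and avoids $p$, so is this intersection, and it would then contain all of $S$ except possibly a negligible set — contradicting $p\in\inter(S)\subseteq\conv(\text{vertices})$ once we feed the situation back through Lemma~\ref{basic-tool}. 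The interior assumption is precisely what rules out the degenerate coincidences that would violate Hall's condition.

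The main obstacle will be step three: making rigorous the claim that the interior hypothesis forces Hall's condition, i.e.\ that no small family of sectors can be starved of distinct vertices. Existence of one vertex per sector is immediate from Lemma~\ref{basic-tool}, but distinctness requires genuinely using the geometry of an interior point with distinct coordinates, and the bookkeeping across the index blocks $K_j,L_j$ of Definition~\ref{def:semi} is where the argument could become delicate. I would handle this by reading off, from the explicit inequalities defining each $\complement S_i(p)$, which coordinate comparison a vertex must satisfy to land in sector $i$, and then arguing that if $p$ is strictly interior the full system of $d+1$ such conditions must be satisfiable simultaneously by a permutation — otherwise $p$ would lie on the relative boundary of $S$, contrary to $p\in\inter(S)$.
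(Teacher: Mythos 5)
Your overall shape (argue by contradiction against $p\in\inter(S)$) is right, but the combinatorial core of your plan --- verifying Hall's condition for the bipartite graph between sectors and vertices --- is exactly the step you leave unproved, and the sketch you give for it does not go through. If a family $A$ of sectors has fewer than $|A|$ vertices in its union, you only learn that the \emph{remaining} vertices lie in $\bigcap_{i\in A}S_i(p)$; the vertices that do meet $\bigcup_{i\in A}\complement S_i(p)$ are not in that intersection, so you cannot conclude that $\conv(X)$ is (even approximately) contained in a convex set avoiding $p$, and the contradiction with interiority evaporates. You correctly identify this as the main obstacle, but the proposal offers no mechanism to close it.

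The missing idea, which is what the paper uses, is to work with the \emph{interiors} of the sectors rather than the closed sectors: because $p$ has all coordinates distinct (and is away from the boundary), the sets $\inter(\complement S_i(p))$, $0\le i\le d$, are pairwise disjoint. This kills the need for Hall's theorem entirely. One shows that each $\inter(\complement S_i(p))$ meets $X$: if not, then $X\subseteq\complement(\inter(\complement S_i(p)))=\overline{S_i(p)}$, which is max-min convex, so $\conv(X)$ is disjoint from $\inter(\complement S_i(p))$; since every neighbourhood of $p$ meets $\inter(\complement S_i(p))$, this contradicts $p\in\inter(\conv(X))$. (Note this is stronger than what Lemma~\ref{basic-tool} gives you, which is only membership in the closed sector.) Now $d+1$ pairwise disjoint sets each containing at least one of the $d+1$ points of $X$ forces, by pigeonhole, exactly one point per sector interior --- and that assignment is the desired permutation. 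So the interior hypothesis is used not to certify a Hall condition on overlapping closed sectors, but to replace the matching problem by a trivial counting argument on disjoint sets.
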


\begin{proof} We proceed by contradiction. As $p$ has all coordinates different and it is away from the boundary, the interiors of $\complement S_i(p), 0\le i\le d,$ are disjoint.
If $p$ does not internally separate the points of $X$, then there
exists $i\colon 0\le i\le d$ such that $\inter(\complement
S_i(p))\cap X=\emptyset$. However, as the complement
$\complement(\inter(\complement S_i(p)))$ is the topological closure
of $S_i(p)$, it is a max-min convex set, and hence $\conv(X)\cap
\inter(\complement S_i(p))=\emptyset$. But then $p$ is not in the
interior of $\conv(X)$.
\end{proof}

The notion of interior and, more generally, of dimension in max-min
convexity will be investigated in another publication. We now treat
the other special case.

\begin{proposition}\label{propo-same-order}  Assume that $x^{\ell}\in \mmset^d, 0\le \ell\le d,$ are non-increasing, i.e.,
\begin{equation}\label{eq-ineeq-34}
x^{\ell}_1\geq x^{\ell}_2\geq\ldots\geq x^{\ell}_d,\quad 0\le {\ell}\le d,
\end{equation}
and finite. Then there exists $p\in\mmset^d$ such that
$x^\ell\in\complement S_{\ell}(p)$ for all $\ell\in\{0,1,\ldots,
d\}$.
\end{proposition}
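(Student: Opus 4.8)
The plan is to reformulate the conclusion as a matching problem and then to construct the point $p$ with a plateau structure adapted to the data. Since $x^0,\dots,x^d$ is really an unordered collection, it suffices to produce a non-increasing finite $p$ together with a bijection assigning the $d+1$ points to the $d+1$ indices $\{0,\dots,d\}$ so that the point sent to $\ell$ lies in $\complement S_\ell(p)$; relabelling then gives the stated form (this ``up to a permutation'' reading is the one used in the definition of internal separation, and it is unavoidable already for $d=1$). For a non-increasing finite $p$, Definition~\ref{def:semi} gives the sectors explicitly: $\complement S_0(p)=\{x\mid x\le p\}$, while for $\ell\ge 1$ one has $\complement S_\ell(p)=\{x\mid x_\ell\ge p_\ell\ \text{and}\ x_i\le p_i\ \text{for every $i$ past the plateau of $p$ containing $\ell$}\}$. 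A short check shows these $d+1$ sectors cover $\mmset^d$ (an arbitrary $x$ lies in $\complement S_\ell(p)$ for the largest $\ell$ with $x_\ell>p_\ell$, and in $\complement S_0(p)$ if there is none) and that each of them is closed. Thus what is required is a system of distinct representatives for the $d+1$ points among the $d+1$ sectors, i.e. a perfect matching, and by Hall's theorem it is enough to choose $p$ so that the marriage condition holds.

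The next steps are to record the decoupled form of the constraints and to see why the obvious constructions fail. Once $p$ is non-increasing the conditions split coordinate by coordinate: a point placed at $\ell\ge1$ contributes one lower bound $x_\ell\ge p_\ell$ at its own coordinate and the upper bounds $x_i\le p_i$ only at the coordinates lying beyond its plateau. The naive recipes --- taking $p$ with all coordinates distinct and matching greedily, or taking $p$ componentwise equal to the join of the points --- both break down: the ``tail'' upper bounds can force a lower coordinate of $p$ to be large, while monotonicity $p_1\ge\cdots\ge p_d$ forces it to be small, and these two pressures can be incompatible. The device that reconciles them is to let $p$ carry plateaus: if indices are grouped into a block on which $p$ is constant, then every point matched inside that block has its tail truncated at the end of the block, so the offending upper bounds disappear. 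The two extreme patterns already display the phenomenon --- a strictly decreasing $p$ and the diagonal $p=(t,\dots,t)$ each settle some instances but neither settles all (for example, if all $d+1$ points equal one fixed non-diagonal vector $v$, then one is forced to take $p=v$, which is neither strictly decreasing nor diagonal).

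Accordingly I would construct $p$ by choosing its plateau pattern adaptively, proceeding by induction on $d$. At each stage one peels off the bottom coordinate together with a point $x^{\ast}$ maximizing it, tentatively assigning $x^{\ast}$ to the index $d$ (whose sector is always the plain halfspace $\{x\mid x_d\ge p_d\}$), setting $p_d$ above the remaining $d$-th coordinates, and recursing on the other $d$ points restricted to the first $d-1$ coordinates. When the recursively produced last coordinate of $p'$ already dominates the value forced on $p_d$, one appends $p_d$ with a strict drop; otherwise one starts (or extends) a plateau at the bottom, which is precisely the case in which the freshly created tail constraint at coordinate $d$ would have been violated and is thereby removed. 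The main obstacle --- and the heart of the proof --- is to show that this adaptive choice can always be made so that Hall's marriage condition is met in the end; I expect to control it by strengthening the induction hypothesis so that the last coordinate of $p'$ may be selected within a prescribed range, and to dispose of the degenerate and tie cases (such as the all-equal-$v$ configuration) by a closure/perturbation argument, legitimate because every $\complement S_\ell(p)$ is closed and the family of sectors covers $\mmset^d$ and varies upper-semicontinuously with $p$. The base case $d=1$ is immediate: assign the larger scalar to index $1$, the smaller to index $0$, and take $p_1$ between them.
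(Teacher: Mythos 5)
Your setup is sound and your strategy is recognizably the same one the paper uses: you correctly read the conclusion as ``up to a relabelling of the $x^\ell$,'' you correctly extract from Definition~\ref{def:semi} that for a non-increasing finite $p$ the sector $\complement S_0(p)$ is $\{x\mid x\le p\}$ while $\complement S_\ell(p)$ for $\ell\ge 1$ imposes one lower bound $x_\ell\ge p_\ell$ and upper bounds $x_i\le p_i$ only for $i$ past the plateau of $p$ containing $\ell$, and your idea of peeling off the bottom coordinate and assigning to index $d$ a point maximizing the $d$-th coordinate is exactly how the paper begins. But the proof stops where the real work starts: you write that showing the adaptive choice can always be completed ``is the main obstacle and the heart of the proof,'' and you only ``expect'' to control it by a strengthened induction hypothesis plus an unspecified closure/perturbation argument. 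That is the entire content of the proposition, so as it stands this is a plan, not a proof. Two specific weaknesses: (1) routing the argument through Hall's theorem buys you nothing, because the bipartite graph between points and sectors depends on the as-yet-unconstructed $p$, and verifying the marriage condition for a good $p$ is no easier than exhibiting the matching directly; (2) the perturbation escape hatch is not justified --- the combinatorial type of the semispace family jumps discontinuously as plateaus form or break, and you would have to prove the semicontinuity you assert.

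The paper closes the gap with an explicit one-shot construction rather than an induction. Define $y_d:=\max_\ell x^\ell_d$ and relabel so the maximum is attained by $x^d$; then $y_{d-1}:=\max_{\ell\le d-1}x^\ell_{d-1}$, relabel among the remaining points, and so on, producing a vector $y$ with $y_t=\max_{\ell\le t}x^\ell_t=x^t_t$ for all $t$. Then take $p$ to be the \emph{largest non-increasing vector with $p\le y$}. This $p$ satisfies $p_t\le y_t=x^t_t$ everywhere, and $p_t=y_t$ whenever $p_t<p_{t-1}$ (i.e.\ at the start of every plateau), and those two facts are exactly what kill the tension you worried about: the lower bound $x^\ell_\ell\ge p_\ell$ holds by $p_\ell\le y_\ell$ together with the relabelling, and each tail upper bound $x^\ell_t\le p_t$ for $t$ past the relevant plateau is obtained by walking back to the first index $t-i$ of the plateau containing $t$, where $p_{t-i}=y_{t-i}\ge x^\ell_{t-i}$, and then invoking the hypothesis that $x^\ell$ is non-increasing to get $x^\ell_t\le x^\ell_{t-i}\le p_{t-i}=p_t$. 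Note that this last step is where the assumption~\eqref{eq-ineeq-34} actually does its work; your sketch never uses it, which is a sign that the missing verification cannot be purely combinatorial. No Hall's theorem, no perturbation, and no tie-breaking is needed.
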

\begin{proof}
Let $y_d:=\max_{\ell=0}^d x_d^{\ell}$, and $\ell'_1$ be an index
where this maximum is attained. Reordering the points, we can assume
$\ell'_1=d$. Let $y_{d-1}:=\max_{\ell=0}^{d-1} x_{d-1}^{\ell}$ and
$\ell'_2$ be an index where this maximum is attained. Reordering the
points $x^0,\ldots, x^{d-1}$ we can assume $\ell'_2=d-1$. On a
general step of this procedure, we have obtained the partial maxima
$y_d, y_{d-1},\ldots, y_{d-t+1}$ equal to $x_d^{d}$,
$x_{d-1}^{d-1},\ldots, x_{d-t+1}^{d-t+1}$ (having reorganized the
given points $x$), and we define $y_{d-t}:=\max_{\ell=0}^{d-t}
x_{d-t}^{\ell}$, requiring that $y_{d-t}=x_{d-t}^{d-t}$. On the last
step, we have $y_1=\max(x_1^0,x_1^1)$ and swap $x^0$ with $x^1$ (if
necessary) to obtain $y_1=x_1^1$.

This process defines the vector $y=(y_1,\ldots,y_d)$ and rearranges
the given points $x^{0},\ldots,x^{d}$ in such a way that
\begin{equation}
y_t={\max}_{\ell=0}^t x_t^\ell=x_t^t,\quad\forall t\in\{1,\ldots,d\}.
\end{equation}

Now define $p$ to be the largest non-increasing vector satisfying
$p\leq y$. We will show that $p$ is a point that we need. Before the
main argument we observe that
\begin{equation}
\label{e:zprop1}
p_t\leq y_t=x_t^t\quad \forall t\in\{1,\ldots,d\},
\end{equation}
and
\begin{equation}
\label{e:zprop2}
\left\{
\begin{split}
p_1&=y_1,\\
p_t&=y_t={\max}_{\ell=0}^t x_t^\ell=x_t^t\quad \text{if $p_t<p_{t-1}$}.
\end{split}
\right .
\end{equation}
Only~\eqref{e:zprop2} has to be shown. Indeed, if $p_1<y_1$, then
$(y_1,p_2,\ldots,p_d)$ is a non-increasing vector bounded by $y$
from above and contradicting the maximality of $z$, so $p_1=y_1$
holds. If $p_t<p_{t-1}$ and $p_t<y_t$ then defining
$p'_t:=\min(p_{t-1},y_t)$ we have $p_{t-1}\geq p'_t\geq p_{t+1}$ and
$p'_t\leq y_t$, so again,
$(p_1,\ldots,p_{t-1},p'_t,p_{t+1},\ldots,p_d)$ is a non-increasing
vector bounded by $y$ from above and contradicting the maximality of
$p$.

For what follows, we refer the reader to Definition \ref{def:semi}, that describes the structure of the semispaces.

We now show that $x^\ell\in\complement S_\ell(p)$ for all
$\ell\in\{0,1,\ldots,d\}$, starting with $\ell=0$. In this case we
need to argue that $x_t^0\leq p_t$ for all $t$. Indeed, when
$p_{t-1}>p_t$, the inequality $x_t^0\leq p_t$ follows
from~\eqref{e:zprop2} (second part). If $p_{t-1}=p_t$, then either
$p_1=\ldots=p_t$, or $p_{t-i-1}>p_{t-i}=\ldots=p_{t-1}=p_t$. In the
first case we have~$x_s^0\leq p_s$ for $s=1$, and in the second case
for $s=t-i$, and in both cases the required inequality $x_t^0\leq
p_t$ follows since $x^0$ is a non-increasing vector.

When $\ell>0$ and $p_{\ell-1}>p_\ell$, we have $x_\ell^\ell=p_\ell$
by~\eqref{e:zprop2}, so $x_\ell^\ell\geq p_\ell$. When
$p_{t-1}>p_t$, the inequalities $x_t^\ell\leq p_t$ for $t>\ell$
follow from~\eqref{e:zprop2}, and when $p_{t-1}=p_t$, we have
$p_{t-i-1}>p_{t-i}=\ldots=p_t$ for some $i$, where $t-i\geq \ell$.
In this case $x_{t-i}^{\ell}\leq p_{t-i}$ follows
from~\eqref{e:zprop2}, and we use that $x^\ell$ is non-increasing to
obtain $x_t^\ell\leq p_t=p_{t-i}$.

If $p_{\ell-1}=p_\ell$, then either $p_\ell=p_{\ell+1}=\ldots=p_d$,
or there exists $i$ such that
$p_\ell=\ldots=p_{\ell+i}>p_{\ell+i+1}$. In this case
$x_\ell^\ell\geq p_\ell$ follows from~\eqref{e:zprop1}, and the
inequalities $x_t^\ell\leq p_t$ for $t>\ell+i$ are shown as in the
previous case.

The proof is complete.
\end{proof}

\if{
\begin{proposition}\label{propo-dim2}
Let $(x_1,y_1),\,(x_2,y_2),\,(x_3,y_3)\in\mmset^2$, with neither of the
points at the boundary. Then there exists a point internally separating them.
\end{proposition}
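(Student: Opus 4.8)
The plan is to recast internal separation for $d=2$ as a combinatorial assignment problem and then dichotomize on the dimension of $\conv(X)$. First I would record the explicit shape of the three sectors at a finite point $p=(p_1,p_2)$, read off from Definition~\ref{def:semi}: the complement $\complement S_0(p)$ is always the lower box $\{x\colon x\le p\}$, while the other two are the North strip $\{x\colon x_2\ge p_2\}$ together with the Southeast quadrant $\{x\colon x_1\ge p_1,\ x_2\le p_2\}$ when $p_1>p_2$, their mirror images (the East strip and the Northwest quadrant) when $p_1<p_2$, and the two strips $\{x_1\ge p_1\}$, $\{x_2\ge p_2\}$ when $p_1=p_2$. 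Internally separating $x^0,x^1,x^2$ then amounts to finding a finite $p$ together with a bijection sending each $x^i$ into a distinct $\complement S_i(p)$; by the Multiorder principle (Lemma~\ref{basic-tool}) such an assignment automatically forces $p\in\conv(X)$, so I only need to secure the three containments.

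The main case is when $S=\conv(X)$ has non-empty interior. Here I would simply invoke Proposition~\ref{p:inter}: choose any $p\in\inter(S)$ with $p_1\ne p_2$, which is possible because $\inter(S)$ is a non-empty open subset of $(0,1)^2$ and hence meets the complement of the diagonal $\cD_2$. Proposition~\ref{p:inter} then delivers, up to relabelling, $x^i\in\complement S_i(p)$ for every $i$, and since an interior point is automatically finite (any max-min combination of finite points has all coordinates strictly between $0$ and $1$), this disposes of the generic situation at once.

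It remains to treat the degenerate case $\inter(S)=\emptyset$, where $\conv(X)$ is one-dimensional; by the description of segments (Theorems~\ref{tcomp} and~\ref{tincomp}) this is exactly the situation in which one of the three points lies on the max-min segment spanned by the other two, so the points sit on a common monotone staircase. I would handle it by approximation: perturb $x^0,x^1,x^2$ slightly to points in general position whose hull has non-empty interior, apply the previous paragraph to obtain separating points $p_\epsilon$, and pass to a subsequential limit $p\in(0,1)^2$ (possible by compactness, after thinning so that the bijection is constant along the sequence). Since each semispace in Definition~\ref{def:semi} is defined by strict inequalities, each $\complement S_i$ is closed, so the containments will survive in the limit provided the sectors at $p$ contain the limits of the sectors at $p_\epsilon$. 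This is the one delicate point and the main obstacle: when $p$ lands on the diagonal the combinatorial type of the sectors changes, but the change is favourable, since as $p_\epsilon\to p$ the Southeast quadrant shrinks into a subset of the East strip and the Northwest quadrant into a subset of the North strip, and the indexing of Definition~\ref{def:semi} is arranged so that these inclusions respect the labels $\complement S_i$. Thus the larger diagonal sectors at $p$ still receive the limit points, and $p$ separates internally. Alternatively one can bypass the limit with a direct construction on the staircase, placing $p$ at the appropriate step — and exactly on the diagonal when a ``corner'' of the staircase forces $p_1=p_2$ — which is the one-dimensional shadow of the argument in Proposition~\ref{propo-same-order}.
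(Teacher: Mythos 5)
Your route is genuinely different from the paper's. The paper proves this proposition by a short, entirely elementary case analysis: it sets $t:=\min_{i}\max(x_i,y_i)$, assumes the minimum is attained by the first point with $x_1\leq y_1=t$, partitions $\mmset^2$ into the four boxes meeting at the corner $(y_1,y_1)$, and checks the six possible placements of the other two points, exhibiting $(y_1,y_1)$ itself as the separating point in four configurations, invoking Proposition~\ref{propo-same-order} when both remaining points land in $B_1=\{(x,y)\mid x\le y_1,\ y\ge y_1\}$ (so that all three points satisfy $x_i\le y_i$), and writing down an explicit point such as $(x_3,y_2)$ or $(x_2,y_2)$ in the remaining case. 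Your generic branch is sound: a finite off-diagonal point of $\inter(\conv(X))$ exists whenever the interior is non-empty, and Proposition~\ref{p:inter} gives exactly the required assignment; your analysis of how the sectors degenerate as the separating point approaches the diagonal (Southeast quadrant collapsing into the East strip, North strip into North strip) is also correct and is the right way to control the limit.

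The gap is in the degenerate branch, concentrated in the sentence ``perturb $x^0,x^1,x^2$ slightly to points in general position whose hull has non-empty interior.'' In classical convexity this is a triviality; in max-min convexity it is not. A two-dimensional hull is a restrictive combinatorial condition on the $2\times 3$ matrix of coordinates: one needs a free column, an assignment of the two rows to the two remaining columns, and coefficients $\lambda$ for which each row maximum of the scaled matrix is attained uniquely and lies below the corresponding matrix entry. Verifying that every empty-interior triple admits an arbitrarily small perturbation realizing this requires a case analysis of essentially the same size as the paper's direct proof, and ``general position'' in the naive sense (all six coordinates distinct) does not visibly deliver it. You give no argument here, so the hard work has been deferred to an unproved lemma rather than avoided. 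The same applies to the preceding assertion that empty interior forces one of the three points onto the max-min segment of the other two: Theorems~\ref{tcomp} and~\ref{tincomp} describe segments, not convex hulls of triples, and this characterization (on which your alternative ``direct staircase'' construction would rest) is nowhere justified. As written, the degenerate case therefore stands on two unsubstantiated structural claims, and at least one of them must be proved before the argument closes.
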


\begin{proof}
Denote $t:=\min_{i=1}^3 \max(x_i,y_i)$, and let this minimum be attained by
$(x_1,y_1)$, with $x_1\leq y_1=t$.
Consider the following subsets (boxes) of $\mmset^2$:
\begin{itemize}
\item $B_0=\{(x,y)\in \mmset^2\vert x\le y_1, y\le y_1\}$,
\item $B_1=\{(x,y)\in \mmset^2\vert x\le y_1, y\ge y_1\}$,
\item $B_2=\{(x,y)\in \mmset^2\vert x\ge y_1, y\ge y_1\}$,
\item $B_3=\{(x,y)\in \mmset^2\vert x\ge y_1, y\le y_1\}$.
\end{itemize}

Box $B_0$ contains only the point $(x_1,y_1)$. The other points belong to the other boxes. One identifies the following cases:
\begin{itemize}
\item[1.] $(x_2,y_2), (x_3,y_3)\in B_2$,
\item[2.] $(x_2,y_2)\in B_2$, $(x_3,y_3)\in B_1$,
\item[3.] $(x_2,y_2)\in B_2$, $(x_3,y_3)\in B_3$,
\item[4.] $(x_2,y_2)\in B_1$, $(x_3,y_3)\in B_3$,
\item[5.] $(x_2,y_2), (x_3,y_3)\in B_1$,
\item[6.] $(x_2,y_2), (x_3,y_3)\in B_3$.
\end{itemize}

In cases 1. through 4., one can choose $(y_1,y_1)$ as a separating point.

In case 5., we see that all points satisfy $x_i\leq y_i$ for $i=1,2$ and $3$, so
we apply Proposition~\ref{propo-same-order}.

In case 6., assume without loss of generality that $y_3\leq y_2\leq y_1$. If
$x_3\leq x_2$, then we can choose $(x_3,y_2)$ as a separating point, and
if $x_3\geq x_2$ then we choose $(x_2,y_2)$.
\end{proof}
}\fi

\section{Internal separation property}
\label{s:intsep} This section is devoted to the proof of
Theorem~\ref{t:intsep} (the internal separation property). Let
$u^{(i)}$, for $i=1,\ldots,d+1$ be the given points in $\mmset^d$, let $h\in\mmset$
and let $A\in\mmset^{(d+1)\times d}$ be the matrix where these
vectors are rows. For such a matrix, denote by $A^{(h)}$ the Boolean
matrix with entries
\begin{equation}
a_{ij}^{(h)}=
\begin{cases}
1, &\text{if $a_{ij}\geq h$},\\
0, &\text{if $a_{ij}<h$}.
\end{cases}
\end{equation}
Following the literature on max-min algebra, we may call it the {\em
threshold matrix} of level $h$. Let $t$ be the greatest $h$ for
which $A^{(h)}$ contains a $d\times d$ submatrix with a nonzero
permanent (in other words, a permutation with nonzero weight).

For every $h>t$, every $d\times d$ submatrix of $A^{(h)}$ has zero
permanent. Take $h>t$ to be smaller than any entry of $A$ that is
greater than $t$, and consider the bipartite graph corresponding to
$A^{(h)}$\footnote{One part of the vertices represents the rows, and
the other represents the columns. The graph contains an edge between
the row vertex $i$ and the column vertex $j$ if and only if
$a_{ij}^{(h)}=1$, that is, $a_{ij}\geq h$.}. As $A^{(h)}$ has zero
permanent, the size of maximal matching in that graph is less than
$d$. By the K\"{o}nig theorem, the size of maximal matching is equal
to the size of the minimal vertex cover. In particular, there exists
a subset of rows $M_2$ and a subset of columns $N_2$ with number of
elements $m_2$ and $n_2$ respectively, such that $m_2+n_2<d$ and
such that all $1$'s of $A^{(h)}$ are in these columns and rows. Let
$M_1$, resp. $N_1$, be the complements of $M_2$, resp. $N_2$ in
$\{1,\ldots,d+1\}$, resp. $\{1,\ldots,d\}$. Then all entries of the
submatrix $A_{M_1N_1}^{(h)}$ are zero, and hence all entries of
$A_{M_1N_1}$ are less than or equal to $t$, and we have
$m_1+n_1>d+1$, where $m_1$, resp. $n_1$ are the number of elements
in $M_1$, resp. $N_1$.

Thus $A$ contains an $m_1\times n_1$ submatrix $B^{\leq
t}:=A_{M_1N_1}$ where all entries do not exceed $t$ and we have
$m_1+n_1>d+1$. At the same time, there is a row index $f$ which we
call the {\em free index}, and a permutation $\pi\colon
\{1,\ldots,d+1\}\bez\{f\}\mapsto\{1,\ldots,d\}$ such that
$a_{i\pi(i)}\geq t$ for all $i\neq f$. The pair $(B^{\leq t},\pi)$
will be called a {\em (K\"{o}nig) diagram}. Denote the number of
intersections of $\pi$ with $A_{M_1N_1}$ by $r$ and with
$A_{M_2N_2}$ by $s$. Then we obtain, having $d$ as the sum of the
number of intersections of $\pi$ with $A_{M_1N_1}$, $A_{M_1N_2}$,
$A_{M_2N_1}$ and $A_{M_2N_2}$ that
\begin{equation}
\label{nequalsto}
d=
\begin{cases}
r+(m_1-r-1)+(n_1-r)+s, &\text{if $f\in M_1$},\\
r+(m_1-r)+(n_1-r)+s, &\text{if $f\notin M_1$}.
\end{cases}
\end{equation}
Eliminating $r$ from~\eqref{nequalsto} we obtain
\begin{equation}
\label{lequalto}
r=
\begin{cases}
m_1+n_1-(d+1)+s,&\text{if $f\in M_1$},\\
m_1+n_1-d+s, &\text{if $f\notin M_1$}.
\end{cases}
\end{equation}

We see that with $m_1$, $n_1$ and $d$ fixed, the number $r$ is
minimal when $f\in M_1$ and $s=0$. Such diagrams will be called {\em
tight}. See Figure 4 for an illustration of a tight diagram. The entries in $\pi$ are represented by *.
In general, the {\em tightness} of a diagram is defined as
the non-positive integer $m_1+n_1-d-1-r$.

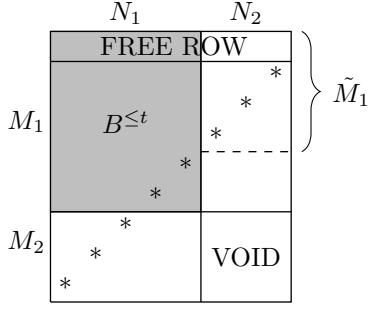
\begin{figure}[h]
\begin{tikzpicture}[scale=.4]
\draw [fill=lightgray] (0,3)--(5,3)--(5,9)--(0,9)--(0,3);

\draw [line width = 0.5] (0,0)--(8,0)--(8,9)--(0,9)--(0,0); \draw
[line width = 0.5] (0,8)--(8,8); \draw [line width = 0.5]
(0,3)--(8,3); \draw [line width = 0.5] (5,0)--(5,9);

\node at (2.5,6) {$B^{\leq t}$}; \node at (6.5,1.5) {VOID}; \node at
(4.1,8.5) {FREE ROW}; \node at (-.8,6) {$M_1$}; \node at (-.8,2)
{$M_2$}; \node at (2.5,9.6) {$N_1$}; \node at (6.5,9.6) {$N_2$};

\node at (.5,.5) {*};
\node at (1.5,1.5) {*};
\node at (2.5,2.5) {*};
\node at (3.5,3.5) {*};
\node at (4.5,4.5) {*};

\draw [dashed, line width=0.5] (5,5) -- (8,5);

%\draw [line width =0.5] (8.2,9) -- (8.2,5); \node at (8.7,7.9) {$\Tilde{M}_1$};

\draw [decorate,decoration={brace,amplitude=7pt,mirror,raise=4pt},yshift=0pt]
(8,5) -- (8,9) node [black,midway,xshift=0.8cm] {$\Tilde{M}_1$};

\node at (5.5,5.5) {*};
\node at (6.5,6.5) {*};
\node at (7.5,7.5)
{*};
\end{tikzpicture}
\caption{A tight diagram ($\Tilde{M}_1$ is the set appearing in the
proof of Theorem~\ref{t:intsep} in the end of this section).
\label{f:tight}}
\end{figure}

Let us indicate some sufficient conditions for $(B^{\leq t},\pi)$ to
be tight (the proof is omitted).

\begin{lemma}
\label{l:simpletight} The diagram $(B^{\leq t},\pi)$ is tight if $m_1+n_1=d+2$, $f\in M_1$ and $\pi$ intersects
with $B^{\leq t}$ only once. In particular, if $B^{\leq t}$ is a column, then $(B^{\leq t},\pi)$ is tight.
\end{lemma}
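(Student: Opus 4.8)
The plan is to read off the conclusion directly from the already-established counting identity~\eqref{lequalto} together with the definition of tightness as the integer $m_1+n_1-d-1-r$. Recall that a diagram is declared tight precisely when this integer vanishes, equivalently when $f\in M_1$ and $s=0$; so the whole lemma reduces to a substitution, and the work is purely bookkeeping.

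First I would dispose of the main hypothesis. Since $B^{\leq t}=A_{M_1N_1}$, the assumption that $\pi$ meets $B^{\leq t}$ exactly once reads $r=1$. Because $f\in M_1$, formula~\eqref{lequalto} gives $r=m_1+n_1-(d+1)+s$; substituting $m_1+n_1=d+2$ and $r=1$ forces $s=0$. Thus $f\in M_1$ and $s=0$, which is exactly the defining condition for tightness. Equivalently, the tightness integer equals $(d+2)-d-1-1=0$, so the diagram $(B^{\leq t},\pi)$ is tight.

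Next I would treat the ``in particular'' claim, where $B^{\leq t}$ is a single column, i.e.\ $n_1=1$. The inequality $m_1+n_1>d+1$, established just before the lemma from $m_2+n_2<d$, then yields $m_1>d$; since $m_1\leq d+1$ this forces $m_1=d+1$. Consequently $M_2=\emptyset$, every row index lies in $M_1$, and in particular $f\in M_1$, while $m_1+n_1=d+2$. Finally, as $\pi$ is a bijection onto $\{1,\ldots,d\}$, each of the $d$ columns is met exactly once by $\pi$, so the lone column constituting $N_1$ is met exactly once, giving $r=1$. The hypotheses of the first part are therefore satisfied and tightness follows.

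Since the argument is essentially a substitution into~\eqref{lequalto}, there is no genuine obstacle here; the only points requiring care are the bookkeeping that $n_1=1$ forces $m_1=d+1$ (and hence $f\in M_1$, $M_2=\emptyset$), and the elementary observation that a bijection $\pi$ onto the full column set hits each column, in particular the single column of $N_1$, exactly once.
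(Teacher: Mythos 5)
Your proof is correct and follows essentially the same route as the paper: substituting $m_1+n_1=d+2$ and $r=1$ into the first line of~\eqref{lequalto} to get $s=0$, which is the defining condition for tightness. Your explicit verification of the ``in particular'' clause (that $n_1=1$ together with $m_1+n_1>d+1$ forces $m_1=d+1$, hence $f\in M_1$ and $r=1$) is a detail the paper leaves to the reader, and it is carried out correctly.
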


\begin{proof} Substituting $m_1+n_1=d+2$ and $r=1$ in the first line
of~\eqref{lequalto} we have $s=0$.
\end{proof}

Our next aim is to show that there always exists at least one tight
diagram, and let us start with a pair of auxiliary lemmas.

\begin{lemma}[Sinking]
\label{l:sinking} Let $(B^{\leq t},\pi)$ be not tight, and let
$(k_0,\pi(k_0))\in M_1\times
 N_1$. Then we have one of the following alternatives:
\begin{itemize}
\item[{\rm(i)}] There exists a sequence $k_0,\ldots, k_l$ such that
$(k_i,\pi(k_i))\in M_2\times N_1$ for $i=1,\ldots,l-1$,
$(k_l,\pi(k_l))\in M_2\times N_2$ or $k_l$ is free, and
$a_{k_i\pi(k_{i-1})}>t$ for all $i=1,\ldots,l$;
\item[{\rm(ii)}] There is a tight diagram $(\Tilde{B}^{\leq t},\pi)$.
\end{itemize}
\end{lemma}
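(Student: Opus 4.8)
The plan is to analyze the König diagram $(B^{\leq t},\pi)$ starting from the given intersection $(k_0,\pi(k_0))\in M_1\times N_1$ and to build a path in the associated bipartite structure by repeatedly following the permutation $\pi$ and large entries (those exceeding $t$). Recall that $\pi$ assigns to each non-free row $i$ a column $\pi(i)$ with $a_{i\pi(i)}\geq t$; the entries in $A_{M_1N_1}=B^{\leq t}$ are all $\leq t$, and our aim is to trace how the intersections of $\pi$ distribute among the four blocks determined by $(M_1,M_2)\times(N_1,N_2)$.

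First I would set up the iterative ``sinking'' construction. Having $(k_0,\pi(k_0))\in M_1\times N_1$, I look at column $\pi(k_0)\in N_1$ and ask which row $k_1$ satisfies $a_{k_1\pi(k_0)}>t$; since the full matrix $A^{(h)}$ (for $h>t$) has its $1$'s confined to the cover $M_2\cup N_2$, and $\pi(k_0)\in N_1$, any entry strictly exceeding $t$ in column $\pi(k_0)$ must lie in a row of $M_2$, giving $k_1\in M_2$. I then record $(k_1,\pi(k_1))$ and distinguish: if $\pi(k_1)\in N_2$ we are in alternative (i) with $l=1$; if $k_1$ is the free row $f$ we again stop in (i); otherwise $\pi(k_1)\in N_1$ and I repeat the step with $k_1$ in place of $k_0$, each time obtaining a new row in $M_2$ via a large entry $a_{k_i\pi(k_{i-1})}>t$. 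The key point is that this produces precisely the sequence $k_0,\ldots,k_l$ described in (i), with interior rows in $M_2\times N_1$ and the terminal step exiting to $M_2\times N_2$ or to the free index.

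The crux is showing the process must terminate, and that when it cannot terminate in the clean manner of (i) we can instead exhibit a tight diagram, yielding alternative (ii). Because there are finitely many rows and the rows $k_1,k_2,\ldots$ are forced into $M_2$, the only obstruction to clean termination is a \emph{repetition} (a cycle) or getting stuck with no admissible large entry. I expect the main obstacle to be the bookkeeping that converts such a failure into a tight diagram: when the chain closes up or stalls, I can \emph{reroute} $\pi$ along the traced alternating path, swapping the assignments $\pi(k_{i-1})\mapsto\pi(k_i)$ down the chain. This rerouting uses each large entry $a_{k_i\pi(k_{i-1})}>t$ to re-cover the columns while vacating column $\pi(k_0)\in N_1$, which strictly increases the number of intersections with the new block structure and thereby decreases $r$ relative to $m_1+n_1-d-1$; by the tightness formula derived from~\eqref{lequalto}, pushing $s$ to $0$ and placing $f\in M_1$ makes the modified diagram $(\Tilde{B}^{\leq t},\pi)$ tight.

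Concretely, I would argue by minimality: among all diagrams obtainable by such reroutings, take one that is not tight and run the sinking step; either it reaches $M_2\times N_2$ or the free index (case (i)), or the forced march through $M_2\times N_1$ revisits a column, at which point the induced alternating cycle can be used to define $\Tilde{B}^{\leq t}$ and verify tightness via Lemma~\ref{l:simpletight} and the count~\eqref{nequalsto}. The delicate step, and the one I would write most carefully, is verifying that the rerouted permutation still satisfies $a_{i\pi(i)}\geq t$ off the free index and that its intersection pattern yields $s=0$ with $f\in M_1$; everything else is the finite-termination argument and a direct substitution into the tightness formula.
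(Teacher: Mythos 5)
Your opening steps agree with the paper: from $(k_0,\pi(k_0))\in M_1\times N_1$, any entry $a_{k\pi(k_0)}>t$ must sit in a row of $M_2$ because the $1$'s of $A^{(h)}$ are covered by $M_2\cup N_2$ and $\pi(k_0)\notin N_2$; iterating this produces exactly the chain of alternative (i), terminating when you hit $M_2\times N_2$ or the free row. The gap is in what you do when the chain cannot be cleanly extended. First, you only look for a large entry in the \emph{last} visited column $\pi(k_l)$. The paper's process instead grows a \emph{set} of rows and admits a new row $k'_{l+1}$ as soon as $a_{k'_{l+1}\pi(i)}>t$ for \emph{some} previously visited column $i$ (this is why it carries the subsequence property (*) and only extracts a genuine path at the very end). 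With your restriction the process can stall while an unvisited row still has a large entry in an earlier visited column; in that situation the submatrix on the visited columns and unvisited rows is \emph{not} all $\leq t$, so it cannot serve as $\Tilde{B}^{\leq t}$, and your path cannot be continued either -- neither alternative is established. The paper's dichotomy is precisely: either that submatrix (visited columns $\times$ unvisited rows) has all entries $\leq t$, in which case it has $\tilde m_1+\tilde n_1=d+2$, contains the free row, and meets $\pi$ only at $(k_0,\pi(k_0))$, hence is tight by Lemma~\ref{l:simpletight}; or some unvisited row of $M_2$ attaches to some visited column and the set grows. You never verify the ``all entries $\leq t$'' condition for your candidate block, which is the whole content of alternative (ii).

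Second, your fallback of rerouting $\pi$ along the traced path is the mechanism of Lemma~\ref{l:improve} (Diagram Improvement), not of the Sinking lemma, whose alternative (ii) asserts a tight diagram with the \emph{same} $\pi$. As described the rerouting is not even well defined in the stall case: setting $\Tilde{\pi}(k_i)=\pi(k_{i-1})$ leaves the column $\pi(k_l)$ unmatched and turns $k_0$ into a second free row, so no valid diagram results. Rerouting is legitimate only when the trajectory reaches the free row or closes into a cycle, and even then it produces a \emph{tighter} diagram (tightness increased by the number of full turns), not necessarily a \emph{tight} one; reaching an actually tight diagram requires either the explicit single-intersection block above or the iteration of improvements carried out in Theorem~\ref{t:konig-ext}. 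So the failure branch of your argument needs to be replaced by the set-growing construction and the verification that the resulting block has all entries $\leq t$, followed by an application of Lemma~\ref{l:simpletight}.
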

\begin{proof}[Proof (see Figures~\ref{f:sinking}
and~\ref{f:tightsinking})] If we have $a_{i\pi(k_0)}\leq t$ for all
$i$, then the entire column with index $\pi(k_0)$ can be taken for
$\Tilde{B}^{\leq t}$, that is $M_1=\{1,\dots,d+1\}, N_1=\pi(k_0)$
and the diagram $(\Tilde{B}^{\leq t},\pi)$ is tight (by
Lemma~\ref{l:simpletight}). If this is not the case, select $k'_1\in
M_2$ with $a_{k'_1\pi(k_0)}>t$.
%If $\pi(k'_1)\in N_2$ or $k'_1$ is free, then
%we are done, otherwise
Then we proceed as in the following general description (with the
sequence $k_0, k'_1$).

In general, suppose that we have found a sequence of rows $k_0,
k'_1,\ldots, k'_l$ where $k_0\in M_1$, $k'_1,\ldots, k'_l\in M_2$
and $\pi(k_0),\pi(k'_1),\ldots,\pi(k'_{l-1})\in N_1$ with the
following property:

(*) For each $s\colon 1\leq s\leq l$ there is a subsequence
$k_0,k_1\ldots k_r$ of $k_0,k'_1,\ldots, k'_s$ such that $k_r=k'_s$
and $a_{k_i\pi(k_{i-1})}>t$ for all $i=1,\ldots,r$.

If $\pi(k'_l)$ is in $N_2$ or $k'_l$ is free then we are done.
Otherwise consider the submatrix extracted from the columns
$\pi(k_0),\pi(k'_1),\ldots,\pi(k'_l)$ and all rows except for
$k'_1,\ldots, k'_l$. If this submatrix does not contain any entries
greater than $t$ then it can be taken for $\Tilde{B}^{\leq t}$ and
the diagram $(\Tilde{B}^{\leq t},\pi)$ is tight by
Lemma~\ref{l:simpletight}. Otherwise we choose
$k'_{l+1}\notin\{k_0,k'_1,\ldots,k'_l\}$ in $M_2$ in such a way that
$a_{k'_{l+1}\pi(i)}>t$ for some $i$ in $\{k_0,k'_1,\ldots,k'_l\}$.
Then $k_0,k'_1,\ldots, k'_l,k'_{l+1}$ satisfies the property (*),
and the process is continued until the intersection of $\pi$ with
$M_2\times N_1$ is exhausted and we end up either with a free $k_l$,
or such that $(k_l,\pi(k_l))\in M_2\times N_2$.
\end{proof}

\begin{figure}[h]
\begin{tikzpicture}[scale=.4]
%first picture
\draw [fill=lightgray] (0,5)--(5,5)--(5,9)--(0,9)--(0,5);

\draw [line width = 0.5] (0,0)--(8,0)--(8,9)--(0,9)--(0,0); \draw
[line width = 0.5] (0,1)--(8,1); \draw [line width = 0.5]
(0,5)--(8,5); \draw [line width = 0.5] (5,0)--(5,9);

\draw [dotted,line width = 0.5,->] (5.5,1.7)--(5.5,5.3); \node at
(5.5,5.5){$\bullet$};

\draw [line width = 0.5,->] (4.5,2.4)--(4.5,1.7); \node at (4.5,1.5)
{$\bullet$};

\draw [line width = 0.5,->] (2.5,4.4)--(2.5,2.7); \node at (2.5,2.5)
{$\bullet$};

\draw [line width = 0.5,->] (1.5,5.4)--(1.5,4.7); \node at (1.5,4.5)
{$\bullet$};

\node at (2.5,7) {$B^{\leq t}$}; \node at (4.1,.5) {FREE ROW}; \node
at (-.8,7) {$M_1$}; \node at (-.8,3) {$M_2$}; \node at (2.5,9.6)
{$N_1$}; \node at (6.5,9.6) {$N_2$};

\node at (.5,6.5) {*};
\node at (1.5,5.5) {*};
\node at (2.5,4.5) {*};
\node at (3.5,3.5) {*};
\node at (4.5,2.5) {*};
\node at (5.5,1.5) {*};

%second picture
\draw [fill=lightgray] (10,5)--(15,5)--(15,9)--(10,9)--(10,5);

\draw [line width = 0.5] (10,0)--(18,0)--(18,9)--(10,9)--(10,0);
\draw [line width = 0.5] (10,1)--(18,1); \draw [line width = 0.5]
(10,5)--(18,5); \draw [line width = 0.5] (15,0)--(15,9);

\draw [line width = 0.5,->] (10.5,6.4)--(10.5,3.7); \node at
(10.5,3.5) {$\bullet$};

\draw [line width = 0.5,->] (13.5,3.6)--(13.5,.7); \node at
(13.5,.5) {$\bullet$};

\node at (12.5,7) {$B^{\leq t}$}; \node at (13.7,.5) {FREE ROW};
\node at (9.4,7) {$M_1$}; \node at (9.4,3) {$M_2$}; \node at
(12.5,9.5) {$N_1$}; \node at (16.5,9.5) {$N_2$};

\node at (10.5,6.5) {*};
\node at (11.5,5.5) {*};
\node at (12.5,4.5) {*};
\node at (13.5,3.5) {*};
\node at (14.5,2.5) {*};
\node at (15.5,1.5) {*};
\end{tikzpicture}
\caption{Possible outcomes of sinking (the free row could belong to
$M_1$ but then the outcome on the right is
impossible).\label{f:sinking}}
\end{figure}
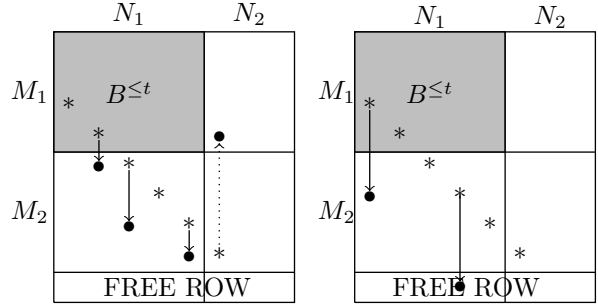

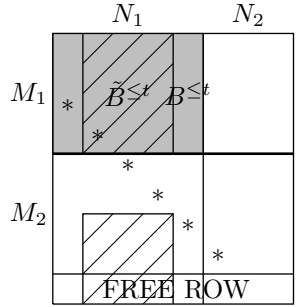
\begin{figure}[h]
\begin{tikzpicture}[scale=.4]
\draw [fill=lightgray] (0,5)--(5,5)--(5,9)--(0,9)--(0,5);

\draw [line width = 0.5] (0,0)--(8,0)--(8,9)--(0,9)--(0,0); \draw
[line width = 0.5] (0,1)--(8,1); \draw [line width = 1]
(0,5)--(8,5); \draw [line width = 0.5] (5,0)--(5,9);

\draw [line width = 0.5] (1,9)--(1,5)--(4,5)--(4,9);

\draw (1,5)--(4,8);
\draw (1,6)--(4,9);
\draw (1,7)--(3,9);
\draw (1,8)--(2,9);
\draw (2,5)--(4,7);
\draw (3,5)--(4,6);

\draw [line width = 0.5] (1,0)--(1,3)--(4,3)--(4,0); \draw
(1,0)--(4,3); \draw (2,0)--(4,2); \draw (3,0)--(4,1); \draw
(1,1)--(3,3); \draw (1,2)--(2,3);

\node at (4.5,7) {$B^{\leq t}$}; \node at (2.5,7) {${\tilde B}^{\leq
t}$}; \node at (4.1,.5) {FREE ROW}; \node at (-.8,7) {$M_1$}; \node
at (-.8,3) {$M_2$}; \node at (2.5,9.6) {$N_1$}; \node at (6.5,9.6)
{$N_2$};

\node at (.5,6.5) {*};
\node at (1.5,5.5) {*};
\node at (2.5,4.5) {*};
\node at (3.5,3.5) {*};
\node at (4.5,2.5) {*};
\node at (5.5,1.5) {*};
\end{tikzpicture}
\caption{A tight diagram arising when the sinking
stops.\label{f:tightsinking}}
\end{figure}

Now we consider a reverse process.

\begin{lemma}[Lifting]
\label{l:lifting} Let $(B^{\leq t},\pi)$ be not tight, and let
$(k_0,\pi(k_0))\in M_2\times N_2$. Then we have one of the following alternatives:
\begin{itemize}
\item[{\rm(i)}] There exists a sequence $k_0,\ldots, k_l$ such that
$(k_i,\pi(k_i))\in M_1\times N_2$ for $i=1,\ldots,l-1$,
$(k_l,\pi(k_l))\in M_2\times N_2$ or $k_l$ is free, and
$a_{k_i\pi(k_{i-1})}>t$ for all $i=1,\ldots,l$;
\item[{\rm(ii)}] There is a tighter diagram $(\Tilde{B}^{\leq t},\pi)$.
\end{itemize}
\end{lemma}
\begin{proof}[Proof (see Figures~\ref{f:lifting} and~\ref{f:tightlifting})]
If we have $a_{i\pi(k_0)}\leq t$ for all
$i$, then the column index $\pi(k_0)$ can be added to $M_1$ and the
resulting diagram $(\Tilde{B}^{\leq t},\pi)$ is tighter (i.e., has a
greater tightness) than $(B^{\leq t},\pi)$, since the size of
$\Tilde{B}^{\leq t}$ increased while the number of intersections
with $\pi$ is the same. Otherwise we can select $k'_1\in M_1$ with
$a_{k'_1\pi(k_0)}>t$
%If $\pi(k'_1)\in N_1$ or
%$k'_1$ is free, then we are done, otherwise we
and proceed as in the following general description (with the
sequence $k_0, k'_1$).

In general, suppose that we have found a sequence of rows $k_0,
k'_1,\ldots, k'_l$ where $k_0\in M_2$, $k'_1,\ldots, k'_l\in M_1$
and $\pi(k_0),\pi(k'_1),\ldots,\pi(k'_l)\in N_2$ with the property
(*) in the proof of Lemma~\ref{l:sinking}.

\if{(*) For each $s\colon 1\leq s\leq l$ there is a subsequence
$k_0,k_1\ldots k_r$ of $k_0,k'_1,\ldots, k'_s$ such that $k_r=k'_s$
and $a_{k_i\pi(k_{i-1}}>t$ for all $i=1,\ldots,r$.}\fi

If $\pi(k'_l)$ is in $N_1$ or is free then we are done. Otherwise
consider the submatrix extracted from the columns of $N_1$ and
$\pi(k_0),\pi(k'_1),\ldots,\pi(k'_l)$, and all rows of $M_1$ except
for $k'_1,\ldots, k'_l$. If this submatrix does not contain any
entries greater than $t$ then it can be taken for $\Tilde{B}^{\leq
t}$ and the diagram $(\Tilde{B}^{\leq t},\pi)$ is tighter than
$(B^{\leq t},\pi)$ since the sum of dimensions increases by one but
the number of intersections of $\pi$ with $\Tilde{B}^{\leq t}$ is
the same. Otherwise we choose
$k'_{l+1}\notin\{k_0,k'_1,\ldots,k'_l\}$ in $M_2$ in such a way that
$a_{k'_{l+1}\pi(i)}>t$ for some $i$ in $\{k_0,k'_1,\ldots,k'_l\}$.
Then the sequence $k_0,k'_1,\ldots, k'_l,k'_{l+1}$ satisfies the
property (*), and the process is continued until the intersection of
$\pi$ with $M_1\times N_2$ is exhausted and we end up either with a
free $k_l$, or such that $(k_l,\pi(k_l))\in M_1\times N_1$.
\end{proof}

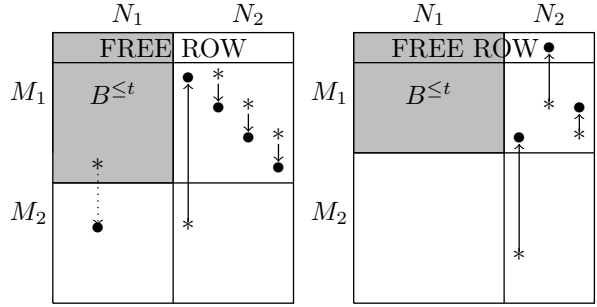
\begin{figure}[h]
\begin{tikzpicture}[scale=.4]
%first picture
\draw [fill=lightgray] (0,4)--(4,4)--(4,9)--(0,9)--(0,5);

\draw [line width = 0.5] (0,0)--(8,0)--(8,9)--(0,9)--(0,0); \draw
[line width = 0.5] (0,8)--(8,8); \draw [line width = 0.5]
(0,4)--(8,4); \draw [line width = 0.5] (4,0)--(4,9);

\node at (2,7) {$B^{\leq t}$}; \node at (4,8.5) {FREE ROW}; \node at
(-.8,7) {$M_1$}; \node at (-.8,3) {$M_2$}; \node at (2.5,9.6)
{$N_1$}; \node at (6.5,9.6) {$N_2$};

\node at (1.5,4.5) {*};

%\draw [dotted,line width = 1,->] (7.4, 4.5)--(1.6,4.5);

\draw [dotted,line width = .5,->] (1.5, 4.3)--(1.5,2.6);
\node at (1.5,2.5) {$\bullet$};

\node at (5.5,7.5) {*};

\draw [line width = 0.5,->] (5.5, 7.3)--(5.5,6.7); \node at
(5.5,6.5) {$\bullet$};

\node at (6.5,6.5) {*};

\draw [line width = 0.5,->] (6.5,6.3)--(6.5,5.7); \node at (6.5,5.5)
{$\bullet$};

\node at (7.5,5.5) {*};

\draw [line width = 0.5,->] (7.5,5.3)--(7.5,4.7); \node at (7.5,4.5)
{$\bullet$};

\node at (4.5,2.5) {*};

\draw [line width = 0.5,->] (4.5, 2.7)--(4.5,7.3); \node at
(4.5,7.5) {$\bullet$};

%second picture
\draw [fill=lightgray] (10,5)--(15,5)--(15,9)--(10,9)--(10,5);

\draw [line width = 0.5] (10,0)--(18,0)--(18,9)--(10,9)--(10,0);
\draw [line width = 0.5] (10,8)--(18,8); \draw [line width = 0.5]
(10,5)--(18,5); \draw [line width = 0.5] (15,0)--(15,9);

\node at (12.5,7) {$B^{\leq t}$}; \node at (13.7,8.5) {FREE ROW};
\node at (9.2,7) {$M_1$}; \node at (9.2,3) {$M_2$}; \node at
(12.5,9.6) {$N_1$}; \node at (16.5,9.6) {$N_2$};

\node at (16.5,6.5) {*};

\draw [line width = 0.5,->] (16.5,6.7)--(16.5,8.3); \node at
(16.5,8.5) {$\bullet$};

\node at (17.5,5.5) {*};

\draw [line width = 0.5,->] (17.5,5.7)--(17.5,6.3); \node at
(17.5,6.5) {$\bullet$};

\node at (15.5,1.5) {*};

\draw [line width = 0.5,->] (15.5,1.7)--(15.5,5.3); \node at
(15.5,5.5) {$\bullet$};

\end{tikzpicture}
\caption{Possible outcomes of lifting (the free row could also
belong to $M_2$).\label{f:lifting}}
\end{figure}

\begin{figure}[h]
\begin{tikzpicture}[scale=.4]
\draw [fill=lightgray] (0,4)--(4,4)--(4,9)--(0,9)--(0,4);

\draw [line width = 0.5] (0,0)--(8,0)--(8,9)--(0,9)--(0,0);
%\draw [line width = 1] (0,8)--(8,8);
\draw [line width = 0.5] (0,4)--(8,4); \draw [line width = 0.5]
(4,0)--(4,9);

\draw [line width = 0.5] (0,6)--(5,6)--(5,9); \draw [line width =
0.5] (7,9)--(7,6)--(8,6);

\draw (0,7)--(2,9);
\draw (0,6)--(3,9);
\draw (1,6)--(4,9);
\draw (2,6)--(5,9);
\draw (3,6)--(5,8);
\draw (0,8)--(1,9);
\draw (7,8)--(8,9);
\draw (7,7)--(8,8);
\draw (7,6)--(8,7);
\draw (4,6)--(5,7);

\draw [line width = 0.5] (0,4)--(5,4)--(5,5)--(0,5); \draw [line
width = 0.5] (7,4)--(7,5)--(8,5); \draw (0,4)--(1,5); \draw
(1,4)--(2,5); \draw (2,4)--(3,5); \draw (3,4)--(4,5); \draw
(4,4)--(5,5); \draw (7,4)--(8,5);

\node at (3.2,5.5) {$B^{\leq t}$};
\node at (2.5,7.5) {${\tilde B}^{\leq t}$};
%\node at (4,8.5) {FREE ROW};
\node at (-.8,7) {$M_1$}; \node at (-.8,3) {$M_2$}; \node at
(2.5,9.6) {$N_1$}; \node at (6.5,9.6) {$N_2$};

\node at (6.5,6.5) {*};
\node at (7.5,5.5) {*};

\node at (4.5,2.5) {*};
\node at (5.5,7.5) {*};
\end{tikzpicture}
\caption{A tighter diagram arising when the lifting
stops.\label{f:tightlifting}}
\end{figure}
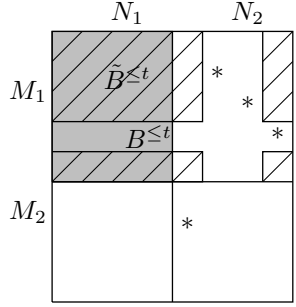

We mainly need to show the following.

\begin{lemma}[Diagram Improvement]
\label{l:improve}
 If $(B^{\leq t},\pi)$ is not tight, then there is a tighter
diagram $(\Tilde{B}^{\leq t},\Tilde{\pi})$.
\end{lemma}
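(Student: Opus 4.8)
The plan is to prove Lemma~\ref{l:improve} (Diagram Improvement) by combining the Sinking and Lifting lemmas that precede it, together with an auxiliary combinatorial argument that actually produces a tighter diagram from the alternating sequences they yield. Since both Lemma~\ref{l:sinking} and Lemma~\ref{l:lifting} already hand us, in their alternative (ii), a tight or tighter diagram, the only work remaining is to handle their alternative (i): the case where sinking (resp.\ lifting) terminates in a free row or in an $M_2\times N_2$ (resp.\ $M_1\times N_1$) cell, producing an alternating sequence of strictly-above-$t$ entries rather than a new block of $\le t$ entries. So first I would dispose of the easy case: if either Lemma yields its alternative (ii) directly, we are done, and $(\Tilde B^{\le t},\Tilde\pi)$ is the sought tighter diagram.

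The substance of the proof lies in the case where \emph{both} processes end in their alternative (i). Here the idea is to use the two alternating sequences to \textbf{rewire the permutation} $\pi$ into a new permutation $\Tilde\pi$ while keeping the same block $B^{\le t}$ (or a suitably modified one), so as to reduce the number $r$ of intersections of the permutation with the block $A_{M_1 N_1}$. Concretely, I would start from a cell $(k_0,\pi(k_0))\in M_1\times N_1$ lying inside $B^{\le t}$, apply Sinking to obtain a sequence $k_0,\dots,k_l$ of rows with $a_{k_i\pi(k_{i-1})}>t$, ending either in a free row or in $M_2\times N_2$. Along this sequence I reassign $\Tilde\pi(k_i):=\pi(k_{i-1})$ for $i=1,\dots,l$, which is legitimate precisely because each such entry exceeds $t$; this ``shifts'' the matched columns so that the intersection point that was inside $B^{\le t}$ is pushed out, decreasing $r$ by one while, by the accounting in \eqref{lequalto}, leaving $m_1,n_1$ unchanged. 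The termination conditions of the sequence guarantee that either the free index gets absorbed or the path lands in $M_2\times N_2$, which is exactly what is needed for $\Tilde\pi$ to remain a valid permutation on $\{1,\ldots,d+1\}\bez\{f\}$ with all $a_{i\Tilde\pi(i)}\ge t$.

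After the rewiring I would verify that $(B^{\le t},\Tilde\pi)$ is indeed a \emph{diagram} in the sense defined before Figure~\ref{f:tight}: that $\Tilde\pi$ is still a permutation with the free-index property and that $a_{i\Tilde\pi(i)}\ge t$ for all non-free $i$. This follows because each reassigned entry is $>t$, and the remaining entries are untouched. Then I would recompute tightness $m_1+n_1-d-1-r$ using \eqref{lequalto}: since $m_1,n_1,d$ are fixed and $r$ has strictly decreased, the new diagram is strictly tighter. If the resulting diagram is still not tight, one simply iterates; because $r$ is a nonnegative integer bounded below, the process terminates, and in fact a single application already yields a tighter diagram, which is all the Lemma asserts.

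\textbf{The main obstacle} I anticipate is bookkeeping: confirming that the rewired $\Tilde\pi$ remains a genuine permutation (no column is matched twice, the free index is correctly handled) and that $r$ drops by exactly one. The alternating sequences from Sinking and Lifting are designed precisely so that consecutive rows hand off their matched columns in a chain, so the rewiring is an augmenting-path-style swap; the delicate point is checking that whichever of the two terminal cases occurs (free row absorbed versus landing in the opposite corner block $M_2\times N_2$ or $M_1\times N_1$) each produces a consistent new matching without creating or destroying an intersection with $B^{\le t}$ beyond the intended one. This is the step where Figures~\ref{f:sinking}--\ref{f:tightlifting} do the heavy lifting, and I would lean on them to make the index-shuffling transparent rather than grinding through a purely symbolic verification.
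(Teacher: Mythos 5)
Your overall strategy---treat alternative (ii) of Sinking/Lifting as the easy exit, and otherwise rewire $\pi$ along the alternating sequence by setting $\Tilde{\pi}(k_i):=\pi(k_{i-1})$ so that the count of intersections with $B^{\leq t}$ drops---is exactly the augmenting-path idea of the paper's proof. But there is a genuine gap in the middle step. You assert that if the sinking sequence terminates at a cell $(k_l,\pi(k_l))\in M_2\times N_2$ (rather than at the free row), the shift $\Tilde{\pi}(k_i):=\pi(k_{i-1})$, $i=1,\dots,l$, still yields a valid diagram. It does not: after the shift the column $\pi(k_l)$ is assigned to no row, and $k_0$ becomes free while the original free row $f$ is still free, so you are left with two free rows and an uncovered column. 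The shift is legitimate only when the trajectory ends at the free row, so that $f$ absorbs $\pi(k_{l-1})$ and $k_0$ becomes the unique new free row. When sinking lands in $M_2\times N_2$ one must continue the trajectory by applying Lifting from that cell, and in general alternate Sinking and Lifting several times.

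This forces a second issue your proposal leaves open: termination of the alternation. The paper resolves it by observing that the composed trajectory either eventually reaches the free row, or revisits a row index already on the trajectory; in the latter case one extracts a cyclic subtrajectory with no repeated intermediate indices and rewires $\pi$ along the cycle, which preserves the permutation property automatically because every column released by a row on the cycle is picked up by the next row on the cycle. In both cases the number of intersections of $\Tilde{\pi}$ with $B^{\leq t}$ strictly decreases (by the number of full turns of the trajectory), giving a tighter diagram. Without the cycle case your iteration has no termination guarantee, and without the alternation the rewiring is not even well defined in the $M_2\times N_2$ outcome; these are the substantive points of the argument, not bookkeeping that the figures can absorb.
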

\begin{proof}
By contradiction, suppose that a tighter diagram does not exist.
Then, Lemma~\ref{l:sinking} yields a sequence
$k_{l_0},k_{l_0+1}\ldots, k_{m_0}$, where $l_0=0$,
 $(k_{l_0},\pi(k_{l_0}))\in
M_1\times N_1$, $(k_{m_0},\pi(k_{m_0}))\in M_2\times N_2$ or
$k_{m_0}$ is free, $(k_s,\pi(k_s))\in M_2\times N_1$ for all
$s=l_0,\ldots,m_0-1$, and $a_{k_s\pi(k_{s-1})}>t$ for all
$s=l_0+1,\ldots,m_0$.

If $k_{m_0}$ is free then we define $\Tilde{\pi}$ by
$\Tilde{\pi}(k_s):=\pi(k_{s-1})$ for $s=l_0+1,\ldots,m_0$. The row
$k_{l_0}$ becomes free, and for all the remaining indices $i$ we
define $\Tilde{\pi}(i):=\pi(i)$. We see that the number of
intersections of $\Tilde{\pi}$ with $B^{\leq t}$ is one less than
that of $\pi$ with $B^{\leq t}$, hence $(B^{\leq t},\Tilde{\pi})$ is
tighter.

Otherwise, Lemma~\ref{l:lifting} yields a sequence
$k_{m_0},k_{m_0+1}\ldots, k_{l_1}$, where
 $(k_{m_0},\pi(k_{m_0}))\in
M_2\times N_2$, $(k_{l_1},\pi(k_{l_1})\in M_1\times N_1$ or
$k_{l_1}$ is free, $(k_s,\pi(k_s))\in M_1\times N_2$ for all
$s=m_0,\ldots,l_1-1$, and $a_{k_s\pi(k_{s-1})}>t$ for all
$s=m_0+1,\ldots,l_1$.

If $k_{l_1}$ is free, then the diagram can be made tighter as above,
replacing $m_0$ with $l_1$ in the definition of $\Tilde{\pi}$.

The composition of sinking and lifting, or if any of these
procedures end up with a free row index, will be called a {\em
(full) turn} of the {\em trajectory}.

The sinking and lifting procedures are then applied again and again,
until either one of the following holds.

a) On some turn, let it be turn number $(s+1)$, we encounter a row
index $k_{l_s+t}, t\ge 1,$ which is already in the trajectory, written as
$k_{l_r+t'}$ (with $r<s$ or $t'=0$ and $r\leq s$). In this case we
make a cyclic trajectory $k_{l_s},k_{l_s+1},\ldots, k_{l_s+t},
k_{l_r+t'+1},\ldots, k_{l_s}$
where no two intermediate indices are repeated.

b) There are no repetitions %and the procedure does not immediately
%yield a tighter diagram,
but we meet a free row index in the end.

In both cases, let $p$ be the length of the trajectory, and rename
the indices of the resulting cyclic trajectory without repetitions,
or the resulting acyclic trajectory ending with the free row index,
to $l_0,l_1,\ldots,l_p$. Clearly, for any two adjacent indices $l_s$
and $l_{s+1}$ of this trajectory, we have $a_{l_{s+1}\pi(l_s)}>t$,
and either $(l_{s+1},\pi(l_s))\in M_1\times N_2$, or
$(l_{s+1},\pi(l_s))\in M_2\times N_1$. This shows that defining
$\Tilde{\pi}$ by $\Tilde{\pi}(l_s)=\pi(l_{s-1})$ for $s=1,\ldots,p$,
setting $l_p$ as the new free row in case b), and defining
$\Tilde{\pi}(i):=\pi(i)$ for all the remaining row indices, we
obtain a tighter diagram $(B^{\leq t},\Tilde{\pi})$, since the number
of intersections of $\Tilde{\pi}$ with $B^{\leq t}$ strictly
decreases, by the number of full turns made by the trajectory. Thus
the diagram $(B^{\leq t},\pi)$ can be made tighter in any case.
\end{proof}

\begin{theorem}
\label{t:konig-ext} Let $A\in\mmset^{(d+1)\times d}$ and let $t$ be
the greatest number $h$ such that $A^{(h)}$ has a $d\times d$
submatrix with nonzero permanent. Then for this value $t$ there is a
tight diagram $(B^{\leq t},\pi)$, such that all entries of $B^{\leq
t}$ are not greater than $t$, and all entries of $\pi$ are not
smaller than $t$.
\end{theorem}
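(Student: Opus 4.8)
The plan is to start from an arbitrary König diagram---whose existence was already secured by the König-theorem argument at the beginning of this section---and then to push its tightness up to its maximal possible value $0$ by repeatedly invoking the Diagram Improvement lemma (Lemma~\ref{l:improve}). First I would record the base case: a diagram $(B^{\leq t},\pi)$ satisfying both sign conditions exists. Indeed, choosing $h>t$ below the smallest entry of $A$ exceeding $t$, the matrix $A^{(h)}$ has no $d\times d$ submatrix of nonzero permanent, so its bipartite graph has a maximal matching of size $<d$; a minimal König vertex cover $(M_2,N_2)$ with $m_2+n_2<d$ yields the complementary sets $M_1,N_1$ with $m_1+n_1>d+1$ and $A_{M_1N_1}\leq t$ entrywise, which we take as $B^{\leq t}$. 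The permutation $\pi$ with free index $f$ furnished by the very definition of $t$ satisfies $a_{i\pi(i)}\geq t$ for all $i\neq f$. Thus the two inequalities demanded in the statement---all entries of $B^{\leq t}$ not exceeding $t$, all entries met by $\pi$ not smaller than $t$---already hold for this initial diagram.

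Next I would observe that the tightness $m_1+n_1-d-1-r$ is a \emph{non-positive} integer. This is read off directly from \eqref{lequalto}: the tightness equals $-s$ when $f\in M_1$ and $-1-s$ otherwise, with $s\geq 0$ in either case. Consequently $0$ is the maximum value the tightness can take, and the tight diagrams are exactly those attaining it.

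I would then iterate Lemma~\ref{l:improve}. As long as the current diagram is not tight, the lemma replaces it by a strictly tighter diagram $(\tilde B^{\leq t},\tilde\pi)$. Since the tightness is an integer bounded above by $0$ and strictly increases at every step, the iteration must terminate after finitely many steps, and it can terminate only at a diagram admitting no further improvement, i.e.\ a tight one. It remains to see that both sign conditions survive the iteration. Every rerouting carried out by the sinking and lifting procedures (Lemmas~\ref{l:sinking} and~\ref{l:lifting}), and hence by Lemma~\ref{l:improve}, reassigns $\tilde\pi(l_s):=\pi(l_{s-1})$ only along trajectory edges satisfying $a_{l_s\pi(l_{s-1})}>t$, while leaving $\pi$ unchanged on all other indices; so every entry met by $\tilde\pi$ stays $\geq t$. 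Likewise, whenever one of these lemmas produces a new block $\tilde B^{\leq t}$ it is extracted from entries that are $\leq t$ by the very stopping condition of the procedure. Hence the tight diagram obtained at termination still obeys the claimed inequalities, which finishes the proof.

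I expect no genuinely new obstacle at this stage, since all the combinatorial work has been absorbed into Lemma~\ref{l:improve}. The only point requiring care is the termination argument: one must verify both that the tightness is honestly bounded above (by $0$, via \eqref{lequalto}) and that it is strictly monotone under improvement, so that the repeated application cannot cycle or run indefinitely; the preservation of the two sign conditions is then a routine bookkeeping check on the trajectory construction.
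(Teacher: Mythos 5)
Your proposal is correct and follows essentially the same route as the paper: the paper's proof simply takes the (not necessarily tight) diagram furnished by the K\"{o}nig-theorem discussion at the start of the section and applies Lemma~\ref{l:improve} repeatedly until a tight diagram is reached. Your additional verifications --- that the tightness is a non-positive integer bounded above by $0$ (read off from~\eqref{lequalto}), that it strictly increases under improvement so the iteration terminates, and that the two sign conditions are preserved by the rerouting --- are exactly the implicit bookkeeping the paper leaves to the reader.
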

\begin{proof}
The K\"{o}nig theorem (by the discussion in the beginning of this
section) yields a diagram $(B^{\leq t},\pi)$ which is not
necessarily tight. However, a tight diagram can be obtained from it
by repeated application of Lemma~\ref{l:improve}.
\end{proof}

\begin{proof}[Proof of Theorem~\ref{t:intsep}]
We will prove the following claim {\bf by induction}:\\
If $A\in\mmset^{(d+1)\times d}$ (with finite entries) contains a
permutation $\pi$ such that $a_{i\pi(i)}\geq t$ for all $i$ (except
for $i$ being the free row $f$), then there is a point $z$ with all
coordinates not less than $t$, which internally separates the rows
of $A$.

The case $d=1$ is the {\bf basis} of induction. In this case $A$
consists of just two numbers, say $x$ and $y$, and we can take
$z=\max(x,y)$ as the ``separating point''. Then one of the numbers
belongs to the sector $\{s\mid s\leq z\}$, and the remaining one to
$\{s\mid s\geq z\}$.

We now assume that the claim holds for all $d<n$, and let
$A\in\mmset^{(n+1)\times n}$ have only finite entries. By
Theorem~\ref{t:konig-ext}, there is a permutation $\pi$, a free
index $f$ such that $a_{i\pi(i)}\geq t$ for all $i\neq f$, and a
submatrix $B^{\leq t}:=A_{M_1N_1}$ with $a_{ij}\leq t$ for $i\in
M_1,j\in N_1$ such that the diagram $(B^{\leq t},\pi)$ is tight. Let
$M_2$ and $N_2$ be the complements of $M_1$ in $\{1,\ldots,n+1\}$
and of $N_1$ in $\{1,\ldots,n\}$, respectively. As the diagram is
tight, for each column with an index in $N_2$ the corresponding
entry of $\pi$ is in $A_{M_1N_2}$. Let $\Tilde{M}_1$ be the set of
rows consisting of the free row (which belongs to $M_1$ since the
diagram is tight), and the rows of $M_1$ such that $\pi(i)\in N_2$,
see Figure~\ref{f:tight}. Then the number of elements in $N_2$ is
one less than that of $\Tilde{M}_1$, and the matrix
$A_{\Tilde{M}_1N_2}$ contains a permutation $\pi'$ induced by $\pi$,
with all entries not smaller than $t$. Let $n'$ be the number of
elements in $N_2$, so $n'<n$. By the induction hypothesis there
exists an $n'$-component vector $z$ internally separating the rows
of $A_{\Tilde{M}_1N_2}$.

Define $x$ by $x_i=z_i$ for $i\in N_2$ and $x_i=t$ for $i\in N_1$.
We claim that $x$ is the separating point. Since the diagram is
tight, we have $\pi(i)\in N_1$ for all $i\in M_2$, and we also have
$\pi(i)\in N_1$ for all $i\in M_1\bez\Tilde{M}_1$ by the definition
of $\Tilde{M_1}$. This implies that $x$ satisfies $a_{i\pi(i)}\geq
t$ for all $i\notin\Tilde{M_1}$, determining the sectors in which
the rows with these indices lie. The sectors for the rows with
indices in $\Tilde{M}_1$ are determined by $z$ (i.e., by induction),
also using that $a_{ij}\leq t$ for all $i\in\Tilde{M}_1$ and $j\in
N_1$.
\end{proof}

\section{An application of topological Radon theorem}\label{s:radon-helly}

In this section we go beyond the max-min semiring considering what
we call the {\em max-T} semiring $\Tmax$: this is the unit interval
$\mmset=[0,1]$ equipped with the tropical addition $a\oplus
b:=\max(a,b)$ and multiplication $\otimes_T$ played by a $T$-norm
$T\colon \mmset\times\mmset\to\mmset$. These operations were
introduced in \cite{sklar} and a standard reference is the monograph
\cite{pap}.

\begin{definition} A triangular norm (briefly $T$-norm) is a binary operation $T$ on the unit interval $[0, 1]$ which
is %commutative,
associative, monotone and has $1$ as neutral element, i.e., it is a function
$T : [0,1]^2\to [0,1]$ such that for all $x,y,z\in [0,1]$:
\begin{enumerate}
%\item [(T1)] $T(x,y)=T(y,x)$,
\item [(T1)] $T(x,T(y,z))=T(T(x,y),z)$,
\item [(T2)] $T(x,y)\le T(x,z)$ and $T(y,x)\le T(z,x)$ whenever $y\le z$,
\item [(T3)] $T(x,1)=T(1,x)=x$.
\end{enumerate}

A $T$-norm is continuous if for all convergent sequences $(x_n)_n, (y_n)_n\in [0,1]^{\mathbb{N}}$ we have
$$
\lim_{n\to \infty}T(x_n,y_n)=T(\lim_{n\to \infty} x_n, \lim_{n\to \infty} y_n).
$$
\end{definition}

\begin{remark}
The axioms of semiring also require $0$ to be absorbing with
respect to multiplication, that is, $T(x,0)=T(0,x)=0$.
Note that this law follows from (T2,T3) and since $1$
is the greatest element.
\end{remark}

The multiplication $\otimes_T$ can be any of the continuous T-norms known in the fuzzy sets theory, including the usual multiplication, $\otimes=\min$ which we studied above, and the {\L}ukasiewicz T-norm $a\otimes_{\text{\L}} b:=\max(0,a+b-1)$.

Note that the case of
usual multiplication yields a part of the max-times semiring, isomorphic to the
non-positive part of the tropical/max-plus semiring.

Below we consider $\mmset^d$, the set of $d$-vectors with components in $\mmset$,  equipped
with the componentwise tropical addition and T-multiplication by scalars.
A set $C\subseteq\mmset^d$ is called {\em max-T convex} if, together with any $x,y\in C$, it contains all combinations $\lambda\otimes_T x\oplus\mu\otimes_T y$ where $\lambda\oplus\mu=1$.

For any set  $X\subseteq\mmset^d$, the {\em max-T convex hull}
of $X$ is defined as the smallest max-T convex set containing $X$. Using the axioms of
semiring, or 1)-4) above, it can be shown that the max-T convex hull of
$X$ is the set of all {\em max-T convex combinations}
$$
\bigoplus_{i=1}^m \lambda_i\otimes_T x^i\colon m\geq 1,\ \bigoplus_{i=1}^m \lambda_i=1,
$$
of all $m$-tuples of elements $x^1,\ldots,x^m\in X$. The max-T convex hull of a finite set of
points is also called a {\em max-T convex polytope}.

\if{
We introduce some standard terminology. If $x^0,x^1,\dots,x^k$ are points in $\R^m$ such that $\{x^1-x^0, x^2-x^0, \dots, x^k-x^0\}$ is
a linearly independent set in $\R^m$, then we say that these points
are \emph{affinely independent}. Let $0\le k\le m$, and $x^0, x^1, \dots , x^k$ be affinely
independent points in $R^m$. The $k$-simplex $\Delta = (x^0, x^1, \dots , x^k)$ is the
following subset of $\R^m$:
\begin{equation*}
\Delta=\left \{x\in \R^m\vert x=\sum_{i=0}^k\mu_ix^i, \sum_{i=0}^k \mu_i=1, 0\le \mu_i\le 1\right \}.
\end{equation*}

Since the points $x^0, x^1,\dots , x^k$ are affinely independent, the reals $\mu_i,0\le i\le k,$ are uniquely determined by $x$. The points $x^0,\dots, x^k$ are the \emph{vertices of $\Delta$}, and $k$ is the \emph{dimension} of $\Delta$. A simplex $\Delta_1$ is a \emph{face} (\emph{proper face}) of a simplex $\Delta_2$ if the vertex-set of $\Delta_1$ is
a subset (proper subset) of the vertex-set of $\Delta_2$.

%Note that there exists a continuous and bijective correspondence between the simplex $(x^0, x^1, \dots , x^k)$

By the unit simplex of dimension $k$ we mean the set
\begin{equation*}
\Delta_0^k=\left \{(\mu_0,\mu_1,\dots,\mu_k)\in \R^{k+1}\vert \sum_{i=0}^k \mu_i=1, 0\le \mu_i\le 1\right \},
\end{equation*}
is a $k$-simplex whose extremal vertices are the standard basis vectors in $\R^{d+1}$.
}\fi

We further make use of the following theorem of general topology that can be found in \cite{barany}. By the unit simplex of dimension $d$ we mean the set
\begin{equation*}
\Delta_d=\left \{(\mu_0,\mu_1,\dots,\mu_d)\in \R^{d+1}\vert \sum_{i=0}^d \mu_i=1, 0\le \mu_i\le 1\right \},
\end{equation*}
in the usual real space $\R^{d+1}$ and with the usual arithmetics.

\begin{theorem}[Topological Radon's theorem] If $f$ is any continuous function from
$\Delta_{d+1}$ to a $d$-dimensional linear space, then $\Delta_{d+1}$ has two disjoint faces whose images under $f$ are not disjoint.
\end{theorem}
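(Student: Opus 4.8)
The plan is to prove the Topological Radon Theorem as a direct consequence of the Borsuk--Ulam theorem, which is the standard route for this statement. The unit simplex $\Delta_0^{d+1}$ has $d+2$ vertices, say $v_0,\ldots,v_{d+1}$, and its dimension is $d+1$. The key structural fact I would exploit is that the boundary of $\Delta_0^{d+1}$, together with its face structure, carries an antipodal symmetry once we pass to an appropriate auxiliary space. More precisely, I would first set up a $\mathbb{Z}_2$-action whose orbit space encodes ``pairs of disjoint faces,'' and then obstruct the existence of an equivariant map to a sphere of too-small dimension.

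First I would reduce the statement to a contradiction hypothesis: suppose $f\colon\Delta_0^{d+1}\to\R^d$ is continuous and that $f(F_1)\cap f(F_2)=\emptyset$ for every pair of disjoint faces $F_1,F_2$. Next I would introduce the \emph{deleted join} of the simplex with itself. The combinatorial object here is the join $\Delta_0^{d+1}*\Delta_0^{d+1}$, from which one deletes the ``diagonal'' pairs; the resulting deleted join of a simplex on $d+2$ vertices is known to be $\mathbb{Z}_2$-homeomorphic to the sphere $S^{d+1}$, with the $\mathbb{Z}_2$-action given by swapping the two join factors (this is the antipodal action on $S^{d+1}$). A point of this deleted join is a formal convex combination $t\,x\oplus(1-t)\,y$ with $x$ in a face $F_1$, $y$ in a disjoint face $F_2$, and the involution sends it to $(1-t)\,y\oplus t\,x$ with the roles reversed.

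Then I would build from $f$ an odd (antipodal) map. Define $g$ on the deleted join by $g\bigl(t\,x\oplus(1-t)\,y\bigr) = t\,f(x)-(1-t)\,f(y)$, landing in $\R^d$, and append the scalar coordinate $2t-1$ to land in $\R^{d+1}$. One checks that $g$ is continuous and that swapping the factors negates $g$, i.e. $g$ is odd. Under the contradiction hypothesis, $g$ never vanishes: the $\R^d$-coordinate can be zero only when $t\,f(x)=(1-t)\,f(y)$, and combined with the separation assumption on disjoint faces this forces $t=1-t$ and $f(x)=f(y)$ on disjoint faces, which is excluded, while the last coordinate $2t-1$ vanishes precisely when $t=1/2$; the two vanishing conditions cannot hold simultaneously. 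Normalising $g/\lVert g\rVert$ therefore yields a continuous odd map $S^{d+1}\to S^{d}$, contradicting the Borsuk--Ulam theorem.

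The main obstacle I expect is the clean identification of the deleted join of $\Delta_0^{d+1}$ with the antipodal sphere $S^{d+1}$, together with a careful bookkeeping of which ``disjoint faces'' correspond to which join points, so that the non-vanishing of $g$ genuinely uses the full separation hypothesis (and not merely separation of vertices). Getting the boundary cases right --- degenerate faces, the parameter $t\in\{0,1\}$, and the interplay between the real coordinate $2t-1$ and the $\R^d$-coordinate --- is where the argument must be handled with care. If one prefers to avoid the deleted-join machinery, an alternative is to cite Borsuk--Ulam directly after an explicit piecewise-linear parametrisation of the disjoint-face configuration space, but the join formulation is the most transparent and is the version underlying the reference \cite{barany}.
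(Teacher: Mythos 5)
The paper does not actually prove this statement: it is quoted as a known result of ``general topology'' and attributed to the reference by Bajm\'oczy and B\'ar\'any, so there is no internal proof to compare against. Your argument is the standard (and correct) route, and it is in substance the same Borsuk--Ulam argument that underlies the cited source. The key facts you invoke are all right: the simplex $\Delta_0^{d+1}$ has $d+2$ vertices, its two-fold deleted join is the join of $d+2$ copies of $S^0$ and hence $\mathbb{Z}_2$-homeomorphic to $S^{d+1}$ with the factor-swap acting antipodally; the map $g(t\,x\oplus(1-t)\,y)=\bigl(2t-1,\;t f(x)-(1-t)f(y)\bigr)$ is well defined and continuous (at $t\in\{0,1\}$ the irrelevant join coordinate does not enter), and it is odd. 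The only place your write-up wobbles is the non-vanishing step: the separation hypothesis does not ``force $t=1-t$''; rather, one should argue that if $g$ vanished then the last coordinate gives $t=1/2$, whereupon the first $d$ coordinates give $f(x)=f(y)$ with $x,y$ in disjoint faces, contradicting the hypothesis. With that ordering of the implications the proof closes: a nowhere-zero odd map $S^{d+1}\to\R^{d+1}$ normalises to an odd map $S^{d+1}\to S^d$, which Borsuk--Ulam forbids. So the proposal is correct as a sketch; it supplies a proof where the paper only supplies a citation, and it agrees with the approach of the work being cited.
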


\begin{theorem}[Radon's theorem for max-T] Let $X$ be a set of $d + 2$ points in $\mmset^d$. Then there are two pairwise disjoint subsets $X^1$ and $X^2$ of $X$ whose max-T convex hulls have a common point.
\end{theorem}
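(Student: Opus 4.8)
The plan is to deduce the max-T Radon theorem from the Topological Radon theorem by constructing a suitable continuous map from the unit simplex $\Delta_{d+1}$ into the $d$-dimensional space $\R^d$. Label the $d+2$ points of $X$ as $x^0,x^1,\ldots,x^{d+1}\in\mmset^d$. The natural candidate is the map $f\colon\Delta_{d+1}\to\R^d$ defined by sending barycentric coordinates $(\mu_0,\ldots,\mu_{d+1})$ (with $\sum_i\mu_i=1$, $0\le\mu_i\le1$) to the max-T combination
\begin{equation*}
f(\mu_0,\ldots,\mu_{d+1})=\bigoplus_{i=0}^{d+1}\mu_i\otimes_T x^i,
\end{equation*}
which indeed lands in $\mmset^d\subseteq\R^d$. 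The first thing to check is that $f$ is continuous: this follows from the continuity of the T-norm (assumed in the definition) together with the continuity of $\max$, applied componentwise.

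Once continuity is established, I would invoke the Topological Radon theorem to obtain two disjoint faces $F_1,F_2$ of $\Delta_{d+1}$ whose images intersect, say $f(u)=f(v)$ for some $u\in F_1$ and $v\in F_2$. Each face $F_j$ of $\Delta_{d+1}$ corresponds to the set of barycentric coordinates supported on a vertex subset; let $S_1,S_2\subseteq\{0,1,\ldots,d+1\}$ be these index sets. Because $F_1$ and $F_2$ are disjoint faces, the sets $S_1$ and $S_2$ are disjoint, so the point sets $X^1=\{x^i\mid i\in S_1\}$ and $X^2=\{x^i\mid i\in S_2\}$ are disjoint subsets of $X$. The key point is then that a barycentric coordinate vector $u$ supported on $S_1$ produces, via $f$, exactly a max-T convex combination of the points in $X^1$, hence $f(u)$ lies in the max-T convex hull of $X^1$; similarly $f(v)$ lies in the max-T convex hull of $X^2$. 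The common value $f(u)=f(v)$ is therefore a point in both hulls, which is precisely the desired conclusion.

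The main obstacle, or at least the point requiring genuine care, is matching the normalization conventions between the two convexity notions. In the Topological Radon theorem the simplex uses ordinary coordinates summing to $1$, whereas a max-T convex combination requires $\bigoplus_i\lambda_i=1$, i.e.\ $\max_i\lambda_i=1$. These are not the same constraint, so I cannot directly identify barycentric coordinates with max-T coefficients. The fix is to note that for a point $u$ in a face $F_1$ with support $S_1$, the coordinates $(\mu_i)_{i\in S_1}$ are nonnegative but their \emph{ordinary} sum is $1$; to read $f(u)$ as a genuine max-T combination of $X^1$ one must observe that replacing each coefficient appropriately (or rescaling so that the maximum coefficient equals $1$) does not change membership in the max-T convex hull, because the hull is exactly the set of all combinations $\bigoplus\lambda_i\otimes_T x^i$ with $\bigoplus\lambda_i=1$ and that set is monotone/absorbing in a way that lets the $\max$ of the coefficients be normalized to $1$. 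I would therefore insert a short lemma-style remark verifying that $f(u)$ (built from simplex coordinates) indeed lies in the max-T convex hull of the points indexed by the support of $u$; this is where the T-norm axioms (T2) and (T3) and the absorbing property of $0$ get used, and it is the only step where the abstract topological input must be reconciled with the algebraic definition of the hull.
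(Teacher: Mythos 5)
Your overall strategy---push the topological Radon theorem through a continuous ``evaluation'' map into $\mmset^d$---is the same as the paper's, and you correctly identify the normalization mismatch as the crux. But your resolution of that mismatch does not work, and the gap is real. If $u$ lies in the face of the ordinary simplex supported on $S_1$, its coordinates satisfy $\sum_{i\in S_1}\mu_i=1$, so $\max_{i\in S_1}\mu_i$ can be as small as $1/|S_1|$. The value $f(u)=\bigoplus_{i\in S_1}\mu_i\otimes_T x^i$ is then in general \emph{not} in the max-T convex hull of $X^1$, and no after-the-fact rescaling of the coefficients saves this, because rescaling changes the value of the combination. Concretely, in the max-min case with $d=1$, take $x^0=x^1=0.9$ and $\mu_0=\mu_1=1/2$: then $f(u)=\max(\min(0.5,0.9),\min(0.5,0.9))=0.5$, while the max-min convex hull of $\{x^0,x^1\}$ is the single point $\{0.9\}$. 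More generally, writing $c=\max_i\mu_i$, one gets $f(u)=c\otimes_T w$ for some $w$ in the hull, and $c\otimes_T w$ lies in the hull only in special circumstances. So the assertion you defer to a ``lemma-style remark'' is false as stated, and with it the claim that the images of the two disjoint faces land in the two hulls.

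The fix is not to adjust the coefficients of $f(u)$ but to change the parametrization of the simplex before evaluating. This is what the paper does: it introduces the ``max-simplex'' $\Delta_d^{\max}=\{\mu\in[0,1]^{d+2}\mid\max_i\mu_i=1\}$, on which the evaluation map $\phi_2(\mu)=\bigoplus_i\mu_i\otimes_T x^i$ genuinely sends the sub-max-simplex supported on $S$ into the max-T hull of $\{x^i\mid i\in S\}$, and then transports everything to the ordinary simplex via the support-preserving homeomorphism $\phi_1(\mu)=\mu/\sum_i\mu_i$ (normalization by the ordinary sum). The composite $f=\phi_2\circ\phi_1^{-1}$ is continuous, maps faces into the correct hulls, and the topological Radon theorem then gives the conclusion exactly as you intended. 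Your argument becomes correct once you insert this reparametrization step in place of the rescaling remark.
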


\begin{proof} Let $X=\{x^0,x^1,\dots,x^{d+1}\}\subseteq \Tmax^d$. We construct a continuous map $f$ from $\Delta_{d+1}$ to the max-T convex hull of $X$ that maps the faces of $\Delta_{d+1}$ into max-T convex hulls of subsets of $X$ and apply topological Radon's theorem to $f$.
Define
\begin{equation*}
\Delta_{d+1}^{\max}=\left \{ (\mu_0,\mu_1,\dots,\mu_{d+1})\in [0,1]^{d+2}\vert \max\{\mu_i, 0\le i\le d+1\}=1\right\}.
\end{equation*}

Using ordinary arithmetics, consider the map $\phi_1:\Delta_{d+1}^{\max}\to \Delta_{d+1}$ given by:
\begin{equation*}
\phi_1(\mu_0,\mu_1,\dots,\mu_{d+2})=\left ( \frac{\mu_0}{\sum_{i=0}^{d+1}\mu_i},  \frac{\mu_1}{\sum_{i=0}^{d+1}\mu_i}, \dots,  \frac{\mu_{d+1}}{\sum_{i=0}^{d+1}\mu_i} \right ),
\end{equation*}
which is clearly a homeomorphism, and thus has a continuous inverse. Moreover, for any subset of indices $I=\{i_1,i_2, \dots, i_k\}\subseteq \{0,1,2,\dots,d+1 \}$, $\phi_1$ maps the max-T convex hull of the standard vectors $e^{i_1}, \dots, e^{i_k}$ into the face of the simplex $\Delta_{d+1}$ determined by the vertices $e^{i_1}, \dots, e^{i_k}$.

Consider also the map $\phi_2$ defined on $\Delta_{d+1}^{\max}$ with values in $\mmset^d$ given  by:
\begin{equation*}
\phi_2(\mu_0,\mu_1,\dots,\mu_{d+1})=\max(\mu_0\otimes x^0,\mu_1\otimes x^1,\dots,\mu_{d+1}\otimes x^{d+1}),
\end{equation*}
which for any subset of indices $I$ as above takes the max-T convex hull of the standard vectors $e^{i_1}, \dots, e^{i_k}$ into the max-T convex hull of the vectors $x^{i_1}, \dots, x^{i_k}$.

Define now $f=\phi_2\circ \phi_1^{-1}$ on $\Delta_{d+1}$ with values in $\mmset^d$. Applying
to it the topological Radon theorem we get the claim.
\end{proof}

\begin{remark} It is of interest to find a purely combinatorial proof of max-min Radon's theorem, or in the case of other known T-norms.
\end{remark}

The following theorem is known more generally in abstract convexity, as
a consequence of Radon's theorem.

\begin{theorem}[Helly's theorem] Let $F$ be a finite collection of max-T convex sets in $\mmset^d$. If every $d + 1$ members of $F$ have a nonempty intersection, then the whole collection have a nonempty intersection.
\end{theorem}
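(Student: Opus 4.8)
The plan is to derive Helly's theorem from the max-T Radon theorem by the classical induction of abstract convexity, the induction running on the number $n$ of sets in the collection $F=\{C_1,\ldots,C_n\}$. I assume $n\ge d+1$, since Helly is vacuous or ill-posed for smaller collections; the base case $n=d+1$ is then exactly the hypothesis, as the whole collection is a single family of $d+1$ sets which meet by assumption. So I would suppose $n\ge d+2$ and that the statement already holds for every collection of $n-1$ max-T convex sets enjoying the $(d+1)$-wise intersection property.

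For the inductive step I would first manufacture a supply of points. For each index $j\in\{1,\ldots,n\}$, consider the subcollection $\{C_i\mid i\ne j\}$: it consists of $n-1\ge d+1$ max-T convex sets, and any $d+1$ of them are in particular $d+1$ members of $F$ and hence meet. By the induction hypothesis there is a point $p_j\in\bigcap_{i\ne j}C_i$, which yields $n$ points $p_1,\ldots,p_n$. I would then select any $d+2$ of them, say $p_1,\ldots,p_{d+2}$, regard them as a set $X\subseteq\mmset^d$ of $d+2$ points, and invoke Radon's theorem for max-T: there is a partition $\{1,\ldots,d+2\}=I_1\sqcup I_2$ into disjoint index sets whose corresponding max-T convex hulls share a common point $p$, i.e. $p\in\conv\{p_j\mid j\in I_1\}\cap\conv\{p_j\mid j\in I_2\}$.

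The final step is to verify that $p\in C_i$ for every $i\in\{1,\ldots,n\}$. Fix $i$. If $i\in\{1,\ldots,d+2\}$ it lies in exactly one of the blocks $I_1,I_2$; if $i>d+2$ it lies in neither. In all cases I may choose a block $J\in\{I_1,I_2\}$ with $i\notin J$. Then for every $j\in J$ we have $i\ne j$, so $p_j\in\bigcap_{k\ne j}C_k\subseteq C_i$, and therefore the generators $\{p_j\mid j\in J\}$ all lie in $C_i$. Since $C_i$ is max-T convex and the max-T convex hull is by definition the smallest max-T convex set containing its generators, we get $\conv\{p_j\mid j\in J\}\subseteq C_i$, whence $p\in C_i$. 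As $i$ was arbitrary, $p\in\bigcap_{i=1}^n C_i$, which completes the induction.

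The argument is essentially formal and rests on only two facts about max-T convexity: the max-T Radon theorem proved above, and the closure property that a max-T convex set contains the hull of any of its subsets (immediate from the definition of the hull as a smallest max-T convex set). I therefore expect no genuine obstacle beyond bookkeeping. The one point deserving care is the use of Radon: its statement produces disjoint subsets of the \emph{point} set $X$, but the $p_j$ need not be distinct, so I would keep the Radon partition at the level of the index set $\{1,\ldots,d+2\}$ rather than of the points, exactly as above. Note also that the step consumes only $d+2$ of the $n$ available points, so nothing stronger than the stated $(d+2)$-point max-T Radon theorem is ever required.
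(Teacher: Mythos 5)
Your proof is correct and follows essentially the same route as the paper: both derive Helly from the max-T Radon theorem by induction on the number of sets, using points $p_j\in\bigcap_{i\neq j}C_i$ and the fact that a max-T convex set contains the hull of any subset of itself. The only (inessential) difference is in the organization of the induction step --- the paper applies Radon once in the base case $n=d+2$ and then shrinks larger families by replacing $C^{n-1},C^n$ with $C^{n-1}\cap C^n$, whereas you re-apply Radon at every level; the paper also disposes of the case of coincident points $p_i=p_j$ directly (such a point already lies in all the sets), which is a slightly cleaner way to handle the degeneracy you flag at the end.
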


\begin{proof} Let $C^1, \dots, C^n$ be max-T convex sets in $\mmset^d$ and suppose that whenever $d+1$ sets among them
are selected, they have a nonempty intersection. We proceed by induction on $n$. First assume that
$n = d + 2$. Define $x^i$ to be a point in the set $\cap^{d+2}_{j=1; j\not =i}C_j$. We have then $d + 2$ points $x^1, \dots, x^{d+2}$. If two of them
are equal, then this point is in the whole intersection. Hence, we can assume that all the $x^i$ are different.
By the Radon theorem, we have two disjoint subsets $S$ and $T$ partitioning $\{1,\dots,d + 2\}$ such that there
is a point $x$ in $\conv(\cup_{i\in S}x^i)\cap \conv(\cup_{i\in T}x^i)$. This point $x$ belongs to every $C^i$.
Indeed, take $j\in\{1,\dots,d + 2\}$, which is either in $S$ or in $T$. Suppose without loss of generality that $j\in S$.
Then, $\conv(\cup_{i\in T}x^i)$ is included in $C^j$ , and so $x \in C^j$ . The case $n = d + 2$ is proved.

Suppose now that $n > d+2$ and that the theorem is proved up to $n-1$. Define $C'^{n-1} := C^{n-1}\cap C^n$. When $d + 2$ convex sets $C^i$ are selected, they have a nonempty intersection, according to what we have just proved. Hence, every $d+1$ members of the collection $C^1, \dots, C^{n-2}, C^{n-1}$ have a nonempty intersection.

By induction, the whole collection has a nonempty intersection.
\end{proof}

The following two theorems are also known more generally in
abstract convexity, as a consequences of Helly's theorem.

\begin{theorem}[Centerpoint theorem] Let $P$ be a collection of $n$ points in $\mmset^d$. Then there exists a point $p\in \mmset^d$
(the \emph{centerpoint}) such that every max-T convex set containing
more than $dn/(d+1)$ points of $P$ also contains $p$.
\end{theorem}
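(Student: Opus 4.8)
The plan is to deduce the Centerpoint theorem from Helly's theorem (which has already been established for the max-T setting) by the standard abstract-convexity argument, adapted so that every step uses only max-T convexity rather than ordinary linear convexity. First I would set up the following collection of sets. For each subset $Q\subseteq P$ with $|Q|>\frac{dn}{d+1}$ (equivalently, each $Q$ whose complement has fewer than $\frac{n}{d+1}$ points), consider the max-T convex hull $\conv(Q)$. Let $F$ denote the family of all such hulls. The goal is to show that $\bigcap_{C\in F} C\neq\emptyset$, since any point in this intersection will by definition be contained in every max-T convex set that contains more than $\frac{dn}{d+1}$ points of $P$ (such a set contains the corresponding $Q$, hence contains $\conv(Q)$).

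To invoke Helly's theorem, I would verify the hypothesis: every $d+1$ members of $F$ intersect. So take hulls $\conv(Q_0),\dots,\conv(Q_d)$ with each $|Q_j|>\frac{dn}{d+1}$. The key counting step is that the union of the complements $P\setminus Q_j$ has size strictly less than $(d+1)\cdot\frac{n}{d+1}=n$, so there remains at least one point $p\in P$ lying in all of $Q_0,\dots,Q_d$ simultaneously; this $p$ lies in every $\conv(Q_j)$, giving the required common point. Having checked that every $d+1$ sets in $F$ meet, Helly's theorem applies directly (note each $\conv(Q)$ is max-T convex by construction, and $F$ is finite since $P$ is finite) and yields a point $p^*\in\bigcap_{C\in F}C$, which is the desired centerpoint.

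The main obstacle — and the only place I expect to need care — is the counting inequality that produces the common point of any $d+1$ members of $F$. The strict inequality $|P\setminus Q_j|<\frac{n}{d+1}$ must be handled correctly to guarantee that summing $d+1$ such complements stays strictly below $n$; this is exactly what forces the intersection $Q_0\cap\cdots\cap Q_d$ to be nonempty rather than merely making the bound tight. One should phrase the threshold consistently (``more than $\frac{dn}{d+1}$ points'' meaning ``complement strictly smaller than $\frac{n}{d+1}$'') so that the pigeonhole argument is valid even when $\frac{n}{d+1}$ is not an integer. Beyond this, the argument is purely set-theoretic and combinatorial: it never uses any special structure of the max-T semiring other than the fact, already available from the preceding results, that max-T convex hulls are max-T convex sets and that Helly's theorem holds for finite families of such sets. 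Hence no separate convexity computation in $\mmset^d$ is required, and the proof reduces to assembling Helly's theorem with this pigeonhole count.
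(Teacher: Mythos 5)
Your proposal is correct and follows essentially the same route as the paper: reduce to a family of max-T convex hulls of large subsets of $P$, verify the Helly hypothesis by the pigeonhole count on the complements $|P\setminus Q_j|<\frac{n}{d+1}$, and invoke the finite Helly theorem. If anything, your version is slightly tidier than the paper's, since by restricting to hulls of subsets $Q\subseteq P$ you work with a manifestly finite family (to which the finite Helly theorem applies directly) and then pass to arbitrary convex sets containing more than $\frac{dn}{d+1}$ points of $P$ via $\conv(Q)\subseteq C$.
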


\begin{proof} First construct all max-T convex polytopes containing more then $dn/(d+1)$ points in $P$.
Any point lying in all such polytopes is the required point.
Consider a $(d + 1)$-tuple of such polytopes. The complement of each
polytope in the tuple contains less then $n/(d+1)$ points from
$P$. The union of all $(d+1)$ complements of the polytopes in the
tuple contains less then $n$ points from $P$. Thus the complement of
the union, which is the intersection of all polytopes, is nonempty.
We only have to prove that given a set of convex polytopes such that
every $(d + 1)$-tuple has a non-empty intersection, all of them have
a non-empty intersection. But this is Helly's theorem.
\end{proof}

As $\mmset^d=[0,1]^d$ is endowed with the usual Euclidean topology
we observe that a max-T convex set is compact if and only if it is
closed.

\begin{theorem}[Helly's theorem for infinite collections of convex sets]\label{helly-infinite} Suppose $F$ is an infinite, possibly uncountable family
of max-T convex and compact sets in $\mmset^d$. Suppose that every $d + 1$ of them have a nonempty intersection. Then the whole family has a non-empty intersection.
\end{theorem}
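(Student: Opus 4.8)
The plan is to reduce the infinite case to the finite Helly theorem just proved and then to exploit the compactness of the ambient space $\mmset^d=[0,1]^d$. The organizing principle is the standard topological fact that in a compact space a family of closed sets with the \emph{finite intersection property} has nonempty total intersection; the only real work is to verify this property for $F$, and this is precisely what the finite Helly theorem supplies.

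First I would record two basic facts. The space $\mmset^d=[0,1]^d$ is compact in the Euclidean topology, and, as observed immediately before the statement, each member of $F$ is a compact max-T convex set and hence closed. Next I would establish the finite intersection property for $F$. Given an arbitrary finite subfamily $F'\subseteq F$, I would enlarge it — using that $F$ is infinite — to a subfamily $F''\subseteq F$ consisting of at least $d+1$ sets. Since every member of $F''$ belongs to $F$, every $d+1$ of them have nonempty intersection, so the finite Helly theorem yields $\bigcap F''\neq\emptyset$. Because $F'\subseteq F''$ forces $\bigcap F''\subseteq\bigcap F'$, it follows that $\bigcap F'\neq\emptyset$. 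Thus every finite subfamily of $F$ has nonempty intersection.

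Finally I would invoke compactness directly: the members of $F$ form a family of closed subsets of the compact space $\mmset^d$ enjoying the finite intersection property, and therefore $\bigcap F\neq\emptyset$, which is the desired conclusion. I do not expect a serious obstacle here; the only point demanding a moment's care is that the finite Helly theorem presupposes a collection large enough to contain a $(d+1)$-subfamily, so one must dispose of finite subfamilies of fewer than $d+1$ sets by the enlargement step above — and it is exactly there that the infinitude of $F$ is used. Everything else is the routine passage from the finite intersection property to a nonempty intersection in a compact space.
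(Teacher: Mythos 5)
Your proposal is correct and follows essentially the same route as the paper: both reduce to the finite Helly theorem to get the finite intersection property and then conclude by compactness (the paper unwinds the finite-intersection-property argument explicitly, covering a fixed compact member $K$ by the open complements and extracting a finite subcover, which is the same reasoning you invoke abstractly). Your explicit handling of finite subfamilies with fewer than $d+1$ members, by enlarging them using the infinitude of $F$, is a small point the paper glosses over but does not change the argument.
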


\begin{proof} Let $F=\{B_i\}_{i\in I}$. According to Helly's theorem, every finite collection of $B_i$'s has a nonempty intersection. Fix a member $K$ of $F$ and define $G_i=\complement B_i$, $i\in I$. Assume that no point of $K$ belongs to all $B_i$. Then the family $\{G_i\}_{ i\in I}$ form an open cover for the the compact set $K$. One can find a finite subcover $G_{i_1},\dots,G_{i_l}$ such that $K\subseteq G_{i_1}\cup \dots \cup G_{i_l}$. But this means $K\cap B_{i_1}\cap \dots \cup B_{i_l}=\emptyset$, a contradiction.
\end{proof}

Let us conclude this section with Tverberg's theorem for max-T,
which can be derived from the more general topological version.

\begin{conjecture}[Topological Tverberg's theorem] If $f$ is any continuous function from
$\Delta_{(d+1)(r-1)}$ to a $d$-dimensional linear space, then $\Delta_{(d+1)(r-1)}$ has $r$ disjoint faces whose images under $f$ contain a common point.
\end{conjecture}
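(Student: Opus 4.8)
The plan is to follow the configuration-space/test-map scheme that already underlies the topological Radon theorem quoted above (which is precisely the case $r=2$), replacing the antipodal Borsuk--Ulam argument by an equivariant one for a cyclic group. Write $N=(d+1)(r-1)$, so that $\Delta_N$ has $N+1$ vertices, and argue by contradiction: assume $f\colon\Delta_N\to\R^d$ admits no $r$ pairwise disjoint faces whose images share a common point, and manufacture an equivariant map that cannot exist. The argument will go through cleanly when $r$ is a prime, can then be pushed to prime powers, and I expect the genuine difficulty to lie exactly at the step where $r$ is not a prime power.

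First I would form the $r$-fold $2$-wise deleted join $(\Delta_N)^{*r}_\Delta$, whose points are formal combinations $\lambda_1 x_1\oplus\cdots\oplus\lambda_r x_r$ with the $x_i$ lying in pairwise disjoint faces, $\lambda_i\ge0$ and $\sum_i\lambda_i=1$. A standard identification shows this deleted join is combinatorially the $(N+1)$-fold join $[r]^{*(N+1)}$ of $r$-point discrete sets; hence it is an $N$-dimensional, $(N-1)$-connected complex carrying a free action of the symmetric group $S_r$ permuting the $r$ copies. Next I would build the test map $F\colon(\Delta_N)^{*r}_\Delta\to(\R^d)^{*r}$ by $F(\sum_i\lambda_i x_i)=\sum_i\lambda_i f(x_i)$, which is $S_r$-equivariant. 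A Tverberg partition for $f$ is exactly a point of the source sent by $F$ into the diagonal $D$ of equal $\lambda_i$ and equal $f(x_i)$; so the assumption that no such partition exists means that $F$ lands in $(\R^d)^{*r}\setminus D$. Identifying $(\R^d)^{*r}$ with the $S_r$-representation $\R^r\otimes\R^{d+1}$ and $D$ with its trivial summand, the complement $S_r$-deformation retracts onto the unit sphere $S(W_r^{\oplus(d+1)})$ of $(d+1)$ copies of the standard representation $W_r=\{x\in\R^r:\sum_i x_i=0\}$, a sphere of dimension $(r-1)(d+1)-1=N-1$.

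This produces an $S_r$-equivariant map from an $(N-1)$-connected free $S_r$-complex to an $(N-1)$-sphere on which $S_r$ acts. For $r=p$ prime I would restrict the action to $\mathbb{Z}/p$, under which both the source and the sphere $S(W_p^{\oplus(d+1)})$ are free; Dold's theorem then forbids an equivariant map from an $(N-1)$-connected free $\mathbb{Z}/p$-space to a free $\mathbb{Z}/p$-space of dimension $N-1$, yielding the required contradiction. For prime powers $r=p^k$ the same conclusion follows from the {\"O}zaydin--Volovikov equivariant argument, using elementary abelian $p$-groups in place of $\mathbb{Z}/p$. The main obstacle is that this exhausts the strength of the method: when $r$ has at least two distinct prime divisors the relevant equivariant obstruction vanishes, the test-map scheme cannot be closed, and such configurations in fact admit genuine counterexamples. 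I would therefore expect to establish the statement exactly for prime powers $r$, and to flag the unrestricted form as the delicate point.
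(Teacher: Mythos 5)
The paper does not prove this statement: it is deliberately labelled a \emph{conjecture}, and immediately after it the authors only record that the topological Tverberg theorem is known for $r$ prime \cite{BSS-81} and for $r$ a prime power \cite{Vol-96}. Your sketch is precisely the configuration-space/test-map argument underlying those two references --- the identification of the $2$-wise deleted join $(\Delta_N)^{*r}_{\Delta}$ with $[r]^{*(N+1)}$ (an $(N-1)$-connected free complex of dimension $N=(d+1)(r-1)$), the equivariant test map into $(\R^d)^{*r}$ whose avoidance of the diagonal retracts onto the $(N-1)$-sphere $S(W_r^{\oplus(d+1)})$, Dold's theorem for $r=p$, and the \"Ozaydin--Volovikov refinement via elementary abelian $p$-groups for $r=p^k$. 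So for prime powers your argument is sound and matches the route the paper implicitly relies on.

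The one substantive point to make explicit is that the statement \emph{as literally written}, for arbitrary $r$, cannot be proved by any method: you correctly note that the equivariant obstruction vanishes when $r$ has two distinct prime factors (\"Ozaydin), and it is now known --- by work of Mabillard--Wagner together with Frick's reduction --- that genuine counterexamples to the topological Tverberg conjecture exist for every $r$ that is not a prime power, already in dimension roughly $3r$. Hence the honest outcome of your proposal is exactly what you flag at the end: a proof of the prime-power case, consistent with the citations the paper gives, together with the observation that the unrestricted conjecture is false. There is no gap in your reasoning for the cases you claim; the only caution is that one should not present the argument as a proof of the conjecture in the generality in which the paper states it.
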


\begin{conjecture}[Tverberg's theorem for max-T]
Let $X$ be a set of $(d+1)(r-1) + 1$ points in
$\mmset^d$. Then there are $r$ disjoint subsets $X^1,\ldots,X^r$
of $X$ whose max-T convex hulls have a common point.
\end{conjecture}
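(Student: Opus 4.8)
The plan is to mimic exactly the proof of Radon's theorem for max-T given above, with the topological Radon theorem replaced by its Tverberg analogue. Write $N=(d+1)(r-1)$, so that $X=\{x^0,x^1,\dots,x^{N}\}\subseteq\Tmax^d$ has $N+1$ points, and note that the unit simplex $\Delta_N$ has exactly $N+1$ vertices $e^0,\dots,e^N$, one for each point of $X$. First I would construct a continuous map $f\colon\Delta_N\to\mmset^d$ with the single structural property that, for every index set $I\subseteq\{0,\dots,N\}$, the face of $\Delta_N$ spanned by $\{e^i\mid i\in I\}$ is carried into the max-T convex hull of $\{x^i\mid i\in I\}$. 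Once such an $f$ is in hand, the topological Tverberg theorem applied to $f$ delivers the conclusion directly.

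Concretely, set
\begin{equation*}
\Delta_N^{\max}=\left\{(\mu_0,\dots,\mu_N)\in[0,1]^{N+1}\mid \max_{0\le i\le N}\mu_i=1\right\},
\end{equation*}
and define $\phi_1\colon\Delta_N^{\max}\to\Delta_N$ by the ordinary sum-normalization $\phi_1(\mu)=\bigl(\mu_0/\textstyle\sum_{i}\mu_i,\dots,\mu_N/\sum_{i}\mu_i\bigr)$, which is a homeomorphism with continuous inverse, and $\phi_2\colon\Delta_N^{\max}\to\mmset^d$ by $\phi_2(\mu)=\bigoplus_{i=0}^N \mu_i\otimes_T x^i$, continuous because the $T$-norm is continuous. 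Exactly as in the Radon case, $\phi_1$ sends the max-T convex hull of $\{e^i\mid i\in I\}$ onto the face of $\Delta_N$ indexed by $I$, while $\phi_2$ sends it onto $\conv(\{x^i\mid i\in I\})$. Hence $f:=\phi_2\circ\phi_1^{-1}\colon\Delta_N=\Delta_{(d+1)(r-1)}\to\mmset^d\subseteq\R^d$ is a continuous map into a $d$-dimensional linear space enjoying the required face-to-hull correspondence.

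Now I would invoke the topological Tverberg theorem for this $f$: it yields $r$ pairwise disjoint faces $F_1,\dots,F_r$ of $\Delta_N$ whose images under $f$ contain a common point $x$. Setting $X^j=\{x^i\mid e^i\in F_j\}$ for $j=1,\dots,r$, disjointness of the faces gives pairwise disjoint subsets of $X$, and the correspondence gives $x\in f(F_j)\subseteq\conv(X^j)$ for each $j$. Thus $x$ lies in the common intersection of the max-T convex hulls $\conv(X^1),\dots,\conv(X^r)$, which is precisely the assertion.

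The bookkeeping verifications (that $\phi_1$ is a homeomorphism, that each $\phi_i$ respects the face/hull correspondence, and that continuity of the $T$-norm makes $\phi_2$ continuous) are word-for-word those of the Radon case and present no real difficulty. The genuine obstacle lies entirely in the topological input rather than in this reduction: the topological Tverberg theorem is stated above only as a conjecture, being established in important special cases (for instance when $r$ is prime, by Bárány--Shlosman--Szűcs) but open in general. Consequently the reduction proves the max-T Tverberg statement exactly to the extent that its topological counterpart is known, and a purely combinatorial max-T argument—of the kind sought in the Remark following Radon's theorem—would be required to obtain an unconditional result.
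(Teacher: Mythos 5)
Your reduction is exactly the paper's: the authors state the max-T Tverberg result as a conjecture and derive it from the topological Tverberg theorem ``by the above argument,'' meaning precisely the map $f=\phi_2\circ\phi_1^{-1}$ from the Radon proof, and they likewise note that the conclusion is unconditional only when the topological input is known (namely $r$ a prime or a prime power). Your proposal is correct and matches the paper's approach, including the honest caveat about the conditional nature of the result.
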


It is known that the topological Tverberg's theorem is true for $d\geq 1$ and $r$ equal to a prime number~\cite{BSS-81}, and moreover for $d\geq 1$ and $r$ equal to a power of a prime~\cite{Vol-96}. By the above argument, it also shows Tverberg's theorem in max-T for these cases.


\begin{thebibliography}{10}

\bibitem{barany} E.~G.~Bajm\'oczy and I. B\' ar\' any.  A common generalization of Borsuk's and Radon's theorem. Acta Mathematica Hungarica \textbf{34} (1979) 347--350.

\bibitem{BSS-81} I. B\' ar\' any, S.~B.~Shlosman and A.~Sz\"{u}ks.  On a topological
generalization of a theorem of Tverberg. J. Lond. Math. Soc. \textbf{23} (1981) 158--161.

\bibitem{BCS-87} P,~Butkovi\v{c}, K.~Cechl\'arov\'a and P.~Szabo. Strong linear independence in bottleneck algebra.
Linear Algebra Appl., \textbf{94} (1987) 133-155.

%\bibitem{Cec-92}
%K.~Cechl\'arov\'a. Eigenvectors in bottleneck algebra. Linear Algebra Appl., \textbf{175} (1992) 63--73.

\bibitem{CGQS-05}
G.~Cohen, S.~Gaubert, J.P. Quadrat, and I.~Singer. Max-plus convex sets and functions. In G.~Litvinov and V.~Maslov, editors, {\em Idempotent Mathematics
and Mathematical Physics}, volume 377 of {\em Contemporary Mathematics}, pages 105--129. AMS, Providence, 2005.
\newblock E-print arXiv:math/0308166.

\bibitem{develin-etc} M.~Develin, F.~Santos, B.~Sturmfels. On the rank of a tropical matrix. In "Discrete and Computational
Geometry" (E. Goodman, J. Pach and E. Welzl, eds), MSRI Publications, Cambridge Univ. Press, 2005, 213--242.

\bibitem{Gav-01}
M.~Gavalec. Solvability and unique solvability of max-min fuzzy
equations. Fuzzy Sets and Systems \textbf{124} (2001) 385-393.

\bibitem{Gav:04}
M.~Gavalec. Periodicity in Extremal Algebra. Gaudeamus, Hradec Kr\'alov\'e, 2004.

\bibitem{G-Meu}
S.~Gaubert and F. Meunier.
Carath\'eodory, Helly and the Others in the Max-Plus World.
Discrete and Computational Geometry, \textbf{43}, (2010) 648--662.

%\bibitem{helly} E. Helly. \" Uber Systeme abgeschlossener Megen mig gemeinschaftlichen
%Punkten. Monatsch. Math. \textbf{37} (1930), 281--302.

\bibitem{EJN} J.~Eskeldson, M.Jaffe, V. Nitica. A metric on max-min algebra, Contemporary Mathematics, this volume, AMS, Providence.

\bibitem{pap} E.P.~Klement, R.~Mesiar, E.~Pap, Triangular Norms, Kluwer Academic Publishers, Dordrecht, 2000.

\bibitem{LMS-01} G.~L.~Litvinov, V.~P.~Maslov, G.~B.~Shpiz, Idempotent functional analysis: An Algebraic Approach,
Math Notes \textbf{69} (2001), 758--797.

\bibitem{Nit-09}
V.~Nitica. The structure of max-min hyperplanes. Linear Algebra Appl. \textbf{432} (2010), 402–-429.

\bibitem{N-Ser1}
V.~Nitica and S.~Sergeev. On semispaces and hyperplanes in max-min convex geometry. Kybernetika \textbf{46} (2010), 548--557.

\bibitem{N-Ser2}
V.~Nitica and S.~Sergeev. An interval version of separation by semispaces in max-min convexity. Linear Algebra Appl. \textbf{435} (2011), 1637–-1648.

\bibitem{NS-1} V.~Nitica and I.~Singer. Max-plus convex sets and max-plus semispaces I. Optimization \textbf{56} (2007) 171--205.

\bibitem{NS-08I}
V.~Nitica and I.~Singer. Contributions to max-min convex geometry. I. Segments. Linear Algebra Appl. \textbf{428} (2008), 1439--1459.

\bibitem{NS-08II}
V.~Nitica and I.~Singer. Contributions to max-min convex geometry. II. Semispaces and convex sets.
Linear Algebra Appl. \textbf{428} (2008), 2085--2115.

\bibitem{sklar} B.~Schweizer and A.~Sklar, Probabilistic Metric Spaces, North-Holland, New York, 1983

\bibitem{Ser-03}
S.~N.~Sergeev. Algorithmic complexity of a problem of idempotent convex geometry. Math. Notes (Moscow), \textbf{74} (2003), 848--852.

\bibitem{Vol-96}
A.~Yu.~Volovikov. On a topological generalization of the Tverberg theorem.
Math. Notes (Moscow), \textbf{59} (1996), 324-326.

\bibitem{Zim-77}
K.~Zimmermann. A general separation theorem in extremal algebras. Ekonom.-Mat. Obzor (Prague), \textbf{13} (1977), 179--201.

\bibitem{Zim-81} K.~Zimmermann. Convexity in semimodules. Ekonom.-Mat. Obzor (Prague), \textbf{17} (1981), 199--213.


\end{thebibliography}
\end{document}